\newtheorem{theorem}{Theorem}[section]
\newtheorem{lemma}[theorem]{Lemma}
\newtheorem{proposition}[theorem]{Proposition}
\newtheorem{corollary}[theorem]{Corollary}
\theoremstyle{definition}
\newtheorem{definition}[theorem]{Definition}
\numberwithin{equation}{section}
\newtheorem{remark}[theorem]{Remark}
\title[Satake's Good basic invariants]{Satake's good basic invariants for finite complex reflection groups}
\author{Yukiko Konishi}
\address{ 
Department of Mathematics, 
College of Liberal Arts,
Tsuda University,
Tokyo 187-8577, Japan 
}
\email{konishi@tsuda.ac.jp}
\author{Satoshi Minabe}
\address{Department of Mathematics, 
Tokyo Denki University, Tokyo 120-8551,  Japan}
\email{minabe@mail.dendai.ac.jp}
\keywords{Frobenius manifolds, Invariant theory, Coxeter groups, Complex reflection groups}
\subjclass[2020]{Primary 53D45; Secondary 20F55}
\begin{document}
\maketitle
\begin{abstract}
In  \cite{Satake2020} Satake introduced
the notions of admissible triplets and good basic invariants 
for  finite complex reflection groups.
 For irreducible finite Coxeter groups,
he showed the existence and the uniqueness of 
good basic invariants.
Moreover he showed that good basic invariants are flat in the sense of K.\,Saito's flat structure. He also obtained a formula for the multiplication of the Frobenius structure.
In this article, we generalize his  results to  finite complex  reflection groups.
We first study  the existence and the uniqueness of 
good basic invariants.
Then for duality groups, we show that good basic invariants are flat 
in the sense of the natural Saito structure constructed in \cite{KMS2018}.
We also give a formula for the potential vector fields of the multiplication in terms of 
the good basic invariants. Moreover, in the case of irreducible finite Coxeter groups, 
we derive a formula for the potential functions of the associated Frobenius manifolds.  
\end{abstract}

\section{Introduction}
In 1980, Saito--Sekiguchi--Yano found   ``a flat generator system'' on the orbit
spaces of  irreducible finite Coxeter groups\footnote{We call a finite group
generated by reflections  acting on a real vector space $V_{\mathbb{R}}$
a finite Coxeter group. By extending the action to  $V=V_{\mathbb{R}}\otimes \mathbb{C}$, 
an irreducible finite Coxeter group can be regarded as a finite complex reflection group.}
\cite{SaitoSekiguchiYano}. See also \cite{Saito1993}.
Later Dubrovin \cite{Dubrovin1993-2} invented a formulation called Frobenius manifold -- sometimes 
called Frobenius structure --
which includes Saito--Sekiguchi--Yano's flat structure. See also \cite{Dubrovin1998}.
Since then the Frobenius structure is constructed for
a wide range of reflection groups.
Recently,  Saito structure without metric   defined in \cite{Sabbah}
is constructed for
duality groups
\cite{KatoManoSekiguchi2015}
\cite{Arsie-Lorenzoni2016} \cite{KMS2018}.
We remind the reader that
a duality group is an irreducible finite complex reflection group satisfying certain  conditions,
and that an irreducible  finite Coxeter  group  is  a duality group.

Let $G$ be a duality group acting on the vector space $V=\mathbb{C}^n$.
The Saito structure without metric for $G$ obtained in \cite{KMS2018} is called 
the natural Saito structure.
Roughly speaking,
it consists of a certain torsion-free, flat connection and the multiplication on the holomorphic tangent  bundle of the orbit space.
It can be characterized as a structure ``dual'' to 
a certain structure given by the trivial  connection on the holomorphic tangent bundle $TV$ of $V$.
 However,  this characterization is somewhat roundabout
 and  a more straightforward characterization  
 is desired. 
The notion of good basic invariants proposed by Satake \cite{Satake2020}  gives a solution to this problem.

In \cite{Satake2020}, Satake  gave a definition of
 an admissible triplet and good basic invariants for finite complex reflection groups. 
 {\it An admissible triplet} consists of a primitive $d_1$-th root of unity $\zeta$,
$\zeta$-regular element $g\in G$, and a regular vector $q$ such that
$gq=\zeta q$.  (Here $d_1$ is the highest degree of $G$.)
{\it A  $g$ homogeneous basis} $z$ is a linear coordinate system of $V$ consisting of eigenvectors of the action of $g$ on $V^*$.
A set of basic invariants is said to be {\it good} with respect to an admissible triplet $(g,\zeta, q)$ if its certain derivatives by $z$ vanish at $q$.
In the case of irreducible  finite Coxeter groups,
Satake showed the existence and the uniqueness of good basic invariants.
He also showed the independence of good basic invariants
of the choice of admissible triplet.
(Precisely speaking,  the vector subspace spanned by good basic invariants
is unique and independent.) 
Moreover, in relation to the Frobenius structure, 
he showed that a set of good basic invariants is a flat generator system
and he obtained  an explicit expression of the structure constants
of the multiplication.

In this article, we study  Satake's good basic invariants 
for finite complex reflection groups.
The article consists of three parts and is organized as follows.

The first part (\S \ref{sec:preliminary}--\S\ref{sec:good}) concerns the goodness of basic invariants.
\S \ref{sec:preliminary} is a preliminary.
In \S \ref{sec:regularelements}, first we recall the definition and properties of regular elements. Then we introduce Satake's definition of admissible triplet.
For finite complex reflection groups, the necessary and sufficient condition for the existence of regular elements is known (Theorem \ref{thm:criteria}).
Using the condition,
we list all irreducible finite complex reflection groups which admit
admissible triplets.
In \S \ref{sec:good}, we first recall Satake's definition and properties of 
a $g$-homogeneous basis,  compatible basic invariants, and 
good basic invariants.
We use 
the name  {\it a $(g,\zeta)$-graded coordinate system} instead of a $g$-homogeneous basis
to specify the choice of $\zeta$.
 (It  sometime happens that a $\zeta$-regular element
 $g$ is also $\zeta'$-regular for another primitive $d_1$-th  root of unity
 and that a $g$-homogeneous basis depends on the choice of $\zeta$.)
Then we show the existence of a set of good basic invariants
and the uniqueness
of its span  $\mathcal{I}$ in the ring of 
$G$-invariant polynomials $S[V]^G$
for a given admissible triplet
(Theorems \ref{existence-good}, \ref{uniqueness-good}).
We  also show that $\mathcal{I}$ is independent of the choice of admissible triplet when the highest degree $d_1$ is isolated
(Proposition \ref{indep}, Theorem \ref{unique}, Remark \ref{ad=2}).

The second part (\S \ref{second-part}--\S \ref{Satake-C}) of this article concerns the relationship with
 the natural Saito structure for duality groups studied in \cite{KMS2018}.
 In \S \ref{second-part},
 we collect necessary definitions and  properties from \cite{KMS2018}. 
 In \S \ref{sec:duality-groups}, we first show that
 good basic invariants are flat in the sense of the natural Saito structure
 (Theorem \ref{good-implies-flat}).
 Then we derive an expression of the structure constants of the multiplication in terms of
 good basic invariants $x$ and $(g,\zeta)$-graded coordinates $z$
 (Theorem \ref{main-theorem2}, Corollary \ref{vector-potential}).
 These results show that the natural Saito structure
for the duality groups is fully reconstructed 
by a set of good basic invariants.  
 In \S \ref{Satake-C}, we study the case  of irreducible finite Coxeter groups. 
 We obtain an expression of the potential function (Theorem \ref{thm:potential})
 in terms of good basic invariants. Hence the Frobenius structure for these groups
is also reconstructed by a set of good basic invariants.  
We also show that our expression for the multiplication agrees with that of Satake's
(Proposition \ref{C-Satake}). 

The third part (\S \ref{sec:examples}--\S \ref{sec:examples2})
contains some examples of admissible triplets, good basic invariants, 
and potential vector fields. 
\S \ref{sec:examples} treats exceptional groups of 
rank $2$.  \S \ref{sec:examples2} treats $G(m, m, n)$
and $G(m,1,n)$ for $m\geq 2$ and $n\leq 4$.

\subsection*{Acknowledgment}
The work of Y.K.  is supported in part by
KAKENHI Kiban-S (16H06337) and KAKENHI Kiban-S (21H04994).
S.M. is supported in part by
KAKENHI Kiban-C (23K03099). 
Y.K. and S.M. thank Ikuo Satake for valuable discussions and comments.
The authors also thank the referee  for helpful comments.

\section{Preliminary I}
\label{sec:preliminary}
\subsection{Notations}
For an integer $i$,  $\mathbb{Z}_{\geq i}$ denotes the set of integers greater than or equal to $i$.
Let $n$ be a positive integer.  
\begin{itemize}
\item For $a=(a_1,\ldots, a_n)\in \mathbb{Z}_{\geq 0}^n$
and $d=(d_1,\ldots, d_n)\in \mathbb{Z}_{\geq 0}^n$, 
$$
|a|=a_1+\cdots+a_n~,\quad a!=a_1! \cdots a_n!~,
\quad 
a\cdot d=a_1d_1+\cdots +a_n d_n~.
$$
\item  For $a=(a_1,\ldots, a_n)\in \mathbb{Z}_{\geq 0}^n$, 
and a set of $n$ variables  or a set of $n$ complex numbers $z=(z^1,\ldots, z^n)$,
$$
z^a=(z^1)^{a_1}\cdots (z^n)^{a_n}~.
$$
\item For $a=(a_1,\ldots, a_n)\in \mathbb{Z}_{\geq 0}^n$, 
and a set of $n$ variables  $z=(z^1,\ldots, z^n)$,
$$
\frac{\partial^a}{\partial z^a}=\left(\frac{\partial}{\partial z^1}\right)^{a_1}
\cdots \left(\frac{\partial}{\partial z^n}\right)^{a_n}~.
$$
\item $\bm{e}_{\alpha}$ ($1\leq \alpha\leq n$) denotes 
$$
\bm{e}_{\alpha}=(0,\ldots, \overbrace{1}^{\text{$\alpha$-th}},\ldots, 0) \in \mathbb{Z}_{\geq 0}^n~.
$$
\end{itemize}

We write Kronecker's  delta as
\begin{equation}\nonumber 
\delta^{\gamma}_{\beta}=\begin{cases}
1&(\beta=\gamma)\\
0&(\beta\neq \gamma)
\end{cases}~.
\end{equation}
(Superscripts and subscripts will be used in accordance with the
covariance/contravariance of tensor components.)

The identity matrix is denoted $I$, and the zero matrix is denoted $O$.

\subsection{Coordinate systems and bases}
Let $n$ be a positive integer and $V=\mathbb{C}^n$. 
The action of 
$g\in GL(V)$ on the ring  
$S[V]$ of polynomial functions  on $V$ is defined by
\begin{equation}\nonumber 
(g^*F)(v)=F(g^{-1}v)\quad (F\in S[V], ~~v\in V)~.
\end{equation}

For a diagonalizable matrix  $g\in GL(V)$,
there exists a basis $q_1,\ldots, q_n$ of $V$ consisting of 
eigenvectors of $g$. 
Then the associated linear coordinate system $z^1,\ldots,  z^n $ of $V$ (i.e. the basis of $V^*$ satisfying $z^{\alpha}(q_{\beta})=\delta^{\alpha}_{\beta}$) is a  basis of $V^*$ consisting of eigenvectors of the $g$-action on $V^*$.  More precisely, if $gq_{\alpha}=\lambda_{\alpha}q_{\alpha}$ ($1\leq \alpha\leq n$), 
\begin{equation}\label{g-action-z}
g^*z^{\alpha}=\lambda_{\alpha}^{-1}z^{\alpha}~.
\end{equation}

\begin{lemma} \label{dFdz}
Let $g\in GL(V)$ be a diagonalizable matrix with eigenvalues
$\lambda_1,\ldots,\lambda_n$ and 
let $z=(z^1,\ldots, z^n)$ be a linear coordinate system of $V$
satisfying  \eqref{g-action-z}.
Then for 
a homogeneous polynomial $F\in S[V]$ satisfying $g^*F=F$ and an eigenvector $q$
of $g$ belonging to the eigenvalue $\lambda_1$, 
the following holds:
$$\text{if}\quad
\frac{\partial^{a} F}{\partial z^{a}}(q)\neq 0  \quad
\text{then}\quad 
\lambda_1^{\mathrm{deg}\,F-|a|}\lambda^{a}=1~,
$$
where $a\in \mathbb{Z}^n_{\geq 0}$.
\end{lemma}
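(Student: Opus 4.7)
The statement is essentially a weight calculation, so my approach is to exploit the equivariance of the iterated partial-derivative operator under $g^*$ and then evaluate at the eigenvector $q$. The key step is to establish the commutation identity
\[
g^* \circ \frac{\partial^a}{\partial z^a} \;=\; \lambda^a \cdot \frac{\partial^a}{\partial z^a} \circ g^*
\]
on $S[V]$, where $\lambda^a = \lambda_1^{a_1}\cdots \lambda_n^{a_n}$. To prove this, I would expand an arbitrary polynomial in the monomial basis $z^b$ and use $g^* z^b = \lambda^{-b} z^b$ (which follows immediately from \eqref{g-action-z}) together with $\partial^a z^b/\partial z^a = (b!/(b-a)!)\, z^{b-a}$ when $b\geq a$, zero otherwise. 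Comparing the two sides is a short direct bookkeeping computation; alternatively one can verify the case $|a|=1$ by the Leibniz rule and iterate.

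Applying this identity to $F$ and using the hypothesis $g^* F = F$ gives $g^* P = \lambda^a P$, where $P := \partial^a F/\partial z^a$. Next I would evaluate both sides at $q$. Since $P$ is homogeneous of degree $\deg F - |a|$ and $g^{-1}q = \lambda_1^{-1} q$, homogeneity yields
\[
(g^* P)(q) \;=\; P(g^{-1}q) \;=\; P(\lambda_1^{-1} q) \;=\; \lambda_1^{-(\deg F - |a|)}\, P(q).
\]
Comparing this with the identity $(g^* P)(q) = \lambda^a P(q)$ obtained in the first step, and using the hypothesis $P(q)\neq 0$ to cancel $P(q)$, I obtain $\lambda^a = \lambda_1^{-(\deg F - |a|)}$, which is the desired relation $\lambda_1^{\deg F - |a|} \lambda^a = 1$.

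There is no serious obstacle: the content of the lemma is simply a weight computation. The only care needed lies in distinguishing the two competing conventions — $g^*$ acts on the coordinate functions $z^\alpha$ with eigenvalues $\lambda_\alpha^{-1}$ (covariant side), while $g$ itself acts on the eigenvector $q$ with eigenvalue $\lambda_1$ (contravariant side) — and in tracking the sign of the exponent of $\lambda_1$ through the homogeneity step so that the minus signs cancel into the final clean expression $\lambda_1^{\deg F -|a|}\lambda^a = 1$.
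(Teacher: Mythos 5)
Your proof is correct, and it takes a genuinely different route from the paper's. The paper expands $F$ in the monomial basis $z^b$, observes that $g^*F=F$ forces $A_b=0$ unless $\lambda^b=1$, and then analyzes which monomials survive the operation $\frac{\partial^a}{\partial z^a}(\cdot)\big|_{q}$; because $q$ need not be a coordinate basis vector when the $\lambda_1$-eigenspace has dimension $k>1$, the paper must split into the cases $k=1$ and $k>1$, handling the latter by a change of basis within the eigenspace and a chain-rule argument. Your argument replaces all of this with the operator identity $g^*\circ\frac{\partial^a}{\partial z^a}=\lambda^a\,\frac{\partial^a}{\partial z^a}\circ g^*$ (which checks out on monomials exactly as you describe), giving $g^*P=\lambda^a P$ for $P=\frac{\partial^a F}{\partial z^a}$, and then evaluates at $q$ using $(g^*P)(q)=P(g^{-1}q)=P(\lambda_1^{-1}q)=\lambda_1^{-(\deg F-|a|)}P(q)$ by homogeneity. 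This is cleaner: it needs no case distinction on $\dim V(g,\lambda_1)$ and no change of basis, since the only facts used about $q$ are that it is an eigenvector and that $P$ is homogeneous. What the paper's more hands-on computation buys is an explicit description of which monomials $z^b$ can appear in $F$ and contribute at $q$ (namely $b=a+k\bm{e}_1$ with $\lambda^b=1$), a form of the statement that is convenient for the later combinatorial arguments; your version delivers the same eigenvalue constraint with less machinery.
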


\begin{proof}

Any homogeneous polynomial $F\in S[V]$ is expressed as
\begin{equation}\nonumber
F=\sum_{\begin{subarray}{c}b\in \mathbb{Z}_{\geq 0}^n;\\
|b|=\mathrm{deg}\,F
\end{subarray}}
A_{b}z^{b} \quad (A_{b}\in \mathbb{C})~.
\end{equation}
By \eqref{g-action-z}, we have $g^*z^b=\lambda^{-b} z^b$.
So the assumption $g^*F=F$ implies that 
$A_b=0$ if $\lambda^{b}\neq 1$. 
Thus a homogeneous polynomial $F$ satisfying $g^*F=F$ is expressed
as follows:
 \begin{equation}\nonumber 
 F=\sum_{\begin{subarray}{c}b\in \mathbb{Z}_{\geq 0}^n;\\
|b|=\mathrm{deg}\,F,
\\
\lambda^{b}=1
\end{subarray}}
A_{b}z^{b}\quad (A_{b}\in \mathbb{C})~.
\end{equation}

Now let $k$ be the dimension of the $\lambda_1$-eigenspace of $g$.
Renumbering  the eigenvalues  if necessary,
we assume that the first $k$ eigenvalues of $g$ are $\lambda_1$.
Let $q_{\alpha}$($1\leq \alpha\leq n$) be the basis of $V$ associated to 
$z=(z^1,\ldots, z^n)$, 
i.e. it is the basis satisfying 
\begin{equation}\label{z-q}
z^{\alpha}(q_{\beta})=\delta^{\alpha}_{\beta}~.
\end{equation}
Then the eigenvector $q$ is a linear combination of $q_1,\ldots, q_k$.

First we show the statement when $k=1$. In this case,
$q$ is a constant multiple of $q_1$.
Given the relation \eqref{z-q},
$\frac{\partial^a z^b}{\partial z^a}(q_1)\neq 0$ holds if and only if
$\frac{\partial^a {z}^b}{\partial {z}^a}$ is a nonzero multiple of $z_1^k$.
Therefore if we differentiate $F$ by ${z}^a$ and evaluate it at $q_1$,
 terms that do not vanish has the following $b$:
\begin{equation}\nonumber
b=a+k\bm{e}_1~,\quad
\mathrm{deg}\,F=|b|=|a|+k~,
\quad 
\lambda^b=\lambda_1^k\lambda^a=1~.
\end{equation}
This proves the lemma when $k=1$.
Notice that the above argument works  when $q$ is a constant multiple of $q_1$
even if $k>1$.

Next we show the case when $k>1$. 
We take a  basis $\tilde{q}_1,\ldots,\tilde{q}_k$ of 
the $\lambda_1$-eigenspace of $g$ consisting of eigenvectors
such that $\tilde{q}_1=q$. 
Let $\tilde{z}^1,\ldots,\tilde{z}^n$ be
the linear coordinates of $V$ associated to the basis $\tilde{q}_1,\ldots,\tilde{q}_k, q_{k+1},\ldots,q_n$.
Then $\tilde{z}^1,\ldots,\tilde{z}^k$ are linear combination of 
$z^1,\ldots, z^k$ and $\tilde{z}_{\alpha}=z_{\alpha}$ for $k+1\leq \alpha\leq n$.
 Hence, by the chain rule, we have
$$
\frac{\partial^a}{\partial z^a}=\sum_{\begin{subarray}{c}
b\in \mathbb{Z}_{\geq 0}^n;\\|b|=|a|,\lambda^b=\lambda^a
\end{subarray}}
B_{a,b}
\frac{\partial^b}{\partial \tilde{z}^b}
$$
where $B_{a,b}$ are some constants.
This implies that if 
$\frac{\partial^a F}{\partial z^a}(q)\neq 0,$
then 
$\frac{\partial^b F}{\partial \tilde{z}^b}(q)\neq 0$
for at least one of the $b$'s satisfying $|b|=|a|$ and $\lambda^b=\lambda^a$.
But by the above argument,
$\frac{\partial^b F}{\partial \tilde{z}^b}(q)\neq 0$ implies that
$\lambda_1^{\mathrm{deg}F-|b| }\lambda^b=1$.
\end{proof}

\section{Regular elements and admissible triplet}
\label{sec:regularelements}
From here on, 
$G\subset GL(V)$ is a finite complex reflection group acting on $V=\mathbb{C}^n$.
The degrees of $G$ are denoted $d_1,d_2,\ldots , d_n$
and assumed to be in the descending order : $d_1\geq d_2\geq \ldots \geq d_n$.
The codegrees of $G$ are denoted $d_1^*(=0),d_2^*,\ldots, d_n^*$
and assumed to be in the ascending order : $d_1^*\leq d_2^*\leq \ldots \leq d_n^*$.
We also assume that any set of basic invariants $x^1,\cdots, x^n\in S[V]^G$ is
taken so that $\mathrm{deg}\, x^{\alpha}=d_{\alpha}$.

\subsection{Regular elements}
In this subsection, we recall  the definitions of regular vector, regular element
and the existence theorem for regular elements \cite{Springer} 
\cite[Ch.11]{LehrerTaylor}.

For $g\in G$ and $\zeta \in \mathbb{C}$, $V(g, \zeta)$ denotes
the $\zeta$-eigenspace of $g$ on $V$. 
\begin{definition}
\begin{enumerate}
\item[(i)] A vector $q\in V$ is {\it regular} if it lies on no reflection hyperplanes of $G$.
\item[(ii)] An element $g\in G$ is {\it regular} if $g$ has an eigenspace 
$V(g, \zeta)$ which contains a regular vector. In this case, 
we say that $g$ is a $\zeta$-regular element.
If $\zeta$ is a primitive $d$-th root of unity,
then we also say that $g$ is a $d$-regular element. 
\end{enumerate}
\end{definition}

\begin{remark}
A Coxeter element of an irreducible finite Coxeter group $G$  is a
$d_1$-regular element of $G$.
\end{remark}

\begin{remark} \label{remark:conjugacy}
Let $\zeta$ be a primitive $d$-th root of unity. 
Assume that a $\zeta$-regular element of $G$ exists. 
Then the followings are known.
See \cite[Theorem 11.24, Corollary 11.25]{LehrerTaylor}.
\begin{enumerate}
\item[(i)] Any $\zeta$-regular element is of order $d$. 
The order of its centralizer in $G$ equals to 
$\prod_{i:\, d_i\equiv 0\mod d} d_i$. 
Hence its conjugacy class consists of the $\prod_{i=1}^n d_i {\large /} \prod_{i: d_i\equiv 0 \mod d} d_i$ elements.
\item[(ii)] All  $\zeta$-regular elements are conjugate to each other in $G$.
\end{enumerate}
\end{remark}

For a positive integer $d$, we set
\begin{equation}\nonumber
\mathfrak{a}(d):=\#\{d_{\alpha}\mid \text{$d$ divides $d_{\alpha}$}\}
~,\quad
\mathfrak{b}(d):=\#\{d_{\alpha}^*\mid \text{$d$ divides $d_{\alpha}^*$}\}.
\end{equation}
Notice that $\mathfrak{b}(d)\geq 1$ holds since $d^*_1=0$ for any finite complex reflection group $G$. Moreover,
it is known that $\mathfrak{a}(d)\leq \mathfrak{b}(d)$ (cf. \cite[Theorem 11.28]{LehrerTaylor}). 
The following criterion for regularity is useful. 
See \cite[Theorem 11.28]{LehrerTaylor}

\begin{theorem}\label{thm:criteria}
Let $d$ be a positive integer.
A $d$-regular element of $G$ exists if and only if $\mathfrak{a}(d)=\mathfrak{b}(d)$.  
\end{theorem}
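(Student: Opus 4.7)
The plan is to establish both directions using Springer's theory of regular elements, with the Lehrer--Springer centralizer theorem as the central tool. That theorem asserts: if $g \in G$ is $\zeta$-regular for a primitive $d$-th root of unity $\zeta$, then the centralizer $C_G(g)$ acts faithfully on the eigenspace $V(g,\zeta)$ as a finite complex reflection group whose degrees are exactly $\{d_i : d \mid d_i\}$ and whose codegrees are exactly $\{d_i^* : d \mid d_i^*\}$.

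For the ``only if'' direction, suppose a $\zeta$-regular element $g$ exists, and set $V' := V(g,\zeta)$ with $m := \dim V'$. By the centralizer theorem, $C_G(g)$ acts on $V'$ as a faithful complex reflection group of rank $m$; any such group has precisely $m$ degrees and $m$ codegrees. Comparing cardinalities directly gives
$$
\mathfrak{a}(d) = \#\{d_i : d \mid d_i\} = m = \#\{d_i^* : d \mid d_i^*\} = \mathfrak{b}(d).
$$

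For the ``if'' direction, assume $\mathfrak{a}(d) = \mathfrak{b}(d)$. The strategy is to exhibit an element of $G$ whose $\zeta$-eigenspace has dimension $\mathfrak{a}(d)$ and meets the regular locus $V^{\mathrm{reg}} := V \setminus \bigcup_H H$. Following Lehrer--Springer, I would use Molien-type generating-function identities: the Shephard--Todd formula
$$
\frac{1}{|G|}\sum_{g \in G}\frac{1}{\det(I - tg\,|\,V)} = \prod_{i=1}^n \frac{1}{1 - t^{d_i}},
$$
together with a companion identity for the coinvariant algebra expressible through the codegrees. Specializing $t \to \zeta$ and tracking the orders of vanishing on the two sides isolates those $g$ whose $\zeta$-eigenspace has maximal possible dimension. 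The hypothesis $\mathfrak{a}(d) = \mathfrak{b}(d)$ furnishes precisely the balance of vanishing orders needed to guarantee that at least one such $g$ admits a $\zeta$-eigenvector lying off every reflection hyperplane.

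The main obstacle is the ``if'' direction: producing a \emph{regular} eigenvector, rather than merely one of maximal multiplicity, requires the delicate Poincaré-duality-type relationship between degrees and codegrees, and in particular a careful analysis of the vanishing orders above. The cleanest route is to cite the Lehrer--Springer theory in its full generality as in \cite[Ch.~11]{LehrerTaylor}, whose proof ultimately rests on the Shephard--Todd--Chevalley freeness of $S[V]$ over $S[V]^G$ and on the graded character of the coinvariant algebra.
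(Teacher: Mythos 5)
The paper does not prove this statement at all: it is quoted verbatim from \cite[Theorem 12.23]{LehrerTaylor}, so there is no in-paper argument to compare against. Your proposal ultimately does the same thing --- its last sentence defers to the Lehrer--Springer theory in \cite[Ch.~11]{LehrerTaylor} --- so in net effect you and the paper are aligned, and treating this as a black-box citation is entirely appropriate here.

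Regarding the sketch itself: the ``only if'' direction is a correct deduction, \emph{granted} the full Lehrer--Springer centralizer theorem (that $C_G(g)$ acts on $V(g,\zeta)$ as a reflection group whose degrees are $\{d_i : d\mid d_i\}$ \emph{and} whose codegrees are $\{d_i^*: d\mid d_i^*\}$); be aware that the codegree half of that theorem is itself a deep result, historically established partly by case-by-case verification, so you are not making the citation load lighter than the paper's. The ``if'' direction as written has a genuine gap: the Molien specialization $t\to\zeta$ only yields that the maximal dimension of a $\zeta$-eigenspace equals $\mathfrak{a}(d)$ (this is the paper's Theorem \ref{maximal-eigenspace}(i), valid with no hypothesis on $\mathfrak{b}(d)$), and no amount of ``tracking vanishing orders'' in that one identity will produce a $\zeta$-eigenvector off the reflection hyperplanes. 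The actual argument in \cite[Ch.~11--12]{LehrerTaylor} passes to a maximal eigenspace $E$, shows $N_G(E)/Z_G(E)$ acts on $E$ as a reflection group whose reflecting hyperplanes are the traces $H\cap E$, and uses the codegrees to count those traces; the equality $\mathfrak{a}(d)=\mathfrak{b}(d)$ is what forces $E\not\subseteq H$ for every $H$, whence $E$ meets $V^{\circ}$ (a linear subspace over $\mathbb{C}$ cannot be a finite union of proper subspaces). Since you explicitly fall back on the reference for exactly this point, the proposal is acceptable as a citation, but the Molien-series paragraph should not be presented as if it were the proof.
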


\begin{theorem} \label{maximal-eigenspace}
Let $d$ be a positive integer and 
let $\zeta\in \mathbb{C}$ be a primitive $d$-th root of unity.
\begin{enumerate}
\item[(i)] The maximal dimension of the $\zeta$-eigenspace 
of the elements in $G$ 
is $\mathfrak{a}(d)$;
$$\mathrm{max}\, \{\dim V(g,\zeta)\mid g\in G\}=\mathfrak{a}(d).$$
\item[(ii)] Let $x^1,\ldots, x^n$ be a set of basic invariants of $G$. 
Then for any  $g\in G$ with $\dim V(g,\zeta)=\mathfrak{a}(d)$, it holds that 
$x^{\alpha}|_{V(g,\zeta)}=0$ if $d$ does not divide $d_{\alpha}$. 
It also holds that
$\{x^{\alpha}\mid \text{$d$ divides $d_{\alpha}$}\}$
are algebraically independent on $V(g,\zeta)$.
\item[(iii)] If $g\in G$ is a $\zeta$-regular element, then 
$\dim V(g,\zeta)=\mathfrak{a}(d)$.
\end{enumerate}
\end{theorem}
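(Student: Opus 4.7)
The plan is to treat the three parts in the order: vanishing in (ii), then (iii), then (i), and finally the algebraic independence in (ii). The vanishing in (ii) is immediate from $G$-invariance and homogeneity: for $v\in V(g,\zeta)$ and a basic invariant $x^\alpha$ of degree $d_\alpha$,
\[
x^\alpha(v) = x^\alpha(g^{-1}v) = \zeta^{-d_\alpha}\,x^\alpha(v),
\]
forcing $x^\alpha(v)=0$ whenever $d\nmid d_\alpha$.

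For (iii), the plan is to exploit the block structure of the Jacobian at a regular eigenvector. Extend $q$ to a $g$-eigenbasis $q_1=q,q_2,\ldots,q_n$ of $V$ so that $q_1,\ldots,q_k$ span $V(g,\zeta)$ with $k:=\dim V(g,\zeta)$, and let $z^1,\ldots,z^n$ be the dual coordinates, so that \eqref{g-action-z} holds with $\lambda_1=\cdots=\lambda_k=\zeta$ and $\lambda_\beta\neq\zeta$ for $\beta>k$. Applying Lemma \ref{dFdz} to $F=x^\alpha$ with $a=\bm{e}_\beta$ gives
\[
\frac{\partial x^\alpha}{\partial z^\beta}(q)\neq 0 \quad\Longrightarrow\quad \lambda_\beta=\zeta^{\,1-d_\alpha},
\]
and since $\zeta^{\,1-d_\alpha}=\zeta$ if and only if $d\mid d_\alpha$, after reordering the rows so that $x^1,\ldots,x^{\mathfrak{a}(d)}$ are exactly the invariants with $d\mid d_\alpha$, the Jacobian matrix $J(q)=(\partial x^\alpha/\partial z^\beta(q))$ becomes block diagonal with an upper-left block $A$ of size $\mathfrak{a}(d)\times k$ and a lower-right block $B$ of size $(n-\mathfrak{a}(d))\times(n-k)$. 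Regularity of $q$, together with the Chevalley--Shephard--Todd Jacobian formula expressing $\det J$ as a nonzero scalar multiple of a product of powers of the reflection hyperplane forms, gives $\det J(q)\neq 0$; since a nonsingular block diagonal matrix must have square nonsingular diagonal blocks, we conclude $k=\mathfrak{a}(d)$.

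For (i), the upper bound is an immediate consequence of the vanishing in (ii) and the finiteness of the quotient morphism $\pi\colon V\to V/G$: the image $\pi(V(g,\zeta))$ lies in the coordinate subspace $L:=\{x^\alpha=0 : d\nmid d_\alpha\}$ of dimension $\mathfrak{a}(d)$, so $\dim V(g,\zeta)=\dim\pi(V(g,\zeta))\leq \mathfrak{a}(d)$. For the matching lower bound I would establish the identity
\[
\pi^{-1}(L)=\bigcup_{h\in G} V(h,\zeta).
\]
The inclusion $\supset$ is immediate from (ii); conversely, if $v\in\pi^{-1}(L)$ then $x^\alpha(\zeta v)=\zeta^{d_\alpha}x^\alpha(v)=x^\alpha(v)$ for every $\alpha$ (either $d\mid d_\alpha$ or $x^\alpha(v)=0$), so $\zeta v$ and $v$ lie in the same $G$-orbit and $hv=\zeta v$ for some $h\in G$. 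Finiteness of $\pi$ then gives $\dim \pi^{-1}(L)=\dim L=\mathfrak{a}(d)$, and since this is a finite union of linear subspaces, at least one $V(h,\zeta)$ must realize this dimension. The algebraic independence in (ii) follows at once: when $\dim V(g,\zeta)=\mathfrak{a}(d)$, the finite morphism $\pi|_{V(g,\zeta)}\colon V(g,\zeta)\to L$ between varieties of equal dimension is dominant, so the restrictions $\{x^\alpha|_{V(g,\zeta)} : d\mid d_\alpha\}$ of the coordinate functions on $L$ are algebraically independent.

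The principal obstacle will be the identity $\pi^{-1}(L)=\bigcup_h V(h,\zeta)$, since it is this step that upgrades the dimension inequality into an attained maximum and is where the specific structure of $L$ as a coordinate subspace is genuinely exploited; the other ingredients are relatively routine consequences of Chevalley--Shephard--Todd and Lemma \ref{dFdz}.
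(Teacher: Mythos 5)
Your proof is correct, but it is genuinely more self-contained than what the paper does: the paper's entire proof of this theorem is a citation, deferring (i), (iii) and the algebraic-independence half of (ii) to \cite[Proposition 11.14, Theorem 11.24]{LehrerTaylor} and deducing only the vanishing statement in (ii) from Lemma \ref{dFdz} (your direct computation $x^{\alpha}(v)=\zeta^{-d_{\alpha}}x^{\alpha}(v)$ is exactly that lemma in the case $a=0$). What you have written is essentially a reconstruction of Springer's original arguments underlying those citations. Your proof of (iii) -- sorting the eigenbasis so that $\lambda_1=\cdots=\lambda_k=\zeta$, using Lemma \ref{dFdz} with $a=\bm{e}_{\beta}$ to force the Jacobian $J(q)$ into a block form with blocks of sizes $\mathfrak{a}(d)\times k$ and $(n-\mathfrak{a}(d))\times(n-k)$, and invoking \eqref{detJ} to conclude $k=\mathfrak{a}(d)$ -- is the same device the paper itself uses later in the proof of Proposition \ref{uniqueness-compatible}, so it fits the paper's toolkit well. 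Your treatment of (i) via the identity $\pi^{-1}(L)=\bigcup_{h\in G}V(h,\zeta)$ and the dimension-preserving property of the finite quotient map, and of the algebraic independence via dominance of $\pi|_{V(g,\zeta)}\colon V(g,\zeta)\to L$ onto the irreducible $\mathfrak{a}(d)$-dimensional target, are both sound; the only ingredients you use beyond the paper's own lemmas are standard facts (orbit separation by $S[V]^G$ and dimension theory for finite morphisms). The trade-off is the usual one: the paper's citation keeps the exposition short, while your version makes the regularity theory self-contained at the cost of importing a little algebraic geometry.
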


\begin{proof}
For (i) and the second statement of (ii), see \cite[Proposition 11.14]{LehrerTaylor}.
The first statement of (ii) follows from Lemma \ref{dFdz}. For (iii), see 
\cite[Theorem 11.24]{LehrerTaylor}.
\end{proof}
\subsection{$(g,\zeta)$-graded coordinates}
\label{sec:graded-coordinates}

Let $G$ be a finite complex reflection group acting on $V=\mathbb{C}^n$.
The set of all reflection hyperplanes of $G$ is denoted  $\mathcal{A}$.
We set $$V^{\circ}=V\setminus \bigcup_{H\in \mathcal{A}}H~.$$
For a reflection hyperplane $H\in \mathcal{A}$,
let $e(H)$ be the order of the cyclic group fixing $H$ pointwise
and 
let $L_H\in V^*$ denote a linear map such that $\mathrm{Ker} \,L_H=H$.
For any set of basic invariants $x=(x^1,\ldots, x^n)$
and any linear coordinate system $z=(z^1,\ldots, z^n)$ of $V$,  we have
\begin{equation}\label{detJ}
\det J=\text{(a nonzero constant)}\times
\prod_{H\in \mathcal{A}} L_{H}^{e(H)-1}~,
\end{equation}
where
\begin{equation}\nonumber
J=\begin{bmatrix}
\frac{\partial x^1}{\partial z^1}&\cdots&\frac{\partial x^1}{\partial z^n}
\\
\vdots&\ddots&\vdots
\\
\frac{\partial x^n}{\partial z^1}&\cdots&\frac{\partial x^n}{\partial z^n}
\end{bmatrix}
\end{equation}
is the Jacobian matrix.
See \cite[Theorem 9.8]{LehrerTaylor}.

\begin{remark}\label{xlocal-coord}
Eq.\eqref{detJ}
 implies that the Jacobian matrix $ J$ is invertible on $V^{\circ}$.
Especially, 
$J(q)$ is invertible when $q$ is a regular vector.
Then the inverse function theorem implies  that $x=(x^1,\ldots, x^n)$ makes a 
local coordinate system in a neighborhood of a regular vector $q\in V^{\circ}$.
\end{remark}

\begin{lemma}\label{eigen}
Let $\zeta \in \mathbb{C}$ and let $g\in G$ be a $\zeta$-regular element. 
\begin{enumerate}
\item
$n$ eigenvalues of $g$ are
$\zeta^{1-d_{\alpha}}$ $(1\leq \alpha\leq n)$,
where $d_1,\cdots,d_n$ are the degrees of $G$.
In particular, $\zeta^{d_{\alpha}}=1$ for some $\alpha$.
\item
There exists a coordinate system $z=(z^1,\ldots,z^n)$ of $V$ satisfying
$$
g^*z^{\alpha}=\zeta^{d_{\alpha}-1}z^{\alpha}\quad (1\leq \alpha\leq n)~.
$$
\end{enumerate}
\end{lemma}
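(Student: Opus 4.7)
The plan is to read off both statements from the non-vanishing of the Jacobian of basic invariants at a regular eigenvector, together with Lemma~\ref{dFdz}. First I would choose a basis $q_1,\ldots, q_n$ of $V$ consisting of eigenvectors of $g$, arranged so that $q_1$ is a regular vector lying in $V(g,\zeta)$; writing $g q_\beta = \lambda_\beta q_\beta$, this gives $\lambda_1=\zeta$. Let $z=(z^1,\ldots, z^n)$ be the dual linear coordinates, which by \eqref{g-action-z} satisfy $g^*z^\alpha = \lambda_\alpha^{-1}z^\alpha$. Fix a set of basic invariants $x^1,\ldots, x^n$ with $\mathrm{deg}\,x^\alpha=d_\alpha$.

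For (1), I would apply Lemma~\ref{dFdz} to $F=x^\alpha$ and $a=\bm{e}_\beta$: since $g^*x^\alpha=x^\alpha$, any non-vanishing of $(\partial x^\alpha/\partial z^\beta)(q_1)$ forces $\zeta^{d_\alpha-1}\lambda_\beta=1$, i.e.\ $\lambda_\beta=\zeta^{1-d_\alpha}$. Because $q_1$ is regular, Remark~\ref{xlocal-coord} gives $\det J(q_1)\neq 0$, so expanding the Jacobian determinant as a sum over permutations produces a $\sigma\in S_n$ for which $(\partial x^{\sigma(\beta)}/\partial z^\beta)(q_1)\neq 0$ for every $\beta$. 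The preceding observation then yields $\lambda_\beta=\zeta^{1-d_{\sigma(\beta)}}$ for all $\beta$, so the multiset of eigenvalues of $g$ coincides with $\{\zeta^{1-d_\alpha}\mid 1\leq \alpha\leq n\}$. Setting $\beta=1$ gives $\zeta=\zeta^{1-d_{\sigma(1)}}$, hence $\zeta^{d_{\sigma(1)}}=1$, which is the ``in particular'' claim.

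For (2), I would simply relabel the eigenbasis using the permutation $\sigma$ from (1): set $\tilde q_\alpha := q_{\sigma^{-1}(\alpha)}$, so that $g\tilde q_\alpha = \lambda_{\sigma^{-1}(\alpha)}\tilde q_\alpha = \zeta^{1-d_\alpha}\tilde q_\alpha$. The associated dual linear coordinates $\tilde z=(\tilde z^1,\ldots, \tilde z^n)$ then satisfy $g^*\tilde z^\alpha=\zeta^{d_\alpha-1}\tilde z^\alpha$ by \eqref{g-action-z}, which is the desired coordinate system.

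The only substantive obstacle is the combinatorial step that upgrades the pointwise information ``for each column $\beta$, some entry of $J(q_1)$ in that column is non-zero'' into the existence of a single permutation $\sigma$ that matches up rows and columns simultaneously; this is precisely what the Leibniz expansion of the non-zero determinant $\det J(q_1)$ provides, so the argument is essentially forced once Lemma~\ref{dFdz} and \eqref{detJ} are in hand.
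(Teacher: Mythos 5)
Your proposal is correct and follows essentially the same route as the paper: part (1) rests on $\det J(q)\neq 0$ at the regular vector combined with Lemma~\ref{dFdz} applied to $F=x^{\alpha}$, $a=\bm{e}_{\beta}$, and part (2) is just a renumbering of the eigenbasis. The only difference is that you spell out the step the paper leaves implicit (extracting a permutation from the Leibniz expansion of the non-vanishing determinant), which is exactly the right way to fill that gap.
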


\begin{proof} 
(1) follows from the fact $J(q)\neq 0$ and Lemma \ref{dFdz}
\cite[Theorem 11.56]{LehrerTaylor}.
\\
(2) By renumbering the basis if necessary,  we have
$$
gq_{\alpha}=\zeta^{1-d_{\alpha}}q_{\alpha}~.
$$
Then the coordinates $z^1,\ldots, z^n$ dual to $q_1,\ldots, q_n$ satisfy
$$
g^*z^{\alpha}=\zeta^{d_{\alpha}-1}z^{\alpha}~.
$$
\end{proof}

\begin{definition}
Let $\zeta \in \mathbb{C}$ and let $g\in G$ be a $\zeta$-regular element. 
We say that  a linear coordinate system $z=(z^1,\ldots, z^n)$  of $V$ is a $(g,\zeta)$-graded coordinate system if 
$$
g^* z^{\alpha}=\zeta^{d_{\alpha}-1} z^{\alpha} \quad (1\leq \alpha\leq n)
$$
holds.
\end{definition}

\subsection{Admissible triplet}
\label{sec:admissible-triplet}
In \cite{Satake2020}, Satake defined the notion of admissible triplet.
\begin{definition}
Let $G$ be a finite complex reflection group acting on $V=\mathbb{C}^n$.
Then a triple $(g,\zeta,q)$ consisting of 
\begin{itemize}
\item a primitive $d_1$-th root of unity $\zeta$
(where $d_1$ is the highest degree of $G$),
\item a $\zeta$-regular element $g\in G$,
\item a regular vector $q$ satisfying $gq=\zeta q$,
\end{itemize}
is called {\it an admissible triplet} of $G$.
\end{definition}

\begin{lemma}\label{lem:van1}
Let $(g,\zeta,q)$ be an admissible triplet
and let $z=(z^1,\ldots, z^n)$  be a $(g,\zeta)$-graded system $z$.
\begin{enumerate}
\item If $d_{\alpha}<d_1$, $z^{\alpha}(q)=0$. Moreover,
there exists $1\leq \alpha\leq \mathfrak{a}(d_1)$ such that $z^{\alpha}(q)\neq 0$.
\item
For a $G$-invariant homogeneous polynomial $F\in S[V]^G$ and $a\in \mathbb{Z}_{\geq 0}^n$, the following holds:
$$
\text{if}\quad  \frac{\partial^a F}{\partial  z^a}(q)\neq 0
\quad  \text{then}\quad  \mathrm{deg}F\equiv a\cdot d\mod d_1~.$$
\item \label{van1-2}
 For a  set of basic invariants $x=(x^1,\ldots, x^n)$,
$x^{\alpha}(q)=0$ if $d_{\alpha}<d_1$. 
\item \label{van1-1}
 For a  set of basic invariants $x=(x^1,\ldots, x^n)$ and $a\in \mathbb{Z}^n_{\geq 0}$,
$$
\text{if}\quad
\frac{\partial^a x^{\alpha}}{\partial z^a}(q)\neq 0 ,
\quad \text{then}\quad
d_{\alpha}\equiv \ a\cdot d \mod d_1~.
$$
\end{enumerate}
\end{lemma}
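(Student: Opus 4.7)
The plan is to deduce all four parts from Lemma \ref{dFdz} after matching the eigenvalue data to the present setting. Since $z=(z^1,\ldots,z^n)$ is a $(g,\zeta)$-graded system, the eigenvalue of $g$ on the basis vector $q_\alpha$ dual to $z^\alpha$ is $\lambda_\alpha=\zeta^{1-d_\alpha}$. The key observation is that $\zeta^{d_1}=1$ forces $\zeta^{1-d_1}=\zeta$, so the regular vector $q$ (which satisfies $gq=\zeta q$) lies in the eigenspace indexed by the degree-$d_1$ coordinates. This lets me invoke Lemma \ref{dFdz} with ``$\lambda_1$'' taken to be $\zeta$.

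For (1), I would evaluate $g^*z^\alpha=\zeta^{d_\alpha-1}z^\alpha$ at $q$: the left-hand side equals $z^\alpha(g^{-1}q)=\zeta^{-1}z^\alpha(q)$, which yields $(\zeta^{d_\alpha}-1)\,z^\alpha(q)=0$. When $1\le d_\alpha<d_1$, primitivity of $\zeta$ as a $d_1$-th root of unity forces $z^\alpha(q)=0$. Conversely, since $q\ne 0$ some $z^\alpha(q)$ is nonzero, and by what was just shown such an $\alpha$ must satisfy $d_\alpha=d_1$. Because $d_1$ is the highest degree, divisibility of $d_\alpha$ by $d_1$ is equivalent to equality with $d_1$, so these indices are exactly $1\le\alpha\le\mathfrak{a}(d_1)$ in the descending-order convention.

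Part (2) is then the direct application. Lemma \ref{dFdz} applied with $\lambda_1=\zeta$ asserts that if $\partial^a F/\partial z^a(q)\ne 0$, then $\zeta^{\deg F-|a|}\prod_\alpha \zeta^{(1-d_\alpha)a_\alpha}=1$, which telescopes to $\zeta^{\deg F-a\cdot d}=1$; the $d_1$-primitivity of $\zeta$ converts this into the stated congruence modulo $d_1$. Parts (3) and (4) are immediate specializations of (2) applied to $F=x^\alpha$, which is $G$-invariant and homogeneous of degree $d_\alpha$: taking $a=0$ together with the inequality $d_\alpha\le d_1$ yields (3), while general $a$ yields (4).

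I do not expect a real obstacle here; the argument is essentially a packaging of Lemma \ref{dFdz} together with the definitions. The only bookkeeping point worth emphasizing is matching the labelling of Lemma \ref{dFdz} so that $q$ is assigned to the $\lambda_1$-eigenspace, and the identity $\zeta^{1-d_1}=\zeta$ supplies exactly this; moreover Lemma \ref{dFdz} already handles eigenspaces of dimension greater than one, so no additional argument is required when $\mathfrak{a}(d_1)>1$.
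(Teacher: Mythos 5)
Your proposal is correct and follows essentially the same route as the paper: identify the eigenvalues of $g$ as $\zeta^{1-d_\alpha}$ (Lemma \ref{eigen}), note that $q$ lies in the $\zeta$-eigenspace, deduce (1) from the eigenvalue equation, obtain (2) by applying Lemma \ref{dFdz} with $\lambda_1=\zeta$ and using primitivity of $\zeta$, and specialize to get (3) and (4). You merely spell out the telescoping computation $\zeta^{\deg F-|a|}\lambda^a=\zeta^{\deg F-a\cdot d}$ that the paper leaves implicit.
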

\begin{proof}
When $(g,\zeta,q)$ is an admissible triplet, 
$g$ has eigenvalues $\zeta=\zeta^{1-d_1},\zeta^{1-d_2},\ldots,\zeta^{1-d_n}$
by   Lemma \ref{eigen}
and the regular vector $q$ is an eigenvector of $g$ belonging to the first eigenvalue $\zeta$. 
This proves (1).
(2)  follows from Lemma \ref{dFdz}.
\eqref{van1-2} and \eqref{van1-1} immediately follow from (2).
\end{proof}

To conclude  this section,
we list irreducible finite complex reflection groups which admit 
admissible triplets.
Recall  that an admissible triplet of $G$ exists if and only if 
$\mathfrak{a}(d_1)=\mathfrak{b}(d_1)$ (Theorem \ref{thm:criteria}).

The following irreducible finite complex reflection groups satisfy $\mathfrak{a}(d_1)=\mathfrak{b}(d_1)$ and hence have admissible triplets (see, \cite[Tables D.3, D.5]{LehrerTaylor},\cite[Tables B.1]{OrlikTerao}).
\begin{itemize}
\item  An irreducible finite complex reflection group $G$ is called a {\it duality group} when the degrees and the codegrees of $G$ satisfy the relation
$$d_{\alpha}+d_{\alpha}^*=d_1\quad (1\leq \alpha\leq n). $$
The duality groups include
$$
G(m,m,n),\quad G(m,1,n)
$$
and $26$ groups of exceptional type $$G_{i}\quad 
(4\leq i\leq 37,~~
i\neq 7,11,12,13,15,19,22,31)~.
$$
For a duality group $G$,  $d_1>d_2$ holds. 
Therefore $d_1$ divides $d_{\alpha}$ if and only if  $\alpha=1$
and $d_1$ divides $d_{\alpha}^*$ if and only if $\alpha=1$ 
(i.e. $d_1$ only  divides $d_1^*=0$). 
Thus $\mathfrak{a}(d_1)=\mathfrak{b}(d_1)=1$. 
\item The following groups of rank $2$ satisfy $d_1=d_2$,
$d_1^*=0$, $d_2^*=d_1$. Therefore $\mathfrak{a}(d_1)=\mathfrak{b}(d_1)=2$.
$$
G_{7},~~G_{11},~~G_{19}~.
$$
\item The following groups of rank $2$ satisfy
$d_1>d_2$, $d_1^*=0$, $d_1<d_2^*<2d_2$.
Therefore $\mathfrak{a}(d_1)=\mathfrak{b}(d_1)=1$.
$$
G_{12},~~G_{13},~~G_{22}~.
$$
\item $G_{31}$ has degrees $24,20,12,8$ and codegrees
$0,12,16,28$. Therefore $d_1=24$ and 
$\mathfrak{a}(d_1)=\mathfrak{b}(d_1)=1$.
\end{itemize}

The following groups do not satisfy 
$\mathfrak{a}(d_1)=\mathfrak{b}(d_1)$ and hence they do not have admissible triplets.
\begin{itemize}
\item $G_{15}$ has degrees $24,12$ and codegrees $0,24$.
Therefore $\mathfrak{a}(d_1)=1\neq \mathfrak{b}(d_1)=2$.
\item $G(m,p,n)$ ($1<p<m$, $p|m$) has degrees
$$
m,~~2m,\ldots,~~(n-1)m=d_1,~~\frac{nm}{p}
$$
and codegrees
$$
0,~~m,~~2m,~~\ldots,~~(n-1)m~.
$$
Therefore $\mathfrak{a}(d_1)=1\neq \mathfrak{b}(d_1)=2$.
\end{itemize}

\section{Good basic invariants}
\label{sec:good}
In this section, $G$ is a finite complex reflection group acting on $V=\mathbb{C}^n$
with the degrees $d_1\geq d_2\geq \ldots\geq d_n$.
As noted in  Theorem \ref{thm:criteria},
the necessary and sufficient 
condition for the existence of a $d_1$-regular element is 
$\mathfrak{a}(d_1)=\mathfrak{b}(d_1)$.
We assume that $G$ satisfies the condition $\mathfrak{a}(d_1)=\mathfrak{b}(d_1)$.
Since $\mathfrak{a}(d_1)$ is the number of degrees that can be divided by $d_1$, 
we have
$$
d_1=\cdots =d_{\mathfrak{a}(d_1)}>d_{\mathfrak{a}(d_1)+1}\geq \ldots\geq d_n~.
$$
We put 
\begin{equation}\nonumber 
\mathcal{Z}=\{(a_1,\ldots,a_n)\in \mathbb{Z}_{\geq 0}^n \mid 
a_1=\cdots=a_{\mathfrak{a}(d_1)}=0
\}~,
\end{equation}
and
\begin{equation}\label{defIalpha}
\begin{split}
I_{\alpha}^{(k)}&=\{
a\in \mathbb{Z}_{\geq 0}^n\mid 
a\cdot d=d_{\alpha}+kd_1,~~|a|\geq 2
\}
\quad (1\leq \alpha\leq n,~~ k\in \mathbb{Z}_{\geq 0})
~.
\end{split}\end{equation}
We use $I_{\alpha}^{(0)}$ in the definition of good basic invariants  in
\S \ref{subsection-good} and $I_{\alpha}^{(1)}$ in the expression of the potential vector field in Corollary \ref{vector-potential}.
Notice that $a=(a_1,\ldots, a_n)\in I_{\alpha}^{(0)}$ implies that 
$a_{\gamma}=0$ for all $\gamma$ such that $d_{\gamma}\geq d_{\alpha}$.
Especially, $a\in I_{\alpha}^{(0)}$ implies $a\in \mathcal{Z}$.

\subsection{Compatible basic invariants}
\label{subsection-compatible}
\begin{definition}
Let $(g,\zeta,q)$ be an admissible triplet of $G$
and let $z=(z^1,\ldots, z^n)$  be a $(g,\zeta)$-graded coordinate system.
We say that a set of basic invariants $x=(x^1,\ldots, x^n)$ is {\it compatible} 
at $q$ with $z$ if
\begin{equation}\label{compatible-x0}
\frac{\partial x^{\alpha}}{\partial z^{\beta}}(q)=\delta^{\alpha}_{\beta}
\quad (1\leq \alpha,\beta\leq n)~.
\end{equation}
\end{definition}

\begin{proposition}\label{uniqueness-compatible}
For any admissible triplet $(g,\zeta,q)$ and any  $(g,\zeta)$-graded coordinate system $z$,
there exists a set of basic invariants  which  is compatible with $z$  at $q$.
\end{proposition}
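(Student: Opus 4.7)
The plan is to start with an arbitrary set of basic invariants and correct it by a suitable linear change to arrange \eqref{compatible-x0}, using the fact that the Jacobian at a regular vector is invertible and has a special block structure dictated by the degrees.

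Concretely, first I would pick any set of basic invariants $y=(y^1,\ldots,y^n)$ with $\deg y^\alpha = d_\alpha$ and consider the Jacobian matrix
\[
J(q)^\alpha{}_\beta=\frac{\partial y^\alpha}{\partial z^\beta}(q).
\]
Since $q$ is a regular vector, Remark \ref{xlocal-coord} guarantees that $J(q)$ is invertible. The crucial observation is that $J(q)^\alpha{}_\beta=0$ whenever $d_\alpha\neq d_\beta$. Indeed, by Lemma \ref{lem:van1}(4) applied with $a=\bm{e}_\beta$, a nonvanishing entry forces $d_\alpha\equiv d_\beta\pmod{d_1}$; but $1\le d_\alpha,d_\beta\le d_1$, so the congruence forces equality. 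Therefore, after grouping indices by common degree, $J(q)$ is block diagonal, and each diagonal block is invertible.

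Next I would define
\[
x^\alpha=\sum_{\beta=1}^{n}\bigl(J(q)^{-1}\bigr)^\alpha{}_\beta\, y^\beta\qquad (1\le \alpha\le n).
\]
The inverse $J(q)^{-1}$ inherits the same block-diagonal pattern, so $(J(q)^{-1})^\alpha{}_\beta=0$ unless $d_\alpha=d_\beta$. Hence $x^\alpha$ is a homogeneous polynomial of degree $d_\alpha$, is $G$-invariant (as a $\mathbb{C}$-linear combination of the $y^\beta$), and $(x^1,\ldots,x^n)$ is obtained from $(y^1,\ldots,y^n)$ by a nonsingular linear transformation; in particular the $x^\alpha$'s are algebraically independent and form a set of basic invariants.

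Finally, the compatibility condition is verified by a direct computation:
\[
\frac{\partial x^\alpha}{\partial z^\beta}(q)
=\sum_{\gamma=1}^n \bigl(J(q)^{-1}\bigr)^\alpha{}_\gamma\,\frac{\partial y^\gamma}{\partial z^\beta}(q)
=\sum_{\gamma=1}^n \bigl(J(q)^{-1}\bigr)^\alpha{}_\gamma\,J(q)^\gamma{}_\beta
=\delta^\alpha_\beta,
\]
which proves \eqref{compatible-x0}. There is no real obstacle; the only delicate point is the block-diagonal structure of $J(q)$, and this is precisely what ensures that inverting $J(q)$ does not mix polynomials of different degrees, so that homogeneity of each $x^\alpha$ is preserved.
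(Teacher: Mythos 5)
Your proposal is correct and is essentially the paper's own proof: both start from an arbitrary set of basic invariants, use Lemma \ref{lem:van1} with $a=\bm{e}_\beta$ to show $J(q)$ is block diagonal by degree, invoke invertibility at the regular vector $q$, and apply $J(q)^{-1}$ (equivalently, the blockwise inverses) to produce a compatible set while preserving homogeneity and degrees. Your explicit check that the congruence $d_\alpha\equiv d_\beta\pmod{d_1}$ forces $d_\alpha=d_\beta$ is a detail the paper leaves implicit, but the argument is the same.
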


\begin{proof} 
Take any set of basic invariants $x=(x^1,\ldots, x^n)$.
We construct from $x$ another set of basic invariants which is compatible with $z$ at $q$.
Applying Lemma \ref{lem:van1} \eqref{van1-1} to the case $a=\bm{e}_{\beta}$,  we see that 
\begin{equation}\nonumber
\frac{\partial x^{\alpha}}{\partial z^{\beta}}(q)=0 
\end{equation}
holds if  
$d_{\alpha}\neq  d_{\beta}$.
This means that the Jacobian matrix $J(q)=(\frac{\partial x^{\alpha}}{\partial z^{\beta}}(q))$, evaluated at the regular vector $q$,
is block diagonal
with
each block consisting of $\alpha$'s with the same degree $d_{\alpha}$. 
So $J(q)$ is written as follows.
$$
J(q)=\begin{bmatrix}
\frac{\partial x^1}{\partial z_1}(q)&\cdots&\frac{\partial x^1}{\partial z^n}(q)
\\
\vdots&\ddots&\vdots
\\
\frac{\partial x^n}{\partial z_1}(q)&\cdots&\frac{\partial x^n}{\partial z^n}(q)
\end{bmatrix}=
\begin{bmatrix}
J_1&&O\\
&\ddots&\\
O&&J_k
\end{bmatrix}~.
$$
Since $q$ is a regular vector,  $J(q)$ is invertible by Remark \ref{xlocal-coord}.
Hence each block $J_i$ is invertible.
If we put
\begin{equation}\nonumber
\begin{bmatrix}\tilde{x}^1\\\vdots\\ \tilde{x}^n\end{bmatrix}
=\begin{bmatrix}
J_1^{-1}&&O\\&\ddots&\\O&&J_k^{-1}
\end{bmatrix}
\begin{bmatrix}
x^1\\\vdots\\x^n
\end{bmatrix}~,
\end{equation}
we obtain a new set of basic invariants $\tilde{x}=(\tilde{x}^1,\ldots,\tilde{x}^n)$ with $\mathrm{deg}\,\tilde{x}^{\alpha}=d_{\alpha}$. Then we have
$$
\begin{bmatrix}
\frac{\partial \tilde{x}^1}{\partial z_1}(q)&\cdots&
\frac{\partial \tilde{x}^1}{\partial z^n}(q)
\\
\vdots&\ddots&\vdots
\\
\frac{\partial \tilde{x}^n}{\partial z_1}(q)&\cdots&
\frac{\partial \tilde{x}^n}{\partial z^n}(q)
\end{bmatrix}=
\begin{bmatrix}
\frac{\partial \tilde{x}^1}{\partial x_1}(q)&\cdots&
\frac{\partial \tilde{x}^1}{\partial x^n}(q)
\\
\vdots&\ddots&\vdots
\\
\frac{\partial \tilde{x}^n}{\partial z_1}(q)&\cdots&
\frac{\partial \tilde{x}^n}{\partial z^n}(q)
\end{bmatrix}
\begin{bmatrix}
\frac{\partial {x}^1}{\partial z_1}(q)&\cdots&
\frac{\partial {x}^1}{\partial z^n}(q)
\\
\vdots&\ddots&\vdots
\\
\frac{\partial {x}^n}{\partial z_1}(q)&\cdots&
\frac{\partial {x}^n}{\partial z^n}(q)
\end{bmatrix}
=I.
$$
Thus $\tilde{x}$ is compatible with $z$ at $q$.
\end{proof}

The next lemma will be useful in the proof of 
Theorem \ref{existence-good}.
\begin{lemma}\label{a-b}
Let $(g,\zeta,q)$ be an admissible triplet of $G$,
$z=(z^1,\ldots, z^n)$ a $(g,\zeta)$-graded coordinate system
and 
let $x$ be a set of basic invariants
compatible at $q$ with  $z$.
\begin{enumerate}
\item 
For $a\in \mathbb{Z}_{\geq 0}^n$ and $b \in \mathcal{Z}$, the following holds.
$$
\text{If  }  \frac{\partial^a x^b}{\partial z^a}(q)\neq 0,  \text{  then }
\begin{cases}
\text{either  } (|a|>|b| \text{ and } a\cdot d=b\cdot d+k d_1 (k\in \mathbb{Z}_{\geq 0})), \\
\text{or } a=b
\end{cases}~.
$$
\item 
For $a\in \mathcal{Z}$, 
\begin{equation}\nonumber
\frac{\partial^a x^a}{\partial z^a}(q)= a! ~.
\end{equation}
\end{enumerate}
\end{lemma}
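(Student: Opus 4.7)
The plan is to pass to shifted coordinates $w^\alpha=z^\alpha-q^\alpha$ centered at $q$, set $y^\alpha=x^\alpha-x^\alpha(q)$, and read both assertions from the Taylor expansions of the $y^\alpha$ and of the products $\prod_\alpha(y^\alpha)^{b_\alpha}$ at $w=0$. The compatibility of $x$ with $z$ at $q$ says precisely that $y^\alpha=w^\alpha+(\text{higher order in }w)$, so the leading term of $x^b$ should turn out to be $w^b$, directly yielding the factor $b!$ in part (2).

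The crux is the following weight estimate: for every monomial $w^c$ with $c\neq 0$ occurring in $y^\alpha$ one has $c\cdot d\geq d_\alpha$. I would derive it from Lemma \ref{lem:van1}(2) applied to $F=x^\alpha$, which forces $c\cdot d\equiv d_\alpha\pmod{d_1}$; together with $c\neq 0\Rightarrow c\cdot d>0$ and $d_\alpha\leq d_1$, this rules out $c\cdot d<d_\alpha$ (the alternative would give $c\cdot d\leq d_\alpha-d_1\leq 0$). This is the step I expect to require the most care; the remainder is bookkeeping of how the individual $y^\alpha$-expansions combine in the product.

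With the estimate in hand, I would use $b\in\mathcal{Z}$ and Lemma \ref{lem:van1}(3) to write $x^b=\prod_\alpha(y^\alpha)^{b_\alpha}$: the factors with $\alpha\leq\mathfrak{a}(d_1)$ drop out because $b_\alpha=0$, and for $\alpha>\mathfrak{a}(d_1)$ we have $x^\alpha(q)=0$ so $x^\alpha=y^\alpha$. Any monomial $w^a$ in the expansion arises by selecting a $w^{c^{(\alpha,j)}}$ term from each of the $b_\alpha$ copies of $y^\alpha$, so $a=\sum c^{(\alpha,j)}$, $|a|=\sum|c^{(\alpha,j)}|\geq|b|$, and $a\cdot d=\sum c^{(\alpha,j)}\cdot d\geq\sum_{\alpha,j}d_\alpha=b\cdot d$. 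Combined with the congruence $a\cdot d\equiv b\cdot d\pmod{d_1}$ from Lemma \ref{lem:van1}(2) applied to $F=x^b$, this yields $a\cdot d-b\cdot d\in d_1\mathbb{Z}_{\geq 0}$. Equality $|a|=|b|$ forces each $c^{(\alpha,j)}$ to have $w$-degree $1$, which by compatibility means $c^{(\alpha,j)}=\bm{e}_\alpha$ and hence $a=b$; this completes part (1). For part (2), the same bookkeeping shows that $w^b$ is produced by exactly one choice, the all-$\bm{e}_\alpha$ selection, which contributes coefficient $1$, giving $\frac{\partial^b x^b}{\partial z^b}(q)=b!$.
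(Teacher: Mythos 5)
Your argument is correct and is essentially the paper's own proof in dual notation: the paper factors $x^b=x^{\beta_1}\cdots x^{\beta_{|b|}}$, applies the Leibniz rule to $\frac{\partial^a x^b}{\partial z^a}(q)$, and constrains each factor $\frac{\partial^{a(i)}x^{\beta_i}}{\partial z^{a(i)}}(q)$ by Lemma \ref{lem:van1} and compatibility, which is exactly your bookkeeping of Taylor monomials $w^{c^{(\alpha,j)}}$ of the factors $y^\alpha$ (the key weight estimate $c\cdot d=d_\alpha+kd_1$, $k\ge 0$, $|c|\ge 1$, and the identification of the $|c|=1$ terms as $\bm{e}_\alpha$ are the same steps). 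No gaps; the only cosmetic difference is that you obtain the congruence $a\cdot d\equiv b\cdot d$ by applying Lemma \ref{lem:van1}(2) to $F=x^b$ directly rather than summing the per-factor congruences.
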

\begin{proof} 
(1) If $b=0$, i.e. if $x^b$ is a constant, $\frac{\partial^a x^b}{\partial z^a}(q)\neq 0$
implies that $a=0$, and the claim holds.
For $b\neq 0$, $x^b$ is written in the following form:
$$
x^b=x^{\beta_1}x^{\beta_2}\cdots x^{\beta_{|b|}}.
$$
Here $d_{\beta_i}<d_1$ holds because we assumed $b\in \mathcal{Z}$.
Differentiating $x^b$ by $z^a$, we  have
\begin{equation}\label{eq:a-b}
\frac{\partial^a x^b}{\partial z^a}(q)=
\sum_{\begin{subarray}{c}
a(1),\ldots, a(|b|)\in \mathbb{Z}_{\geq 0}^n;\\
\\a=a(1)+\cdots+a(|b|)\end{subarray}}
\frac{a!}{a(1)!\cdots a(|b|)!}
\underbrace{\prod_{i=1}^{|b|} 
\frac{\partial^{a(i)}x^{\beta_i}}{\partial z^{a(i)}}(q)}_{(\star)}~.
\end{equation}
If $\frac{\partial x^b}{\partial z^a}(q)\neq 0$, then $(\star)\neq 0$ for at least one
tuple $(a(1),\ldots, a(|b|))$.
Then, by Lemma \ref{lem:van1}, 
\begin{equation}\label{temp}
a(i)\cdot d= d_{\beta_i}+k_id_1\quad  (k_i\in \mathbb{Z}) 
\end{equation}
holds for all $1\leq i\leq |b|$.
Given that $0<d_{\beta_i}<d_1$ and that $a(i)\cdot d\geq 0$,
$k_i$ cannot be negative. Moreover, $a(i)=0$ does not satisfy \eqref{temp}
since $d_{\beta_i}\not\equiv 0 \mod d_1$.
So $k_i\geq 0$ and $|a(i)|\geq 1$.
When $|a(i)|=1$, $\frac{\partial^{a(i)}x^{\beta_i}}{\partial z^{a(i)}}(q)\neq 0$
if  and only  if $\frac{\partial}{\partial z^{a(i)}}=\frac{\partial}{\partial  z^{\beta_i}}$ by the compatibility condition \eqref{compatible-x0}.
Therefore  if $(\star)\neq 0$,  we have
\begin{equation}\label{i-condition}
( |a(i)|\geq 2~~ \text{and} ~~
a(i)\cdot d=d_{\beta_i} + k_id_1~~(k_i\in \mathbb{Z}_{\geq 0}))\quad \text{or}\quad (|a(i)|=1 ~~\text{ and } ~~a(i)=\bm{e}_{\beta_i})
\end{equation}
for all $1\leq i\leq |b|$. 
Summing over $1\leq i\leq |b|$, we obtain
$$
|a|>|b|~~\text{and}~~ a\cdot d=b\cdot d+kd_1 ~(k\in \mathbb{Z}_{\geq 0})
\quad \text{or}\quad (|a|=|b|~~\text{and}~~a=b)~.
$$
This prove  the statement (1).
\\
(2) When $a=0$, the statement is clear.
When $a\neq 0$, 
 non-vanishing terms  in \eqref{eq:a-b} with $a=b\in \mathcal{Z}$ satisfy $a(i)=\bm{e}_{\beta_i}$
 for all $1\leq i\leq |b|$.
 There are $a_1! \cdots a_n!=a!$ such terms, and we have 
$$\frac{\partial x^a}{\partial z^a}(q)=
a!\prod_{\alpha=1}^n \left(\frac{\partial x^{\alpha}}{\partial z^{\alpha}}(q)
\right)^{a_{\alpha}}
=a!~
$$
by the compatibility condition \eqref{compatible-x0}.
\end{proof}

We will need the following lemma in \S \ref{sec:duality-groups}.
There we assume $\mathfrak{a}(d_1)=\mathfrak{b}(d_1)=1$
and in that case,  the assumption $z^{2}(q)=\cdots=z^n(q)=0$ holds true.
\begin{lemma}\label{compatible-x-form}
Let $(g,\zeta,q)$ be an admissible triplet of $G$
and let $z=(z^1,\ldots, z^n)$  be a $(g,\zeta)$-graded coordinate system
such that $z^{2}(q)=\cdots=z^n(q)=0$.
Then a set of basic invariants
$x=(x^1,\ldots, x^n)$ which is compatible at $q$ with $z$
takes the following form
\begin{equation}\nonumber
\begin{split}
x^1&=\frac{1}{d_1 (z^1(q))^{d_1-1}}(z^1)^{d_1}+
f_1~,
\\
x^{\alpha}&=
\frac{1}{ (z^1(q))^{d_{\alpha}-1}}
(z^1)^{d_{\alpha}-1} z^{\alpha}+f_{\alpha}
\quad(\alpha\neq 1),
\end{split}
\end{equation}
where 
$f_{\alpha} \in \mathbb{C}[z]$ is some polynomial whose degree in $z^1$ is 
less than $d_{\alpha}-1$.
As a consequence  we have
$$
x^{\alpha}(q)=\begin{cases}\frac{z^1(q)}{d_1}&(\alpha=1)\\0&(\alpha\neq 1)
\end{cases}~.
$$
\end{lemma}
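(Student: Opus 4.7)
The plan is to expand each $x^\alpha$ as a polynomial in $z$ and, using $g^*$-invariance together with the compatibility condition \eqref{compatible-x0}, to isolate the terms of highest degree in $z^1$. First I would observe that the hypothesis $z^2(q)=\cdots=z^n(q)=0$, combined with the regularity of $q$ and Lemma \ref{lem:van1}(1), forces $z^1(q)\neq 0$ and $\mathfrak{a}(d_1)=1$; in particular $d_1>d_\alpha$ for all $\alpha\geq 2$.

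Since $x^\alpha$ is a homogeneous $g^*$-invariant polynomial of degree $d_\alpha$, the identity $g^*z^\beta=\zeta^{d_\beta-1}z^\beta$ implies (as in the proof of Lemma \ref{dFdz}) that every monomial $z^b$ occurring in $x^\alpha$ satisfies $|b|=d_\alpha$ and $b\cdot d\equiv d_\alpha\pmod{d_1}$. I would then enumerate those monomials with $b_1\geq d_\alpha-1$: the case $b=d_\alpha\bm{e}_1$ imposes $d_\alpha\equiv 0\pmod{d_1}$ and so occurs only when $\alpha=1$; the case $b=(d_\alpha-1)\bm{e}_1+\bm{e}_\gamma$ imposes $d_\gamma\equiv d_\alpha\pmod{d_1}$ and, since $0<d_\gamma,d_\alpha\leq d_1$, requires $d_\gamma=d_\alpha$. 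This yields a decomposition $x^1=c_1(z^1)^{d_1}+f_1$ and, for $\alpha\neq 1$, $x^\alpha=\sum_{\gamma:\,d_\gamma=d_\alpha}c_\gamma (z^1)^{d_\alpha-1}z^\gamma+f_\alpha$, with $\deg_{z^1}(f_\alpha)<d_\alpha-1$. Crucially, every monomial $z^b$ appearing in $f_\alpha$ satisfies $\sum_{\gamma\neq 1}b_\gamma=d_\alpha-b_1\geq 2$, so after any single partial derivative $\frac{\partial}{\partial z^\beta}$ at least one factor $z^\gamma$ with $\gamma\neq 1$ persists, whence $\frac{\partial f_\alpha}{\partial z^\beta}(q)=0$ for every $\beta$.

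With this reduction, the compatibility condition becomes a system of linear equations in the visible coefficients $c_\gamma$. The equation $\frac{\partial x^1}{\partial z^1}(q)=1$ gives $c_1=\frac{1}{d_1(z^1(q))^{d_1-1}}$; for $\alpha\neq 1$, the equations $\frac{\partial x^\alpha}{\partial z^\beta}(q)=\delta^\alpha_\beta$ (with $\beta$ ranging over indices having $d_\beta=d_\alpha$) give $c_\alpha=\frac{1}{(z^1(q))^{d_\alpha-1}}$ and $c_\gamma=0$ for $\gamma\neq\alpha$ with $d_\gamma=d_\alpha$, matching the stated formulas. The asserted values of $x^\alpha(q)$ then follow, since for $\alpha\neq 1$ every term in $x^\alpha$ carries a factor $z^\gamma$ with $\gamma\neq 1$ and so vanishes at $q$, whereas for $\alpha=1$ only the pure power $c_1(z^1)^{d_1}$ survives and evaluates to $z^1(q)/d_1$. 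The main bookkeeping obstacle is the case of a repeated degree $d_\alpha$ (necessarily with $\alpha\neq 1$), where several candidate leading monomials $(z^1)^{d_\alpha-1}z^\gamma$ appear simultaneously and the compatibility condition must be invoked to force $c_\gamma=0$ for $\gamma\neq\alpha$.
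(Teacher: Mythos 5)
Your argument is, at its core, the same as the paper's: expand $x^{\alpha}$ by its degree in $z^1$, observe that every monomial of $z^1$-degree at most $d_{\alpha}-2$ carries at least two factors from $\{z^2,\ldots,z^n\}$ and hence has vanishing value and vanishing first derivatives at $q$ (a point you actually justify more explicitly than the paper does), and then read off the leading coefficients from the compatibility condition \eqref{compatible-x0}. The one genuine flaw is the opening inference that the hypothesis $z^2(q)=\cdots=z^n(q)=0$ forces $\mathfrak{a}(d_1)=1$. It does not: Lemma \ref{lem:van1}(1) only gives $z^1(q)\neq 0$, and when $\mathfrak{a}(d_1)>1$ (so $d_1=d_2$, as for $G_7$, $G_{11}$, $G_{19}$) one can still choose a $(g,\zeta)$-graded coordinate system with $q=q_1$ and $z^2(q)=\cdots=z^n(q)=0$, so the lemma's hypotheses are satisfiable without $\mathfrak{a}(d_1)=1$. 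This matters because your enumeration of monomials via $g$-invariance concludes that $b=d_{\alpha}\bm{e}_1$ ``occurs only when $\alpha=1$''; when $d_{\alpha}=d_1$ for some $\alpha\geq 2$ the congruence $d_{\alpha}\equiv 0 \pmod{d_1}$ is satisfied and the pure power $(z^1)^{d_{\alpha}}$ is \emph{not} excluded by invariance, so your displayed decomposition for $x^{\alpha}$, $\alpha\neq 1$, is missing a term in that degenerate case.

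The repair is immediate and is exactly what the paper does: drop the invariance step entirely, write $x^{\alpha}=A^{\alpha}(z^1)^{d_{\alpha}}+\sum_{\beta\geq 2}B^{\alpha}_{\beta}(z^1)^{d_{\alpha}-1}z^{\beta}+f_{\alpha}$ for \emph{every} $\alpha$, and note that the compatibility equation $\frac{\partial x^{\alpha}}{\partial z^1}(q)=d_{\alpha}A^{\alpha}(z^1(q))^{d_{\alpha}-1}=\delta^{\alpha}_1$ forces $A^{\alpha}=0$ for $\alpha\neq 1$ regardless of whether $d_{\alpha}=d_1$. In other words, the compatibility conditions you already invoke carry the whole argument; the invariance-based enumeration is both unnecessary and the sole source of the gap. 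With that adjustment your proof is complete and matches the paper's.
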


\begin{proof}
Let us write 
$$
x^{\alpha}=A^{\alpha} (z^1)^{d_{\alpha}}
+\sum_{\beta=2}^n B^{\alpha}_{\beta}(z^1)^{d_{\alpha}-1} z^{\beta}+f_{\alpha}
$$
where 
$f_{\alpha} \in \mathbb{C}[z]$ is some polynomial whose degree in $z^1$ is 
less than $d_{\alpha}-1$.

For $\alpha=1$,
 the compatibility condition means 
 $$
 \frac{\partial x^1}{\partial z^1}(q)=d_1A^1( z^1(q))^ {d_1-1}=1,\quad 
 \frac{\partial x^1}{\partial z^{\gamma}}(q)
 =B_{\gamma}^1( z^1(q))^ {d_1-1}=0.
 $$
Therefore $A^1=\frac{1}{d_1(z^1(q))^{d_1-1}}$ and $B_{\gamma}^1=0$.
 For $\alpha\neq 1$, 
 the compatibility condition means 
 $$
 \frac{\partial x^{\alpha}}{\partial z^1}(q)
 =d_{\alpha}A^{\alpha}( z^1(q))^ {d_{\alpha}-1}=0,\quad 
 \frac{\partial x^{\alpha}}{\partial z^{\gamma}}(q)
 =B_{\gamma}^{\alpha}( z^1(q))^ {d_{\alpha}-1}=\delta^{\alpha}_{\gamma}
 ~~(\gamma\neq 1).
 $$
 Therefore $A^{\alpha}=0$, $B_{\alpha}^{\alpha}=\frac{1}{(z^1(q))^{d_{\alpha}-1}}$
 and $B_{\gamma}^{\alpha}=0$ ($\gamma\neq \alpha$).
\end{proof}

\subsection{Good basic invariants}
\label{subsection-good}
In \cite[Definition 3.3]{Satake2020}, Satake defined a set of good basic invariants
using a graded ring isomorphism. 
His elegant definition makes it clear the existence and the uniqueness.
However in this article, we adopt another equivalent definition 
which Satake gave in his talk \cite{Satake2019}.
See \cite[Proposition 5.2]{Satake2020} for a proof of the equivalence.

\begin{definition}\label{def-good} 
A set of basic invariants $x=(x^1,\ldots, x^n)$
is {\it good} with respect to an admissible triplet $(g,\zeta,q)$
if
\begin{equation}\label{good0}
\frac{\partial^{a} x^{\alpha}}{\partial z^{a}}(q)
=0 \quad (1\leq \alpha\leq n,~~ a\in I_{\alpha}^{(0)}),
\end{equation}
where $z=(z^1,\ldots, z^n)$ is a $(g,\zeta)$-graded coordinate system of $V$.
\end{definition}

\begin{lemma}
The definition of  good basic invariants does not depend on the choice of
$(g,\zeta)$-graded coordinate system $z$.
\end{lemma}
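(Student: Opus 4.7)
The plan is to observe that any two $(g,\zeta)$-graded coordinate systems are related by a degree-preserving linear transformation, and that consequently higher derivatives transform in a manner that respects the weight $a\cdot d$, preserving the sets $I_\alpha^{(0)}$.

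First I would set up the change of coordinates. Suppose $z=(z^1,\ldots,z^n)$ and $\tilde z=(\tilde z^1,\ldots,\tilde z^n)$ are two $(g,\zeta)$-graded coordinate systems, and write $\tilde z^\alpha=\sum_\beta C^\alpha_\beta z^\beta$. The equivariance $g^*\tilde z^\alpha=\zeta^{d_\alpha-1}\tilde z^\alpha$ and $g^*z^\beta=\zeta^{d_\beta-1}z^\beta$ together force $C^\alpha_\beta=0$ whenever $\zeta^{d_\alpha-1}\neq \zeta^{d_\beta-1}$. Since $\zeta$ is a primitive $d_1$-th root of unity and $1\leq d_\gamma\leq d_1$ for every $\gamma$, this congruence reduces to $d_\alpha=d_\beta$. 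Hence the transition matrix $C=(C^\alpha_\beta)$, and likewise its inverse, is block-diagonal with blocks indexed by the common values of the degrees.

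Next I would invoke the chain rule to deduce
$$\frac{\partial}{\partial \tilde z^\alpha}=\sum_\beta (C^{-1})^\beta_\alpha\,\frac{\partial}{\partial z^\beta},$$
where $(C^{-1})^\beta_\alpha=0$ unless $d_\beta=d_\alpha$. Iterating and expanding, for each $a\in\mathbb{Z}^n_{\geq 0}$ one obtains an identity of the form
$$\frac{\partial^a}{\partial \tilde z^a}=\sum_{b}D_{a,b}\,\frac{\partial^b}{\partial z^b}$$
with constants $D_{a,b}$, where the sum runs over $b\in\mathbb{Z}^n_{\geq 0}$ satisfying $|b|=|a|$ and $b\cdot d=a\cdot d$. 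This is the crucial weight-preservation property.

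Finally, suppose that $x$ is good with respect to $(g,\zeta,q)$ using the coordinates $z$. For $a\in I_\alpha^{(0)}$, so $|a|\geq 2$ and $a\cdot d=d_\alpha$, every index $b$ appearing in the above expansion satisfies $|b|\geq 2$ and $b\cdot d=d_\alpha$, so $b\in I_\alpha^{(0)}$. Evaluating at $q$, the hypothesis on $x$ gives $\frac{\partial^b x^\alpha}{\partial z^b}(q)=0$ for each such $b$, and therefore $\frac{\partial^a x^\alpha}{\partial \tilde z^a}(q)=0$. Reversing the roles of $z$ and $\tilde z$ yields the converse, proving independence. The only step requiring real verification is the weight-preserving property of the derivative transformation, but this is an immediate consequence of the block-diagonal structure of $C$; no further subtlety arises.
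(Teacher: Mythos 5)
Your proof is correct and follows essentially the same route as the paper: both arguments rest on the observation that two $(g,\zeta)$-graded coordinate systems differ by a degree-preserving (block-diagonal) linear map, so the chain rule expands $\partial^a/\partial\tilde z^a$ over multi-indices $b$ with $|b|=|a|$ and $b\cdot d=a\cdot d$, which preserves membership in $I_\alpha^{(0)}$. Your extra remark that $\zeta^{d_\alpha-1}=\zeta^{d_\beta-1}$ forces $d_\alpha=d_\beta$ because all degrees lie in $[1,d_1]$ is a detail the paper leaves implicit, but the argument is the same.
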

\begin{proof}
Let $z=(z^1,\ldots, z^{n})$ and $\tilde{z}=(\tilde{z}^1,\ldots, \tilde{z}^n)$ be 
$(g,\zeta)$-graded coordinate systems.
Since these coordinate systems are associated to the  eigenspace decomposition
of $g$, 
each ${z}^{\alpha}$ is a linear combination of $\tilde{z}^{\beta}$'s  with $d_{\beta}=d_{\alpha}$.
The chain rule implies that
$$
\frac{\partial^b}{\partial {\tilde{z}}^{b}}=
\sum_{\begin{subarray}{c}
a\in \mathbb{Z}_{\geq 0}^n
\\
|a|=|b|,a\cdot d=b\cdot d\end{subarray}} B_{b,a}\frac{\partial^a }{\partial {z}^a} 
\quad (b\in \mathbb{Z}_{\geq 0}^n)
$$
holds with some constants $B_{b,a}\in \mathbb{C}$.
If $b\in  I_{\alpha}^{(0)}$, $a$'s appearing  in the sum also  belong  to $I_{\alpha}^{(0)}$.
Therefore, if \eqref{good0} holds, $\frac{\partial^b x^{\alpha}}{\partial \tilde{z}^b}(q)=0$
holds for $b\in I_{\alpha}^{(0)}$.
Thus if $x$
is good with respect to $z$,
then $x$ is also good with respect to $\tilde{z}$ and vice versa.
\end{proof}

\begin{theorem} \label{existence-good} 
A set of good basic invariants with respect to a given admissible triplet  exists.
\end{theorem}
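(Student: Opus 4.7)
The plan is to start from a set of compatible basic invariants and correct each component by adding $G$-invariant polynomials of the same degree that depend only on basic invariants of strictly smaller degree. First I would invoke Proposition \ref{uniqueness-compatible} to fix a set $x = (x^1, \ldots, x^n)$ of basic invariants compatible at $q$ with a chosen $(g,\zeta)$-graded coordinate system $z$. For each $\alpha$ and each $b \in I_\alpha^{(0)}$, observe that the monomial $x^b = (x^1)^{b_1} \cdots (x^n)^{b_n}$ is $G$-invariant and homogeneous of degree $b \cdot d = d_\alpha$; moreover the constraints $b \cdot d = d_\alpha$ and $|b| \ge 2$ force $b_\gamma = 0$ whenever $d_\gamma \ge d_\alpha$, so $x^b$ is a polynomial in $\{x^\gamma : d_\gamma < d_\alpha\}$ only.

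Next, I would seek constants $\{c^\alpha_b\}_{b \in I_\alpha^{(0)}}$ so that
$$\tilde{x}^\alpha := x^\alpha - \sum_{b \in I_\alpha^{(0)}} c^\alpha_b\, x^b$$
satisfies $\partial^a \tilde{x}^\alpha / \partial z^a(q) = 0$ for every $a \in I_\alpha^{(0)}$. This requirement becomes the finite linear system
$$\sum_{b \in I_\alpha^{(0)}} c^\alpha_b\, M_{a,b} = \frac{\partial^a x^\alpha}{\partial z^a}(q), \qquad a \in I_\alpha^{(0)},$$
where $M_{a,b} := \partial^a x^b/\partial z^a(q)$. Since $a, b \in I_\alpha^{(0)}$ both satisfy $a \cdot d = b \cdot d = d_\alpha < d_1$, Lemma \ref{a-b}(1) forces $M_{a,b} = 0$ whenever $a \ne b$ and $|a| \le |b|$, while Lemma \ref{a-b}(2) gives $M_{a,a} = a! \ne 0$. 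Ordering $I_\alpha^{(0)}$ by nondecreasing $|\cdot|$, the matrix is therefore lower triangular with nonzero diagonal, so the system admits a (unique) solution.

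It then remains to verify that $\tilde{x} = (\tilde{x}^1, \ldots, \tilde{x}^n)$ is still a set of basic invariants: since each correction lies in the subalgebra generated by $\{x^\gamma : d_\gamma < d_\alpha\}$, the transformation $x \mapsto \tilde{x}$ is lower triangular with respect to the degree filtration and hence preserves $S[V]^G$. The main technical point is the careful extraction of the triangular structure of $M_{a,b}$ from Lemma \ref{a-b}; once that is in place, the rest of the proof is a straightforward application of linear algebra on the finite index set $I_\alpha^{(0)}$.
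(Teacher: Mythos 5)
Your proposal is correct and follows essentially the same route as the paper: start from a compatible set of basic invariants, correct each $x^\alpha$ by a linear combination of the monomials $x^b$ with $b\in I_\alpha^{(0)}$, and use Lemma \ref{a-b} to see that the resulting linear system for the coefficients is triangular (nonzero off-diagonal entries only for $|a|>|b|$) with invertible diagonal $a!$, hence solvable. The paper phrases this as solving recursively from smaller $|a|$ to larger $|a|$, which is the same triangularity you extract.
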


\begin{proof}
Let $(g,\zeta,q)$ be an admissible triplet and 
let $z=(z^1,\ldots, z^n)$ be a $(g,\zeta)$-graded coordinate system. 
Take a set of (not necessarily good) basic invariants
$x=(x^1,\ldots, x^n)$ which is compatible with  $z$ at $q$.
Let us consider a set of basic invariants 
$y=(y^1,\ldots,y^n)$ written in the following form:
\begin{equation}\nonumber
y^{\alpha}=x^{\alpha}+\sum_{b\in I_{\alpha}^{(0)}}
B_{b}^{\alpha} x^{b} 
\quad (B_{b}^{\alpha}\in \mathbb{C})~.
\end{equation}
We show that there exists a set of coefficients $\{B_b^{\alpha}\mid a\in I_{\alpha}^{(0)}\}$ such that
$y$ is good.

Assume that $y^{\alpha}$ satisfies the goodness condition \eqref{good0}.
We differentiate the both sides by $z^a$ with $a\in I_{\alpha}^{(0)}$.
Then by Lemma \ref{a-b}, we have
\begin{equation}\nonumber
\begin{split}
0=\frac{\partial^a y^{\alpha}}{\partial z^a}(q)&=
\frac{\partial^a x^{\alpha}}{\partial z^a}(q)+
a! B_a^{\alpha} 
+\sum_{\begin{subarray}{c}b\in I_{\alpha}^{(0)};
\clubsuit
\end{subarray}}
B_b^{\alpha}\frac{\partial x^b}{\partial z^a}(q)~.
\end{split}
\end{equation}
Here $\clubsuit$ is the condition
$$
(\clubsuit)\quad 
|b|<|a| \text{ and }
b\cdot d\equiv a\cdot d\mod d_1 \text{ and } b\cdot d\leq a\cdot d~.
$$
When $|a|=2$,  $b\in I_{\alpha}^{(0)}$ satisfying $\clubsuit$ does not exist 
since $b\in I_{\alpha}^{(0)}$ implies $|b|\geq 2$.
So  we obtain
$$
B_a^{\alpha}=-\frac{1}{a!}\frac{\partial^a x^{\alpha}}{\partial z^a}(q)~.
$$
When $|a|>2$, we have
\begin{equation}\nonumber
B_a^{\alpha} 
=-\frac{1}{a!}\left(\frac{\partial^a x^{\alpha}}{\partial z^a}(q)
+\sum_{\begin{subarray}{c}b\in I_{\alpha}^{(0)};\clubsuit\end{subarray}}
B_b^{\alpha}\frac{\partial x^b}{\partial z^a}(q)
\right)~.
\end{equation}
Thus we can solve this system of linear equations
from smaller $|a|$ to  larger $|a|$ and obtain 
a unique solution $\{B_a^{\alpha}\mid a\in I_{\alpha}^{(0)}\}$.
\end{proof}

To show the uniqueness, we need the following improved version of Lemma \ref{a-b}. 
While $k\geq 0$ in 
Lemma \ref{a-b},  we have $k\geq 1$ in the next lemma
due to the goodness assumption on $x$.
\begin{lemma}\label{a-b-improved}
Let $x$ be a set of basic invariants
which is good with respect to an admissible triplet $(g,\zeta,q)$
and is 
compatible at $q$ with a $(g,\zeta)$-graded coordinate system $z$ at $q$.
For $a\in \mathbb{Z}_{\geq 0}^n$ and $b \in \mathcal{Z}$, the following holds.
$$
\text{If }~~\frac{\partial^a x^{b}}{\partial z^a}(q)\neq 0, ~~{ then }~~
\begin{cases} \text{either}~~
(|a|>|b|\text{ and }a\cdot d=b\cdot d+kd_1 ~~(k\in\mathbb{Z}_{\geq 1}))
\\ \text{ or }~~a=b
\end{cases}~.
$$ 
\end{lemma}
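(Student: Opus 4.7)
The plan is to follow the proof of Lemma \ref{a-b} essentially verbatim and see exactly where the goodness hypothesis upgrades the bound $k \in \mathbb{Z}_{\geq 0}$ to $k \in \mathbb{Z}_{\geq 1}$. For $b \in \mathcal{Z}$ with $b \neq 0$, I would write $x^{b} = x^{\beta_{1}} \cdots x^{\beta_{|b|}}$ with each $d_{\beta_{i}} < d_{1}$ (this uses $b \in \mathcal{Z}$) and expand $\frac{\partial^{a} x^{b}}{\partial z^{a}}(q)$ by the Leibniz rule exactly as in \eqref{eq:a-b}, writing it as a sum over decompositions $a = a(1) + \cdots + a(|b|)$ of products $\prod_{i} \frac{\partial^{a(i)} x^{\beta_{i}}}{\partial z^{a(i)}}(q)$. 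Non-vanishing of such a product already forces \eqref{i-condition}: for each $1 \leq i \leq |b|$, either $a(i) = \bm{e}_{\beta_{i}}$ (contributing $k_{i} = 0$ and $|a(i)| = 1$), or $|a(i)| \geq 2$ and $a(i) \cdot d = d_{\beta_{i}} + k_{i} d_{1}$ for some $k_{i} \in \mathbb{Z}_{\geq 0}$.

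The new ingredient is the goodness hypothesis. In the second alternative above, suppose $k_{i} = 0$; then $a(i) \cdot d = d_{\beta_{i}}$ and $|a(i)| \geq 2$, so $a(i) \in I_{\beta_{i}}^{(0)}$ as defined in \eqref{defIalpha}, and condition \eqref{good0} yields $\frac{\partial^{a(i)} x^{\beta_{i}}}{\partial z^{a(i)}}(q) = 0$, contradicting the non-vanishing of the product. Therefore, whenever $|a(i)| \geq 2$ we must in fact have $k_{i} \geq 1$.

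Finally I would sum over $i$. Using $a = \sum_{i} a(i)$ we get $a \cdot d = \sum_{i} a(i) \cdot d = b \cdot d + \bigl( \sum_{i} k_{i} \bigr) d_{1}$, so $k = \sum_{i} k_{i}$, and $|a| = \sum_{i} |a(i)| \geq |b|$ with equality precisely when every $a(i) = \bm{e}_{\beta_{i}}$, i.e.\ when $a = b$. Outside that case some index satisfies $|a(i)| \geq 2$, hence $k_{i} \geq 1$ for that index, giving $k \geq 1$ as required. The corner case $b = 0$ is immediate, since then $x^{b}$ is a constant and $\frac{\partial^{a} x^{b}}{\partial z^{a}}(q) \neq 0$ forces $a = 0 = b$.

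I do not anticipate any real obstacle: the argument is bookkeeping built on top of Lemma \ref{a-b}, and the compatibility condition \eqref{compatible-x0} is used only indirectly, through the $|a(i)| = 1$ case of \eqref{i-condition} which was already established there. The one point worth checking carefully is that the factor-wise application of goodness is legitimate, namely that $\{c \in \mathbb{Z}_{\geq 0}^{n} \mid c \cdot d = d_{\beta_{i}},\, |c| \geq 2\}$ really coincides with $I_{\beta_{i}}^{(0)}$, so that \eqref{good0} can be invoked for each factor $\frac{\partial^{a(i)} x^{\beta_{i}}}{\partial z^{a(i)}}(q)$ independently.
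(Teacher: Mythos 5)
Your proof is correct and is essentially the paper's own argument: the paper likewise reruns the Leibniz expansion from Lemma \ref{a-b} and observes that goodness rules out $k_i=0$ in the $|a(i)|\geq 2$ branch of \eqref{i-condition}, since such an $a(i)$ would lie in $I_{\beta_i}^{(0)}$. Your careful check that $\{c\mid c\cdot d=d_{\beta_i},\,|c|\geq 2\}=I_{\beta_i}^{(0)}$ and your handling of the $b=0$ case are exactly the right points to verify.
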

\begin{proof}
The proof is almost the same as that of Lemma \ref{a-b}.
Taking the goodness of $x$ into consideration,  
the condition for $k_i$ in 
\eqref{i-condition} becomes  $k_i\in \mathbb{Z}_{\geq 1}$.
Summing over $1\leq i\leq |b|$, we obtain the statement.
\end{proof}

\begin{theorem} \label{uniqueness-good} 
 If  both $x=(x^1,\ldots,x^n)$ and $y=(y^1,\ldots,y^n)$ are sets of good basic invariants
with respect to the same admissible triplet,
then
$$
y^{\alpha}=\sum_{\begin{subarray}{r}1\leq \beta\leq n;\\
 d_{\beta}=d_{\alpha}\end{subarray}} 
 A^{\alpha}_{\beta} x^{\beta} \quad 
(1\leq \alpha\leq n)
$$
where $A^{\alpha}_{\beta}$ are some constants.
In other words,
if $x$  and $y$ are sets of good basic invariants
with respect to the same admissible triplet,
they span the same vector subspace of $S[V]^G$.
\end{theorem}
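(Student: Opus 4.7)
The plan is to reduce to the case where $x$ is simultaneously good and compatible with a fixed $(g,\zeta)$-graded coordinate system $z$ at $q$, and then isolate the higher-order coefficients in the expansion of $y^\alpha$ in the $x^\beta$'s by differentiating at $q$ with multi-indices drawn from $I_\alpha^{(0)}$.

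First I would fix such a coordinate system $z$ and observe that any block-diagonal linear change $\tilde{x}^{\alpha}=\sum_{\beta:\,d_\beta=d_\alpha} M^\alpha_\beta\, x^\beta$ preserves goodness, because $d_\beta=d_\alpha$ implies $I_\beta^{(0)}=I_\alpha^{(0)}$ and therefore each summand $M^\alpha_\beta\,\frac{\partial^a x^\beta}{\partial z^a}(q)$ with $a\in I_\alpha^{(0)}$ vanishes by the goodness of $x$. Applying Proposition \ref{uniqueness-compatible}, whose construction proceeds precisely by such a block-diagonal change (the Jacobian at the regular point $q$ is block-diagonal by Lemma \ref{lem:van1}), I may assume that $x$ is both good and compatible with $z$ at $q$. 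Since this modification does not change the linear span $\sum_{\beta:\,d_\beta=d_\alpha}\mathbb{C}\,x^\beta$ for any $\alpha$, the statement of the theorem is unaffected.

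Next, since $y^\alpha\in S[V]^G$ has degree $d_\alpha$ and $x$ generates $S[V]^G$, sorting the expansion of $y^\alpha$ by $|b|$ would give
$$y^\alpha = \sum_{\beta:\,d_\beta=d_\alpha} A^\alpha_\beta\, x^\beta \;+\; \sum_{b\in I_\alpha^{(0)}} C^\alpha_b\, x^b,$$
the first sum coming from the $|b|=1$ terms (necessarily $b=\bm{e}_\beta$ with $d_\beta=d_\alpha$) and the second from the $|b|\geq 2$ terms with $b\cdot d=d_\alpha$, which by definition form $I_\alpha^{(0)}$. The theorem then reduces to showing $C^\alpha_b=0$ for every $b\in I_\alpha^{(0)}$.

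To do this I would fix $a\in I_\alpha^{(0)}$, apply $\partial^a/\partial z^a$ to the expansion and evaluate at $q$. The left-hand side vanishes by goodness of $y$; the first sum on the right vanishes term-by-term by goodness of $x$ (again using $I_\beta^{(0)}=I_\alpha^{(0)}$ when $d_\beta=d_\alpha$); and in the second sum, Lemma \ref{a-b-improved}—applicable because $x$ is now both good and compatible and because $I_\alpha^{(0)}\subset\mathcal{Z}$—forces each nonzero contribution to satisfy either $a=b$ or $a\cdot d=b\cdot d+kd_1$ with $k\geq 1$. The latter is impossible since $a,b\in I_\alpha^{(0)}$ gives $a\cdot d=b\cdot d=d_\alpha$, so only the $b=a$ term survives; Lemma \ref{a-b}(2) then supplies $\frac{\partial^a x^a}{\partial z^a}(q)=a!\neq 0$, whence $C^\alpha_a=0$. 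The main obstacle I anticipate is the compatibility reduction at the outset: without it, Lemma \ref{a-b-improved} cannot be invoked, and the crucial sharpening from $k\geq 0$ in Lemma \ref{a-b} to $k\geq 1$—which is exactly what eliminates cross-contributions between distinct elements of $I_\alpha^{(0)}$ and distinguishes this uniqueness argument from the existence proof of Theorem \ref{existence-good}—would be unavailable.
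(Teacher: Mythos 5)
Your proposal is correct and follows essentially the same route as the paper: reduce to the case where $x$ is simultaneously good and compatible (via the block-diagonal change of Proposition \ref{uniqueness-compatible}, which preserves both goodness and the degree-wise spans), expand $y^{\alpha}$ into a linear part plus a sum over $I_{\alpha}^{(0)}$, and kill the higher coefficients by differentiating with $a\in I_{\alpha}^{(0)}$ at $q$ using Lemma \ref{a-b-improved} and Lemma \ref{a-b}(2). Your explicit verification that the $k\geq 1$ alternative in Lemma \ref{a-b-improved} is incompatible with $a\cdot d=b\cdot d=d_{\alpha}$ is exactly the step the paper leaves implicit.
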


\begin{proof}
Let $(g,\zeta,q)$ be an admissible triplet
and let $x=(x^1,\ldots,x^n)$  be a set  of good basic invariants.
Take a  $(g,\zeta)$-graded coordinate system $z=(z^1,\ldots, z^n)$
and construct from $x$ a  set of  basic invariants 
$\tilde{x}=(\tilde{x}^1,\ldots,\tilde{x}^n)$  which is compatible with
$z$ at $q$, 
by the method of
the  proof of Proposition \ref{uniqueness-compatible}.
Then clearly $\tilde{x}^{\alpha}$'s are linear combinations of $x^{\beta}$'s with
$d_{\beta}=d_{\alpha}$.
Therefore as $x$ is good, so is $\tilde{x}$.

Let $y=(y^1,\ldots, y^n)$ be another set of good basic invariants.
By the degree consideration, each $y^{\alpha}$  is expressed as
$$
y^{\alpha}=\sum_{\begin{subarray}{r}1\leq \beta\leq n;\\
 d_{\beta}=d_{\alpha}\end{subarray}} 
 A^{\alpha}_{\beta}\tilde{x}^{\beta}+
\sum_{b\in I_{\alpha}^{(0)}} B^{\alpha}_b \tilde{x}^b~.
$$
Let us differentiate the equation by $z^a$ $(a\in I_{\alpha}^{(0)})$
and evaluate at $q$. 
The LHS 
and the first term in the RHS vanish because $y$ and $\tilde{x}$ are good .
The second term in the RHS 
 does not vanish only when $a=b$ by Lemma \ref{a-b-improved}.
Therefore we have
$$
0=B_a^{\alpha}\frac{\partial \tilde{x}^a}{\partial z^a}(q)=
a! B_a^{\alpha}$$
by Lemma \ref{a-b}.
 So we have  $B_a^{\alpha}=0 $ for all $a\in I_{\alpha}^{(0)}$ and 
 $$
 y^{\alpha}=
 \sum_{\begin{subarray}{r}1\leq \beta\leq n;\\
 d_{\beta}=d_{\alpha}\end{subarray}}
  A^{\alpha}_{\beta}\tilde{x}^{\beta}~.
 $$
 Since $\tilde{x}^{\beta}$'s  are  linear combinations of $x^{\gamma}$'s
 with the same degree, so are
 $y^{\alpha}$'s. 
 \end{proof}
 
\begin{remark}
Theorem \ref{uniqueness-good}  implies that
the goodness condition determines the subspace $\mathcal{I}$ of $S[V]^G$ spanned by good basic invariants. On the other hand, imposing the compatibility with a $(g,\zeta)$-graded coordinates on good basic invariants 
corresponds to choosing a basis of $\mathcal{I}$.
\end{remark}

The  next   lemma  is an improved version  of 
Lemma  \ref{lem:van1} \eqref{van1-1}.   We  will  use the lemma in \S \ref{sec:duality-groups}.
\begin{lemma}\label{x-derivatives}
Let $x$ be a set of basic invariants
which is good with respect to an admissible triplet $(g,\zeta,q)$
and which is compatible with  a $(g,\zeta)$-graded coordinate system $z$ at $q$.
For $a\in \mathbb{Z}_{\geq 0}^n$ with $|a|\geq 2$ and 
$1\leq\beta\leq n$, the  following holds.
$$
\text{If }~~\frac{\partial^a x^{\beta}}{\partial z^a}(q)\neq 0, ~~{ then }~~
a\cdot d=d_{\beta}+kd_1 ~~(k\in\mathbb{Z}_{\geq 1})~.
$$
\end{lemma}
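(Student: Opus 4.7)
The plan is to start from the weaker congruence already established in Lemma \ref{lem:van1}\eqref{van1-1} and upgrade the lower bound on $k$ from $k \in \mathbb{Z}$ to $k \geq 1$, using the goodness hypothesis to rule out the value $k=0$. Specifically, applying Lemma \ref{lem:van1}\eqref{van1-1} to a pair $(a,\beta)$ with $\frac{\partial^a x^{\beta}}{\partial z^a}(q)\neq 0$ immediately gives $a\cdot d = d_{\beta} + kd_1$ for some $k\in\mathbb{Z}$, so the whole task is to pin down the sign of $k$.

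First I would eliminate the case $k\leq -1$. Since $a\cdot d\geq 0$ and $d_\beta\leq d_1$, the relation $kd_1 = a\cdot d - d_\beta \geq -d_1$ forces $k\geq -1$, and $k=-1$ would entail $a\cdot d=d_\beta-d_1\leq 0$, hence $a\cdot d=0$; as every $d_\gamma>0$, this gives $a=0$, contradicting $|a|\geq 2$. So $k\geq 0$.

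Next I would exclude $k=0$, which is where the goodness of $x$ is used. If $k=0$, then $a\cdot d = d_\beta$ together with $|a|\geq 2$ matches precisely the definition \eqref{defIalpha} of $I_\beta^{(0)}$; the observation recorded right after \eqref{defIalpha} (or a direct check: any $a_\gamma>0$ with $d_\gamma\geq d_1\geq d_\beta$ would force $a_\gamma=1$ and all other coordinates to vanish, contradicting $|a|\geq 2$) shows furthermore that $a\in\mathcal{Z}$. But then Definition \ref{def-good} applied to the good basic invariants $x$ gives $\frac{\partial^a x^\beta}{\partial z^a}(q)=0$, contradicting the hypothesis. Hence $k\geq 1$, which is the desired conclusion.

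There is essentially no obstacle here: once Lemma \ref{lem:van1}\eqref{van1-1} and Definition \ref{def-good} are in hand, the proof is a short two-case elimination. The only subtle point is verifying that $a\cdot d=d_\beta$ with $|a|\geq 2$ really places $a$ inside $I_\beta^{(0)}$ even when $\beta\leq\mathfrak{a}(d_1)$ (so $d_\beta=d_1$); this is exactly the membership in $\mathcal{Z}$ argued above, and is the one line where a little care is needed.
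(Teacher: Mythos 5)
Your proof is correct and follows essentially the same route as the paper's: invoke Lemma \ref{lem:van1}\eqref{van1-1} to get $a\cdot d=d_{\beta}+kd_1$ with $k\in\mathbb{Z}$, rule out $k<0$ because $|a|\geq 2$ forces $a\cdot d>0$, and rule out $k=0$ because then $a\in I_{\beta}^{(0)}$ and goodness kills the derivative. (The extra care about $a\in\mathcal{Z}$ is harmless but unnecessary, since membership in $I_{\beta}^{(0)}$ as defined in \eqref{defIalpha} requires only $a\cdot d=d_{\beta}$ and $|a|\geq 2$.)
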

\begin{proof}
By Lemma \ref{lem:van1} \eqref{van1-1}, 
$\displaystyle{
\frac{\partial^a x^{\beta}}{\partial z^a}(q)\neq 0
}$
implies   $a\cdot d=d_{\beta}+kd_1$ $(k\in \mathbb{Z})$.  The integer $k$ cannot be  negative since otherwise  $a\cdot  d\leq 0$, contradicting the assumption $|a|\geq 2$. 
Moreover $k$ cannot be  zero since otherwise $a$ belongs to $I_{\beta}^{(0)}$,  contradicting to
the assumption that $x$ is good.
\end{proof}
\subsection{Independence of good basic invariants 
of the choice of admissible triplet 
when $\mathfrak{a}(d_1)=\mathfrak{b}(d_1)=1$ }
In this subsection, we assume that the finite complex reflection group $G$ satisfies $\mathfrak{a}(d_1)=\mathfrak{b}(d_1)=1$ (i.e. $d_1>d_2$).
The assumption implies that the $\zeta$-eigenspace of $g$ is one-dimensional
for any admissible triplet $(g,\zeta,q)$.
(see Theorem \ref{maximal-eigenspace} (iii)).

\begin{proposition}\label{indep}
Let $G$ be a  finite complex reflection group with $\mathfrak{a}(d_1)=\mathfrak{b}(d_1)=1$.
Then, 
if a set of basic invariants $x$ is good with respect to 
an admissible triplet of $G$,  then  $x$ is good with respect to
any admissible triplet of $G$.
\end{proposition}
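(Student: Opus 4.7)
The plan is to show that any two admissible triplets are connected by a sequence of three elementary moves, each of which preserves the goodness condition: (A) rescaling, $(g,\zeta,q) \leadsto (g,\zeta,\lambda q)$ for $\lambda \in \mathbb{C}^*$; (B) simultaneous conjugation, $(g,\zeta,q) \leadsto (hgh^{-1}, \zeta, hq)$ for $h \in G$; and (C) raising to a coprime power, $(g,\zeta,q) \leadsto (g^k, \zeta^k, q)$ for $k$ coprime to $d_1$. Given two admissible triplets $(g,\zeta,q)$ and $(g',\zeta',q')$, I would write $\zeta'=\zeta^k$ with $\gcd(k,d_1)=1$ and apply (C) to obtain $(g^k,\zeta',q)$; by Remark \ref{remark:conjugacy} there exists $h\in G$ with $hg^kh^{-1}=g'$, and (B) gives $(g',\zeta',hq)$. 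Since $\mathfrak{a}(d_1)=\mathfrak{b}(d_1)=1$, the eigenspace $V(g',\zeta')$ is one-dimensional by Theorem \ref{maximal-eigenspace}(iii), so $q'$ is a scalar multiple of $hq$ and (A) finishes the reduction.

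The first two moves are immediate. For (A), $\frac{\partial^a x^\alpha}{\partial z^a}$ is a homogeneous polynomial of degree $d_\alpha-|a|$, so its vanishing at $q$ implies its vanishing at $\lambda q$. For (C), the same coord system $z$ is already a $(g^k,\zeta^k)$-graded coord system because $(g^k)^*z^\alpha=(\zeta^k)^{d_\alpha-1}z^\alpha$, and the index sets $I_\alpha^{(0)}$ defined in \eqref{defIalpha} depend only on the degrees of $G$, so the goodness condition is literally unchanged.

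The main step, and what I expect to be the main obstacle, is (B). Given a $(g,\zeta)$-graded coord system $z$, I would define $\tilde{z}^\alpha:=h^*z^\alpha$ and verify via $(\phi\psi)^*=\phi^*\psi^*$ that $(hgh^{-1})^*\tilde{z}^\alpha=\zeta^{d_\alpha-1}\tilde{z}^\alpha$, so $\tilde{z}$ is a $(hgh^{-1},\zeta)$-graded coord system. The crucial observation is that $h^*x^\alpha=x^\alpha$ by $G$-invariance, which implies that the polynomial expression of $x^\alpha$ in $\tilde{z}$-coordinates coincides with its expression in $z$-coordinates; a direct chain-rule computation then yields
\begin{equation*}
\frac{\partial^a x^\alpha}{\partial \tilde{z}^a}(hq)=\frac{\partial^a x^\alpha}{\partial z^a}(q),
\end{equation*}
so the goodness condition at $(hgh^{-1},\zeta,hq)$ is identical to that at $(g,\zeta,q)$. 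The delicate point is the bookkeeping between the $G$-action on $V$ and the induced change of coordinates and derivatives, and the hypothesis $\mathfrak{a}(d_1)=1$ is exactly what makes the one-dimensional reduction in step (A) well-defined at the end.
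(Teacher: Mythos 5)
Your proposal is correct and is essentially the paper's own proof: the paper likewise passes from $(g,\zeta,q)$ to $(hg^jh^{-1},\zeta^j,hq)$ using conjugacy of $\zeta^j$-regular elements and the pullback coordinates $\tilde z^\alpha=h^*z^\alpha$ together with $h^*x^\alpha=x^\alpha$, and then uses $\mathfrak{a}(d_1)=1$ to identify $hq$ with a scalar multiple of the target regular vector. Your only departure is organizational — factoring the argument into the three explicit moves (A), (B), (C) rather than performing them in one chain — which changes nothing of substance.
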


\begin{proof}
Let $(g,\zeta,q)$ and $(\tilde{g},\tilde{\zeta},\tilde{q})$ be admissible triplets of $G$.
Since both $\zeta$ and $\tilde{\zeta}$ are primitive $d_1$-th roots of unity,
$\tilde{\zeta}=\zeta^j$ for some integer $j$.
Then $g^j$ is $\tilde{\zeta}$-regular with $g^j q=\tilde{\zeta}q$.
Since all $\tilde{\zeta}$-regular elements are conjugate in $G$ (Remark \ref{remark:conjugacy}), 
there exists $h\in G$ such that
$\tilde{g}=hg^j h^{-1}$. Then we have
$$
\tilde{g}(hq)=h g^j q=\tilde{\zeta} (hq)~.
$$
Given that $\mathfrak{a}(d_1)=1$, the $\tilde{\zeta}$-eigenspace of $\tilde{g}$ is one-dimensional.
So $hq$ is a nonzero scalar multiple of  $\tilde{q}$.
Therefore the goodness with respect to $(\tilde{g},\tilde{\zeta},\tilde{q})$
is equivalent to   that with respect to $(\tilde{g},\tilde{\zeta},hq)$.

Now consider the isomorphism $h:V\to V$ which sends 
$v\in V$ to $hv$.
It we
take a $(g,\zeta)$-graded coordinate system $z=(z^1,\ldots,z^n)$ of $V$,
then  $\tilde{z}^{\alpha}:=h^*z^{\alpha}$ 
is a $(\tilde{g},\tilde{\zeta})$-graded coordinate system
since
$$
\tilde{g}^*\tilde{z}^{\alpha}=\tilde{g}^*h^*z^{\alpha}
=h^*(g^j)^*z^{\alpha}=h^* (\zeta^{d_{\alpha}-1})^j z^{\alpha}
=\tilde{\zeta}^{d_{\alpha}-1} \tilde{z}^{\alpha}~.
$$
Therefore
$$
\frac{\partial {x}^{\alpha}}{\partial {z}^a}({q})=0~~(a\in I_{\alpha}^{(0)})
\Longleftrightarrow
\frac{\partial (h^*{x}^{\alpha})}{\partial \tilde{z}^a}(hq)=0~~(a\in I_{\alpha}^{(0)})~.
$$
Notice that $h^*x^{\alpha}=x^{\alpha}$ holds since $x^{\alpha}$ is $G$-invariant.
Thus this implies that if $x$ is good with respect to $(g,\zeta,q)$,
$h^*x=x$  is good with respect to ($\tilde{g},\tilde{\zeta},hq)$.
\end{proof}

Theorem \ref{uniqueness-good}  and Proposition \ref{indep} imply the following
\begin{theorem}\label{unique}
For a finite complex reflection group $G$ with $\mathfrak{a}(d_1)=\mathfrak{b}(d_1)=1$,
the vector subspace $\mathcal{I}$ in $S[V]^G$ spanned by a set of good basic invariants
is uniquely determined by $G$.
\end{theorem}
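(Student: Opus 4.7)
The plan is to derive Theorem \ref{unique} as an almost immediate consequence of Proposition \ref{indep} and Theorem \ref{uniqueness-good}, by eliminating in turn the two apparent sources of freedom in the construction of $\mathcal{I}$: the choice of a system of good basic invariants and the choice of admissible triplet.

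First I would fix an admissible triplet $(g,\zeta,q)$ of $G$, which exists because the hypothesis $\mathfrak{a}(d_1)=\mathfrak{b}(d_1)=1$ combined with Theorem \ref{thm:criteria} guarantees a $d_1$-regular element, and a regular vector $q\in V(g,\zeta)$ can then be chosen using Theorem \ref{maximal-eigenspace}. By Theorem \ref{existence-good}, select a set $x=(x^1,\ldots,x^n)$ of good basic invariants with respect to $(g,\zeta,q)$, and let $\mathcal{I}\subset S[V]^G$ denote the $\mathbb{C}$-vector subspace spanned by $x^1,\ldots,x^n$. Next, let $(\tilde{g},\tilde{\zeta},\tilde{q})$ be any admissible triplet and $y=(y^1,\ldots,y^n)$ any set of good basic invariants with respect to it. Applying Proposition \ref{indep} to $y$, which is good with respect to $(\tilde{g},\tilde{\zeta},\tilde{q})$, we conclude that $y$ is also good with respect to the originally fixed triplet $(g,\zeta,q)$. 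Then Theorem \ref{uniqueness-good} forces each $y^{\alpha}$ to be a linear combination of the $x^{\beta}$ with $d_{\beta}=d_{\alpha}$, whence $\mathrm{span}(y^1,\ldots,y^n)=\mathrm{span}(x^1,\ldots,x^n)=\mathcal{I}$. Since $(\tilde{g},\tilde{\zeta},\tilde{q})$ and $y$ were arbitrary, $\mathcal{I}$ depends only on $G$.

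There is essentially no obstacle beyond correctly chaining the two previous results; the genuine work has already been accomplished in the proofs of Proposition \ref{indep} and Theorem \ref{uniqueness-good}. The point worth flagging explicitly is precisely where the hypothesis $\mathfrak{a}(d_1)=\mathfrak{b}(d_1)=1$ enters: it is used (through Theorem \ref{maximal-eigenspace}(iii)) to make $V(g,\zeta)$ one-dimensional, which is what allows Proposition \ref{indep} to transport goodness between distinct admissible triplets via a single group element $h\in G$ conjugating $g^j$ to $\tilde{g}$ and sending $q$ to a scalar multiple of $\tilde{q}$. Without this one-dimensionality the image $hq$ would only lie in $V(\tilde{g},\tilde{\zeta})$ rather than being proportional to $\tilde{q}$, so $\mathcal{I}$ could a priori depend on the triplet; the cases $\mathfrak{a}(d_1)=2$ are handled separately in Remark \ref{ad=2}.
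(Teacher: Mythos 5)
Your argument is correct and is exactly the paper's proof: the paper states Theorem \ref{unique} as an immediate consequence of Theorem \ref{uniqueness-good} and Proposition \ref{indep}, which is precisely the chaining you carry out (transport goodness of an arbitrary $y$ to the fixed triplet via Proposition \ref{indep}, then conclude equality of spans via Theorem \ref{uniqueness-good}). Your closing remark on where the hypothesis $\mathfrak{a}(d_1)=\mathfrak{b}(d_1)=1$ enters also matches the paper's discussion.
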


\begin{remark}\label{ad=2}
As for the case
 $\mathfrak{a}(d_1)=\mathfrak{b}(d_1)>1$, the same statement holds if
 $G$ is irreducible.
Indeed, irreducible finite complex reflection group with
 $\mathfrak{a}(d_1)=\mathfrak{b}(d_1)>1$
 are $G_7,G_{11},G_{19}$ (see \S \ref{sec:admissible-triplet}).
These groups has rank two and $d_1=d_2$.
Therefore the goodness condition \eqref{good0} is empty and
any set of basic invariants is good with respect to any admissible triplet.
 Thus the vector subspace $\mathcal{I}$ in $S[V]^G$ spanned by a set of good basic invariants is nothing but the degree $d_1(=d_2)$ part of $S[V]^G$.
For these groups any $d_1$-regular element $g$ 
is equal to $\zeta I$ with some primitive $d_1$-th root of unity $\zeta$,
and $(g,\zeta)$ together with any regular vector $q$ form an admissible triplet.
\end{remark}

\section{Introduction for the second part}
\label{second-part}
In the remaining part of this article, we study the relationship between
the good basic invariants and the natural Saito structure for duality groups 
studied in \cite{KatoManoSekiguchi2015} \cite{Arsie-Lorenzoni2016} \cite{KMS2018}.
In this section, we recall necessary definitions and properties  from \cite{KMS2018}.
To prevent  the article becoming too  lengthy,  we keep the explanation to a bare minimum.
For the  definition of Saito structure and  the construction of
natural Saito structure, see loc.cit.

Let $G$ be a finite complex reflection group acting on $V=\mathbb{C}^n$.
Let  $z$ be a linear coordinate system of $V$ and let
$x$ be a set of basic invariants.

As in \S \ref{sec:graded-coordinates},  we put
\begin{equation}\label{defJ}
J^{\gamma}_{\beta}=\frac{\partial x^{\gamma}}{\partial z^{\beta}}~,\quad
\tilde{J}^{\gamma}_{ \beta}=\frac{\partial z^{\gamma}}{\partial x^{\beta}}\quad
(1\leq \beta,\gamma\leq n)~.
\end{equation}
$J^{\gamma}_{\beta}$ is a homogeneous polynomial in $z$ while
$\tilde{J}^{\gamma}_{\beta}$ is a homogeneous rational function  in $z$ having poles at reflection hyperplanes.
 The $n\times n$ matrix 
whose $(\gamma,\beta)$-entries are $J^{\gamma}_{\beta}$ (resp. $\tilde{J}^{\gamma}_{\beta}$) is denoted $J$ (resp. $\tilde{J}$).
$J$ is the Jacobian matrix of $x$ and 
$\tilde{J}$ is  the inverse matrix  of $J$.
We also put
\begin{equation}\nonumber 
\Omega_{\alpha\beta}^{\gamma}=-\sum_{\mu,\nu=1}^n
\frac{\partial z^{\mu}}{\partial x^{\alpha}}
\frac{\partial z^{\nu}}{\partial x^{\beta}}
\frac{\partial^2 x^{\gamma}}{\partial z^{\mu}\partial z^{\nu}}
\quad (1\leq \alpha,\beta,\gamma\leq n)~.
\end{equation}
Each $\Omega_{\alpha\beta}^{\gamma}$ is  a homogeneous $G$-invariant 
rational function having poles at reflection hyperplanes.
The $n\times n$ matrix whose $(\gamma,\beta)$-entries are $\Omega_{\alpha\beta}^{\gamma}$
is denoted $\Omega_{\alpha}$.
$\Omega_{\alpha}$ is the local connection form 
of the trivial connection on $TV^{\circ}$, with respect  to the local frame
$\{\frac{\partial}{\partial x^{\alpha}}\}$,
where $V^{\circ}$ is the complement of all reflection hyperplanes in $V$.
Given that 
$$
\sum_{\alpha=1}^n z^{\alpha}\frac{\partial }{\partial z^{\alpha}}
=\sum_{\alpha=1}^n d_{\alpha}x^{\alpha}\frac{\partial }{\partial x^{\alpha}}
$$
acts as  a degree operator on $\mathbb{C}[z]$,  we have
\begin{equation}\label{Omega}
\sum_{\alpha=1}^n d_{\alpha}x^{\alpha}\Omega_{\alpha}
=\mathrm{diag}(1-d_1,\ldots,1-d_n)~.
\end{equation}
The discriminant polynomial  of $G$ is a $G$-invariant homogeneous polynomial  defined by
\begin{equation}\label{defDelta}
\Delta=\prod_{H\in \mathcal{A}} L_H^{e_H} \in  \mathbb{C}[x]~.
\end{equation}
The degree of $\Delta$ is $\sum_{\alpha=1}^n (d_{\alpha}+d_{\alpha}^*)$
\cite[Theorem 6.42, Corollary 6.63]{OrlikTerao}.
In \cite[Proposition 6.1]{KMS2018}, it is shown that
$$\Delta \Omega_{\alpha\beta}^{\gamma} \in \mathbb{C}[x]~,
\quad
\mathrm{deg}\, \Delta\Omega_{\alpha\beta}^{\gamma}=\mathrm{deg}\Delta+d_{\gamma}-d_{\alpha}-d_{\beta}~.
$$

If $G$ is a duality group, i.e.  if $G$ is 
an irreducible finite complex reflection group  satisfying the relation 
$d_{\alpha}+d_{\alpha}^*=d_1$ $(1\leq \alpha\leq n)$, 
then $\mathfrak{a}(d_1)=\mathfrak{b}(d_1)=1$, and hence $d_1>d_2$.
We write $\mathbb{C}[x']$ for $\mathbb{C}[x^2,\ldots, x^n]$.
It is known that
$\mathrm{deg}\Delta=nd_1$ and 
the coefficient of $(x^1)^{n}$ in $\Delta$ is 
a non-zero constant \cite{OrlikSolomon}\cite{Bessis2015}.
Multiplying by a nonzero constant if necessary,
we assume that the coefficient is one.
Since $\mathrm{deg}\,\Delta \Omega_{\alpha}<(n+1)d_1$, its degree in $x^1$ is at most $n$.
We write  
$\Gamma_{\alpha\beta}^{\gamma}\in \mathbb{C}[x']$ for
the coefficient of $(x^1)^n$
and $D_{\alpha\beta}^{\gamma}\in \mathbb{C}[x']$ for
the coefficient of $(x^1)^{n-1}$:
\begin{equation}\nonumber 
\begin{split}
\Delta \Omega_{\alpha\beta}^{\gamma}&
=(x^1)^n\Gamma_{\alpha\beta}^{\gamma} +(x^1)^{n-1} D_{\alpha\beta}^{\gamma}
+\cdots~.
\end{split}\end{equation}
Then
$\mathrm{deg}\,\Gamma_{\alpha\beta}^{\gamma}=d_{\gamma}-d_{\alpha}-d_{\beta}$
and  we have
\begin{equation}\label{D1-Gamma}
D_{1\beta}^{\gamma}=\frac{1-d_{\gamma}}{d_1}\delta^{\gamma}_{\beta}
-\sum_{\alpha=2}^n \frac{d_{\alpha}}{d_1}x^{\alpha}\Gamma_{\alpha\beta}^{\gamma}
\end{equation}
by \eqref{Omega}.
For a duality group, the matrix $\Omega_1$ is invertible and its inverse $\Omega_1^{-1}=\tilde{\Omega}^{\gamma}_{\beta}$  satisfies
\begin{equation}\label{invOmega}
\tilde{\Omega}-D_1^{-1}x^1\in M(n,\mathbb{C}[x']), \quad\deg \tilde{\Omega}^{\gamma}_{\beta}=
d_1+d_{\gamma}-d_{\beta}~.
\end{equation}
See \cite[Proposition 6.1, Lemma 7.2]{KMS2018}.
Moreover
\begin{equation}\label{def-C}
C_{\alpha}:=\Omega_1^{-1}(\Omega_{\alpha}-\Gamma_{\alpha})\in M(n,\mathbb{C}[x']),
\quad
\mathrm{deg}\,C_{\alpha\beta}^{\gamma}=d_1+d_{\gamma}-d_{\alpha}-d_{\beta}~.
\end{equation}
When we take an admissible triplet $(g,\zeta,q)$,
we can take a $(g,\zeta)$-graded coordinate system as $z$.
We can take as $x$
a set of good basic invariants which is compatible with $z$ at $q$. 
Then we have $\Gamma_{\alpha\beta}^{\gamma}=0$
(Theorem \ref{good-implies-flat}).
Moreover  coefficients of $C_{\alpha\beta}^{\gamma}$ are expressed as
derivatives of $x^{\gamma}$  by $z$.
(Theorem \ref{main-theorem2}).
This is a generalization of Satake's result \cite{Satake2020}.
From the viewpoint of the natural Saito structure studied in \cite{KMS2018},
$\Gamma_{\alpha}$ and 
$C_{\alpha}$ are matrix expressions of its affine connection and multiplication
with respect to the  frame $\{\frac{\partial}{\partial x^{\alpha}}\}$.
Especially, that $\Gamma_{\alpha\beta}^{\gamma}=0$ implies that
a set of good basic invariants is a set of flat invariants.

If $G$ is an irreducible finite Coxeter group,  (then $G$ is a duality group), 
the degrees of $G$ satisfy the following relations:
\begin{equation}\nonumber 
d_1>d_2,\quad d_n=2,\quad 
d_{\alpha}+d_{n+1-\alpha}=d_1+2\quad (1\leq \alpha\leq n)~.
\end{equation}
Moreover,  there exists a 
a nondegenerate $G$-invariant symmetric bilinear form
$\langle -,-\rangle$
on $V$. Let us call a nondegenerate symmetric bilinear form a metric for short.
The induced  $G$-invariant metric $\langle-,-\rangle^*$ on $V^*$
 is canonically extended to a metric on $TV^*$ by the canonical identification
$V^*\cong TV^*$.
For the $1$-forms $dx^{\alpha},dx^{\beta}$ ($1\leq \alpha,\beta\leq n$), we put
\begin{equation}\nonumber
\tilde{\omega}^{\alpha\beta}=\langle dx^{\alpha},dx^{\beta}\rangle^*.
\end{equation}
This is a $G$-invariant homogeneous polynomial of degree $d_{\alpha}+d_{\beta}-2<2d_1$.
So  we can write it as follows.
\begin{equation}\nonumber
\tilde{\omega}^{\alpha\beta}=\tilde{\eta}^{\alpha\beta}x^1+\upsilon^{\alpha\beta},
\quad
\tilde{\eta}^{\alpha\beta},\upsilon^{\alpha\beta}\in \mathbb{C}[x']~.
\end{equation}
The matrix $\tilde{\eta}=(\tilde{\eta}^{\alpha\beta})$ is nondegenerate \cite[Corollary to (1.11)]{SaitoSekiguchiYano}.  
Let us  set
$$
\eta=\tilde{\eta}^{-1},\quad 
\eta=(\eta_{\alpha\beta})~.
$$
In \cite[Theorem 5.5]{KM2020},  it was proved that this  metric $\eta$ is compatible with the natural Saito structure mentioned in the previous paragraph.
This means that  $C_{\alpha\beta}^{\gamma}$'s  defined by \eqref{def-C} has the following property :
\begin{equation}\label{C-symmetry}
\sum_{\lambda=1}^n C_{\alpha\beta}^{\lambda}\eta_{\lambda \gamma}
=\sum_{\lambda=1}^n C_{\alpha\gamma}^{\lambda}\eta_{\lambda\beta}
\quad (1\leq \alpha,\beta,\gamma\leq n)~.
\end{equation}
This symmetry implies the existence of the potential function \cite{Dubrovin1993-2}.
We obtain an expression of the potential function (Theorem \ref{thm:potential}).
In \cite[(6.36)]{Satake2020}, Satake obtained an expression of $C_{\alpha\beta}^{\gamma}$ for irreducible  finite Coxeter groups.
Using \eqref{C-symmetry}, we  show that  
our expression in Theorem \ref{main-theorem2} agrees with 
his result (Theorem \ref{C-Satake}).

\section{Preliminary II}
\label{preliminary-vanishing}
In this section, we  do not  need the setting of finite  reflection  groups. So we just assume that
$d_1>d_2\geq \ldots \geq d_n\geq 2$ are  integers, 
$q\in \mathbb{C}^n$,
and that $z^1,\ldots,z^n$ are linear coordinates  of $\mathbb{C}^n$.
$M(n,\mathbb{C}(z))_q $ denotes the set of $n\times n$ matrices whose  entries  
are rational  functions in $z$ with  denominators which do not vanish at $q$.

\begin{definition}\label{def-condition-PQ}
Let $i=0,1$ and let $h\in \mathbb{Z}_{\geq 1}$.
\begin{itemize}
\item We say that $X=(X^{\gamma}_{\beta})\in M(n,\mathbb{C}(z))_q$ satisfies the condition $P(i)$
 if  $X(q)$ is diagonal and  if the following holds
for all $1\leq \beta,\gamma\leq n$ and all $a\in \mathbb{Z}_{\geq 0}^n\setminus\{0\}$:
$$
\text{if}~~\frac{\partial^a X^{\gamma}_{\beta}}{\partial z^a}(q)\neq 0
~~\text{then}~~
a\cdot d+d_{\beta}=d_{\gamma}+kd_1 ~~(k\in \mathbb{Z}_{\geq i})~.
$$
\item We say that  $X=(X^{\gamma}_{\beta})\in M(n,\mathbb{C}(z))_q$ satisfies the condition $Q(i,h)$
if
  the following holds
for all $1\leq \beta,\gamma\leq n$ and all $a\in \mathbb{Z}_{\geq 0}^n$:
$$
\text{if}~~\frac{\partial^a X^{\gamma}_{\beta}}{\partial z^a}(q)\neq 0
~~\text{then}~~
a\cdot d+h+d_{\beta}=d_{\gamma}+kd_1 ~~(k\in \mathbb{Z}_{\geq i})~.
$$
\end{itemize}
\end{definition}

\begin{lemma}\label{product-vanishing}
\begin{enumerate}
\item \label{product-vanishing-1}
Let $i=0,1$.
If $X,Y\in M(n,\mathbb{C}(z))_q$ satisfy $P(i)$, then the product $XY$ satisfies $P(i)$.
\item \label{product-vanishing-2}
Let $i,j=0,1$ and $h\in \mathbb{Z}_{\geq 1}$.
If $X\in M(n,\mathbb{C}(z))_q$ satisfies $P(i)$
and $Z\in M(n,\mathbb{C}(z))_q$ satisfies $Q(j,h)$,
the product $XZ$ satisfies $Q(j,h)$.
\item 
Let $h\in \mathbb{Z}_{\geq 1}$ and $1\leq \beta,\gamma\leq n$.
If $X\in M(n,\mathbb{C}(z))_q$ satisfies $P(1)$,
$Z\in M(n,\mathbb{C}(z))_q$ satisfies $Q(1,h)$,
and $a\in \mathbb{Z}_{\geq 0}^n$ satisfies
$a\cdot d+h+d_{\beta}=d_1+d_{\gamma}$,
then it holds that
$$
\frac{\partial^a }{\partial z^a}(XZ)(q)
=X(q)\frac{\partial^a Z}{\partial  z^a}(q)~.
$$
\item 
Let $i,j=0,1$ and $h\in \mathbb{Z}_{\geq 1}$.
If $X,Y\in M(n,\mathbb{C}(z))_q$ satisfy $P(i)$
and $Z\in M(n,\mathbb{C}(z))_q$ satisfies $Q(j,h)$,
the product $XZY$ satisfies $Q(j,h)$.
\item
Let $h\in \mathbb{Z}_{\geq 1}$  and $1\leq \beta,\gamma\leq n$.
If $X,Y\in M(n,\mathbb{C}(z))_q$ satisfy $P(1)$,
$Z\in M(n,\mathbb{C}(z))_q$ satisfies $Q(1,h)$,
and $a\in \mathbb{Z}_{\geq 0}^n$ satisfies
$a\cdot d+h+d_{\beta}=d_1+d_{\gamma}$,
then it holds that
$$
\frac{\partial^a }{\partial z^a}(XZY)(q)
=X(q)\frac{\partial^a Z}{\partial  z^a}(q)
Y(q)~.
$$
\end{enumerate}
\end{lemma}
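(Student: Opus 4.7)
The unifying tool is the multivariable Leibniz rule applied at $q$. For matrix products we write $(XY)^{\gamma}_{\beta}=\sum_{\mu}X^{\gamma}_{\mu}Y^{\mu}_{\beta}$, but whenever a factor satisfies $P(i)$ it is diagonal, so the sum collapses: $(XY)^{\gamma}_{\beta}=X^{\gamma}_{\gamma}Y^{\gamma}_{\beta}\delta^{\gamma}_{\beta}$ in (1), $(XZ)^{\gamma}_{\beta}=X^{\gamma}_{\gamma}Z^{\gamma}_{\beta}$ in (2) and (3), and $(XZY)^{\gamma}_{\beta}=X^{\gamma}_{\gamma}Z^{\gamma}_{\beta}Y^{\beta}_{\beta}$ in (4) and (5). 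This reduces every assertion to bookkeeping of scalar Leibniz decompositions $a=a(1)+\cdots +a(r)$ together with the arithmetic identities furnished by $P(i)$ and $Q(j,h)$.

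For (1), (2), (4) the argument is purely additive. Suppose the derivative of the product does not vanish at $q$; then for at least one multi-index decomposition each individual factor has a non-vanishing derivative at $q$. Each such factor contributes an identity of the form $a(\ell)\cdot d=k_{\ell}d_1$ (for a $P(i)$-factor, with $k_{\ell}\geq i$ if $a(\ell)\ne 0$, else $k_{\ell}=0$) or $a(\ell)\cdot d+h+d_{\beta}=d_{\gamma}+k_{\ell}d_1$ (for the $Q(j,h)$-factor, with $k_{\ell}\geq j$). Adding these identities and using that the sum of non-negative $k$'s is at least the largest guaranteed lower bound among the summands yields exactly the required condition $P(i)$ or $Q(j,h)$ on the product. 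The diagonality needed for $P(i)$ in (1) is automatic since the product of diagonal matrices is diagonal.

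For (3) and (5), the point is a vanishing argument: in the Leibniz expansion for $\tfrac{\partial^{a}}{\partial z^{a}}(XZ)(q)$ or $\tfrac{\partial^{a}}{\partial z^{a}}(XZY)(q)$ the only surviving term is the one that puts all derivatives on $Z$. Indeed, assuming $a(1)\ne 0$ in (3) forces $a(1)\cdot d\geq d_1$ by $P(1)$, while the $Z$-factor's non-vanishing forces $a(2)\cdot d=d_{\gamma}+k_2 d_1-h-d_{\beta}$ with $k_2\geq 1$; adding and comparing with the hypothesis $a\cdot d=d_1+d_{\gamma}-h-d_{\beta}$ gives $k_2\leq 0$, a contradiction. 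The same accounting in (5) rules out $a(1)\ne 0$ or $a(3)\ne 0$ (here one uses in addition the trivial bound $a(2)\cdot d\geq 0$ when both outer multi-indices are non-zero). After the vanishing step, the unique surviving term is $X(q)\tfrac{\partial^{a}Z}{\partial z^{a}}(q)$ or $X(q)\tfrac{\partial^{a}Z}{\partial z^{a}}(q)Y(q)$, which is the claimed identity.

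The only step that takes any care is the contradiction in (3) and (5): one must keep track that the strictness $k\geq 1$ in $P(1)$ and $Q(1,h)$ is what makes the inequality go in the right direction, whereas the weaker $k\geq 0$ versions $P(0)$, $Q(0,h)$ are too lax for this vanishing. Once this is internalized, (3) and (5) follow from a single inequality each, and the closure statements (1), (2), (4) follow from a uniform Leibniz-plus-addition computation.
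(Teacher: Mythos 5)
Your overall strategy --- Leibniz expansion at $q$, adding the arithmetic identities contributed by each factor, and using the strictness $k\geq 1$ to kill the cross terms in (3) and (5) --- is the same as the paper's. But there is a genuine error at the very first step: you assert that a matrix satisfying $P(i)$ is diagonal as a matrix of functions, so that the products collapse to $(XZ)^{\gamma}_{\beta}=X^{\gamma}_{\gamma}Z^{\gamma}_{\beta}$ and $(XZY)^{\gamma}_{\beta}=X^{\gamma}_{\gamma}Z^{\gamma}_{\beta}Y^{\beta}_{\beta}$. The condition $P(i)$ only requires diagonality \emph{at $q$}: this is how the paper reads it (in Lemma \ref{inverse-vanishing} only $X(q)$ and $\tilde X(q)$ are said to be diagonal), and it is forced by the intended application, since the lemma is later applied to the Jacobian matrices $J$ and $\tilde J$, which are certainly not diagonal matrices of rational functions --- only $J(q)=\tilde J(q)=I$ holds. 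Consequently the sum over the intermediate index $\rho$ in $(XZ)^{\gamma}_{\beta}=\sum_{\rho}X^{\gamma}_{\rho}Z^{\rho}_{\beta}$ does not collapse, and the terms with $\rho\neq\gamma$ genuinely contribute to derivatives at $q$ as soon as at least one derivative falls on $X$.

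This error propagates into your arithmetic. You record the $P(i)$-contribution as $a(\ell)\cdot d=k_{\ell}d_1$, which is the condition for a diagonal entry only; for an off-diagonal entry $X^{\gamma}_{\rho}$ it reads $a(\ell)\cdot d+d_{\rho}=d_{\gamma}+k_{\ell}d_1$, and in particular the inequality ``$a(1)\neq 0$ forces $a(1)\cdot d\geq d_1$'' that you use in (3) and (5) fails when $d_{\gamma}<d_{\rho}$. The repair is routine and brings you back to the paper's computation: keep the sum over $\rho$, split off only the terms in which no derivative falls on a $P(i)$-factor (there, and only there, diagonality of $X(q)$ and $Y(q)$ collapses the sum), and observe that when you add the two conditions $b\cdot d+d_{\rho}=d_{\gamma}+k_1d_1$ and $c\cdot d+h+d_{\beta}=d_{\rho}+k_2d_1$ the intermediate degree $d_{\rho}$ cancels, giving $a\cdot d+h+d_{\beta}=d_{\gamma}+(k_1+k_2)d_1$ with $k_1+k_2\geq i+j$; for (3) and (5) the bound $k_1+k_2\geq 2$ is what contradicts $k=1$ and kills every term carrying a derivative of $X$ or $Y$. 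As written, though, your proof rests on a false premise about the shape of the matrices involved.
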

\begin{proof}
We show (2)(3).  The proofs of (1)(4)(5) are similar and omitted.
\\
(2) For $a\in \mathbb{Z}_{\geq 0}^n$, we have
\begin{equation}\nonumber
\frac{\partial^a}{\partial z^a}(XZ)^{\gamma}_{\beta}(q)
=\sum_{\rho=1}^n 
\sum_{\begin{subarray}{c}a=b+c\\b\neq 0\end{subarray}}
\frac{a!}{b!c!}
\underbrace{\frac{\partial^b X^{\gamma}_{\rho}}{\partial z^b}(q)\cdot
\frac{\partial^c Z^{\rho}_{\beta}}{\partial z^c}(q)}_{(\star)}
+X^{\gamma}_{\gamma}(q) \underbrace{\frac{\partial^a Z^{\gamma}_{\beta}}{\partial z^a}(q)}_{(\star\star)}~.
\end{equation}
If $(\star)\neq 0$, 
$$
b\cdot d+d_{\rho}=d_{\gamma}+k_1 d_1~~(k_1\in \mathbb{Z}_{\geq i})
,\quad 
c\cdot d+h+d_{\beta}=d_{\rho}+k_2 d_1~~(k_2\in \mathbb{Z}_{\geq j})
$$
since $X$ and $Z$ satisfy $P(i)$ and $Q(j,h)$ respectively.
Taking the sum of these two equations, we obtain
\begin{equation}\label{vanishing-star1}
a\cdot d+h+d_{\beta}=d_{\gamma}+kd_1~~(k\in \mathbb{Z}_{\geq i+j})~.
\end{equation}
If $(\star\star)\neq 0$, we have
$$
a\cdot d+h+d_{\beta}=d_{\gamma}+kd_1~~(k\in \mathbb{Z}_{\geq j})~.
$$
If $\frac{\partial^a}{\partial z^a}(XZ)^{\gamma}_{\beta}(q)\neq 0$,
then at least one of $(\star)$'s is not zero, or $(\star\star)\neq 0$.
This implies that 
$$
a\cdot d+h+d_{\beta}=d_{\gamma}+kd_1~~(k\in \mathbb{Z}_{\geq j})
$$
since $\mathbb{Z}_{\geq j}\supseteq \mathbb{Z}_{\geq i+j}$.
\\
(3) When $i=j=1$,  \eqref{vanishing-star1} means that 
$(\star)=0$ if $a\cdot d+h+d_{\beta}=d_{\gamma}+d_1$.
\end{proof}

\begin{lemma}\label{inverse-vanishing}
Assume that $X\in M(n,\mathbb{C}(z))_q$ is invertible in a neighborhood of $q\in V$.
We set $\tilde{X}=X^{-1}$.
\begin{enumerate}
\item Let $i=0,1$.
If $X$ satisfies $P(i)$,  So is $\tilde{X}$.
\item
If $X$ satisfies $P(1)$  and 
$a\in  \mathbb{Z}_{\geq 0}^n\setminus\{0\}$ satisfies $a\cdot d+d_{\beta}=d_1+d_{\gamma}$, it holds that
\begin{equation} \nonumber 
\frac{\partial \tilde{X}^{\gamma}_{\beta}}{\partial z^{a}}(q)
=-\frac{1}{X^{\gamma}_{\gamma}(q)X^{\beta}_{\beta}(q)}
\frac{\partial X^{\gamma}_{\beta}}{\partial z^{a}}(q)
\quad (1\leq \beta,\gamma\leq n)~.
\end{equation}
\end{enumerate}
\end{lemma}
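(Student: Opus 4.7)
The plan is to expand the identity $X\tilde X = I$ via the Leibniz rule, solve recursively for the derivatives of $\tilde X$ at $q$, and then do the arithmetic bookkeeping forced by $P(i)$. Since $X$ is invertible near $q$, the matrix $X(q)$ is invertible, and the diagonal clause of $P(i)$ forces $X(q)$ to be a diagonal matrix; hence $\tilde X(q) = X(q)^{-1}$ is also diagonal with $\tilde X^{\gamma}_{\gamma}(q) = 1/X^{\gamma}_{\gamma}(q)$, while $\tilde X^{\gamma}_{\rho}(q)=0$ for $\rho\neq\gamma$. This fact handles the base case of the induction in part (1) and supplies the scalar factors in part (2). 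For $a\in\mathbb{Z}_{\geq 0}^n\setminus\{0\}$, applying $\partial^a/\partial z^a$ to $X\tilde X=I$, isolating the $b=0$ Leibniz term, multiplying by $\tilde X$ on the left, evaluating at $q$ and using the diagonality of $\tilde X(q)$ to collapse the outer row index $\rho\to\gamma$ yields
\begin{equation*}
\frac{\partial^a \tilde X^{\gamma}_{\beta}}{\partial z^a}(q) = -\tilde X^{\gamma}_{\gamma}(q)\sum_{\sigma=1}^n\sum_{\substack{b+c=a\\ b\neq 0}}\binom{a}{b}\frac{\partial^b X^{\gamma}_{\sigma}}{\partial z^b}(q)\,\frac{\partial^c \tilde X^{\sigma}_{\beta}}{\partial z^c}(q).
\end{equation*}

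For part (1), I would induct on $|a|$. For any nonvanishing summand above, $P(i)$ applied to $X$ yields $b\cdot d + d_{\sigma} = d_{\gamma} + k_1 d_1$ with $k_1\geq i$. Two sub-cases arise: if $c=0$ then diagonality of $\tilde X(q)$ forces $\sigma = \beta$, so automatically $b=a$ and the relation reads $a\cdot d + d_{\beta} = d_{\gamma} + k_1 d_1$; if $c\neq 0$ the inductive hypothesis gives $c\cdot d + d_{\beta} = d_{\sigma} + k_2 d_1$ with $k_2\geq i$, and summing with the previous relation eliminates $d_{\sigma}$ to produce $a\cdot d + d_{\beta} = d_{\gamma} + (k_1+k_2)d_1$ with $k_1+k_2 \geq 2i\geq i$. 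In either case the $P(i)$ condition for $\tilde X$ at multi-order $a$ is satisfied, closing the induction.

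For part (2), specialize to $P(1)$ and $a\cdot d + d_{\beta} = d_1 + d_{\gamma}$ (i.e.\ $k=1$). Any contribution with $c\neq 0$ would then force $k_1+k_2 = 1$ with $k_1,k_2\geq 1$, which is impossible; hence only the $c=0$ case survives, which pins $\sigma=\beta$ and $b=a$. The recursion therefore collapses to
\begin{equation*}
\frac{\partial^a \tilde X^{\gamma}_{\beta}}{\partial z^a}(q) = -\tilde X^{\gamma}_{\gamma}(q)\,\frac{\partial^a X^{\gamma}_{\beta}}{\partial z^a}(q)\,\tilde X^{\beta}_{\beta}(q) = -\frac{1}{X^{\gamma}_{\gamma}(q)X^{\beta}_{\beta}(q)}\frac{\partial^a X^{\gamma}_{\beta}}{\partial z^a}(q),
\end{equation*}
which is the claimed identity and is meaningful precisely in the off-diagonal case $\beta\neq\gamma$, where $X^{\gamma}_{\beta}$ need not vanish as a function even though $X^{\gamma}_{\beta}(q)=0$. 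The main obstacle is cleanly separating the $c=0$ branch, which receives no inductive input and relies only on diagonality of $\tilde X(q)$, from the $c\neq 0$ branch, which invokes the inductive $P(i)$ on $\tilde X$; the strict inequality $k\geq 1$ in $P(1)$ is what rules out all $c\neq 0$ contributions in part (2) and isolates the scalar formula with denominator $X^{\gamma}_{\gamma}(q)X^{\beta}_{\beta}(q)$.
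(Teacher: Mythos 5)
Your proof is correct and follows essentially the same route as the paper: Leibniz expansion of the inverse identity, induction on $|a|$ using the diagonality of $X(q)$ and $\tilde{X}(q)$, the additivity $k_1+k_2\geq 2i$ for the cross terms, and the observation that $k_1+k_2=1$ with $k_1,k_2\geq 1$ is impossible to isolate the formula in part (2). The only (immaterial) difference is that you differentiate $X\tilde{X}=I$ and solve by left-multiplying with $\tilde{X}(q)$, whereas the paper differentiates $\tilde{X}X=I$ and keeps the top-order term $\frac{\partial^a \tilde{X}^{\gamma}_{\beta}}{\partial z^a}(q)\,X^{\beta}_{\beta}(q)$ on the left-hand side.
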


\begin{proof}
(1)
Since $X(q)$ is diagonal,  it is clear that the inverse matrix $\tilde{X}(q)$
is also diagonal.
So we show the following statement by  induction on $|a|$.
$$
\text{If}~~\frac{\partial^a \tilde{X}^{\gamma}_{\beta}}{\partial z^a}
(q)\neq 0
~~\text{then}~~
a\cdot d+d_{\beta}=d_{\gamma}+kd_1\quad (k\in \mathbb{Z}_{\geq i}),
$$
where $a\in \mathbb{Z}_{\geq 0}^n\setminus\{0\}$.
 
First we show the statement for $|a|=1$, i.e. $a=\bm{e}_{\alpha}$ for some $1\leq \alpha\leq n$.
Differentiating by $z^{\alpha}$ the equation
\begin{equation}\label{inverse-X}
\sum_{\rho=1}^n \tilde{X}^{\gamma}_{\rho} X^{\rho}_{\beta}=\delta^{\gamma}_{\beta}
\end{equation}
and evaluating at $q$,
we obtain 
$$
\frac{\partial \tilde{X}^{\gamma}_{\beta}}{\partial z^{\alpha}}(q)X^{\beta}_{\beta}(q)
+\tilde{X}^{\gamma}_{\gamma}(q)\frac{\partial X^{\gamma}_{\beta}}{\partial z^{\alpha}}(q)
=0~.$$
Therefore 
$\frac{\partial \tilde{X}^{\gamma}_{\beta}}{\partial z^{\alpha}}(q)\neq 0$
implies $\frac{\partial X^{\gamma}_{\beta}}{\partial z^{\alpha}}(q)\neq 0$.
Thus the statement holds when $|a|=1$. 
 
Next assume that the statement holds for 
$1\leq |a|<m$ and take $a\in \mathbb{Z}_{\geq 0}^n\setminus\{0\}$
with $|a|=m$.
Differentiating \eqref{inverse-X} by $z^a$ and evaluating at $q$, we obtain
$$
\frac{\partial^a \tilde{X}^{\gamma}_{\beta}}{\partial z^a}(q) X^{\beta}_{\beta}(q)=-
\sum_{\rho=1}^n 
\sum_{\begin{subarray}{c}a=b+c\\b\neq 0\\c\neq 0\end{subarray}}
\frac{a!}{b!c!}
\underbrace{\frac{\partial^b \tilde{X}^{\gamma}_{\rho}}{\partial z^b}(q)
\frac{\partial^c {X}^{\rho}_{\beta}}{\partial z^c}(q)}_{(\star)}
-\tilde{X}^{\gamma}_{\gamma}(q)\underbrace{\frac{\partial^a X^{\gamma}_{\beta}}{\partial z^a}(q)}_{(\star\star)}
~.
$$
If $(\star)\neq 0$, we have
\begin{equation}\nonumber
b\cdot d+d_{\rho}=d_{\gamma}+k_1d_1~~(k_1\in \mathbb{Z}_{\geq i}),
\quad
c\cdot d+d_{\beta}=d_{\rho}+k_2d_1~~(k_2\in \mathbb{Z}_{\geq i}),
\end{equation}
by the assumption of the induction and the assumption on $X$.
Summing up these two equations, we have
\begin{equation}\label{vanishing-star2}
a\cdot d+d_{\beta}=d_{\gamma}+kd_1~~(k\in \mathbb{Z}_{\geq 2i})~.
\end{equation}
If $(\star\star)\neq 0$, we have
$$
a\cdot d+d_{\beta}=d_{\gamma}+kd_1~~(k\in \mathbb{Z}_{\geq i})~,
$$
by the assumption on $X$.
$\frac{\partial^a \tilde{X}^{\gamma}_{\beta}}{\partial z^a}(q)\neq 0$
implies that 
at least one of $(\star)$'s is not zero or
$(\star\star)\neq 0$.
Given that $\mathbb{Z}_{\geq 2i}\subseteq \mathbb{Z}_{\geq i}$,
we obtain the statement for $|a|=m$.
\\
(2) Eq.\eqref{vanishing-star2} means that 
$(\star)=0$ if $i=1$ and if  $a\cdot d+d_{\beta}=d_{\gamma}+d_1$.
\end{proof}

\begin{remark}\label{mathcalZ}
The statements of Lemma \ref{product-vanishing} and Lemma \ref{inverse-vanishing} hold if we replace
``$a\in \mathbb{Z}_{\geq 0}^n$'' and ``$a\in \mathbb{Z}_{\geq 0}^n\setminus\{0\}$''  
by ``$a\in\mathcal{Z}$'' and $a\in \mathcal{Z}\setminus\{0\}$
in Definition \ref{def-condition-PQ}, Lemma \ref{product-vanishing} and Lemma \ref{inverse-vanishing}.
\end{remark}

\section{Good basic invariants and natural Saito structures}
\label{sec:duality-groups}
In this section,  $G$  is a  duality  group,
$x$ is a  set  of good basic invariants with  respect to an admissible triplet
$(g,\zeta,q)$ and $z$  is a $(g,\zeta)$-graded coordinate  system of  $V$.
It is also assumed that $x$ is compatible with $z$ at $q$.

\subsection{Vanishing Properties of  $J,\tilde{J}$, $\Omega_{\alpha}$} 
\label{vanishings}
First
we apply 
Lemma \ref{inverse-vanishing} to the Jacobian matrix $J=(\frac{\partial x^{\gamma}}{\partial z^{\beta}})$ 
and study the vanishing at $q$  of  derivatives of the inverse  matrix $\tilde{J}$.
\begin{lemma}\label{lemma2}
\begin{enumerate}
\item For $1\leq \beta,\gamma\leq n$,
$$
J^{\gamma}_{\beta}(q)=\tilde{J}^{\gamma}_{\beta}(q)=\delta^{\gamma}_{\beta}~.$$
\item
 For $1\leq \beta,\gamma\leq n$ and 
for $a\in \mathbb{Z}_{\geq 0}^n\setminus\{0\}$, 
$$
\text{if }~~\frac{\partial^aJ^{\gamma}_{\beta}}{\partial z^a} (q)
\neq 0 \quad \text{then}\quad
a\cdot d+d_{\beta}=d_{\gamma}+kd_1 ~~(k\in \mathbb{Z}_{\geq 1}),
$$
\item \label{lemma2-3}
 For  $1\leq \beta,\gamma\leq n$ and 
 for $a\in \mathbb{Z}_{\geq 0}^n\setminus\{0\}$,
$$
\text{if }~~\frac{\partial^a\tilde{J}^{\gamma}_{\beta}}{\partial z^a} (q)
\neq 0 \quad \text{then}\quad
a\cdot d+d_{\beta}=d_{\gamma}+kd_1 ~~(k\in \mathbb{Z}_{\geq 1}).
$$
\item
 For $1\leq \beta,\gamma\leq n$ and for
$a\in \mathbb{Z}_{\geq 0}^n\setminus\{0\}$ satisfying
$d_{\beta}+a\cdot d=d_{\gamma}+d_1$, it holds that
$$
\frac{\partial^a\tilde{J}^{\gamma}_{\beta}}{\partial z^a} (q)=
-\frac{\partial^aJ^{\gamma}_{\beta}}{\partial z^a} (q)~.
$$
\end{enumerate}\end{lemma}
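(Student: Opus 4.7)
The plan is to derive the four statements in order, reducing each to the compatibility condition \eqref{compatible-x0}, to Lemma \ref{x-derivatives}, and to the abstract vanishing lemmas in \S \ref{preliminary-vanishing}.

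For (1), observe that $J^{\gamma}_{\beta}(q) = \frac{\partial x^{\gamma}}{\partial z^{\beta}}(q)$, which equals $\delta^{\gamma}_{\beta}$ directly from the compatibility condition \eqref{compatible-x0}. Hence $J(q) = I$, and inverting gives $\tilde{J}(q) = I$ as well.

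For (2), I rewrite $\frac{\partial^a J^{\gamma}_{\beta}}{\partial z^a}(q) = \frac{\partial^{a+\bm{e}_{\beta}} x^{\gamma}}{\partial z^{a+\bm{e}_{\beta}}}(q)$. Since $a \neq 0$, we have $|a+\bm{e}_{\beta}| \geq 2$, so Lemma \ref{x-derivatives} applies and tells us that if this derivative is nonzero, then $(a + \bm{e}_{\beta}) \cdot d = d_{\gamma} + k d_1$ with $k \in \mathbb{Z}_{\geq 1}$, which rearranges to $a \cdot d + d_{\beta} = d_{\gamma} + k d_1$.

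Parts (1) and (2) together say that the Jacobian matrix $J$ satisfies the condition $P(1)$ of Definition \ref{def-condition-PQ}: the diagonality at $q$ comes from Part (1) and the required degree congruence with $k \geq 1$ from Part (2). For (3), I then apply Lemma \ref{inverse-vanishing}(1) with $i=1$ to $X = J$, obtaining at once that $\tilde{J}$ also satisfies $P(1)$, which is precisely the claim. For (4), I invoke Lemma \ref{inverse-vanishing}(2) with the same choice to obtain
$$
\frac{\partial^a \tilde{J}^{\gamma}_{\beta}}{\partial z^a}(q) = -\frac{1}{J^{\gamma}_{\gamma}(q)\, J^{\beta}_{\beta}(q)}\, \frac{\partial^a J^{\gamma}_{\beta}}{\partial z^a}(q),
$$
and substituting $J^{\gamma}_{\gamma}(q) = J^{\beta}_{\beta}(q) = 1$ from Part (1) gives the desired identity.

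There is no substantial obstacle here; the lemma is an almost immediate consequence of the abstract machinery of \S \ref{preliminary-vanishing}. The only point requiring care is the verification that $J$ satisfies $P(1)$ rather than only the weaker $P(0)$: this is where the goodness of $x$ (entering through Lemma \ref{x-derivatives}) is essential, since it is precisely what rules out the possibility $k = 0$ in Part (2) and hence propagates through parts (3) and (4).
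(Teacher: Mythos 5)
Your proof is correct and follows essentially the same route as the paper: part (1) from the compatibility condition, part (2) from Lemma \ref{x-derivatives} via $\frac{\partial^a J^{\gamma}_{\beta}}{\partial z^a}=\frac{\partial^{a+\bm{e}_{\beta}}x^{\gamma}}{\partial z^{a+\bm{e}_{\beta}}}$, and parts (3)--(4) by verifying that $J$ satisfies $P(1)$ and invoking Lemma \ref{inverse-vanishing}. Your write-up is in fact slightly more explicit than the paper's (which leaves the index shift in (2) and the evaluation $J^{\gamma}_{\gamma}(q)=J^{\beta}_{\beta}(q)=1$ in (4) implicit), but the argument is the same.
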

\begin{proof}
 (1)  is  a consequence of 
the compatibility condition \eqref{compatible-x0}.
The statement (2) for $J^{\gamma}_{\beta}$
is nothing but Lemma \ref{x-derivatives}.
The results of (1) and (2) imply that
the matrix $J=(J^{\gamma}_{\beta})$ satisfies the condition $P(1)$.
Then (3) and (4) follow from  Lemma \ref{inverse-vanishing}.
 \end{proof}

Next we study the vanishing at $q$  of derivatives of $\Omega_{\alpha}$.
Let us set
\begin{equation}\label{defL}
L_{\alpha\beta}^{\gamma}=\frac{\partial \tilde{J}^{\gamma}_{\alpha}}{\partial z^{\beta}}
\quad (1\leq \alpha,\beta,\gamma\leq n)~.
\end{equation}
The $n\times n$ matrix whose $(\gamma,\beta)$-entries are 
$L_{\alpha\beta}^{\gamma}$ is denoted $L_{\alpha}$.
Notice that $\Omega_{\alpha}$ and $L_{\alpha}$ are related as follows.
\begin{equation}\nonumber 
\begin{split}
\Omega_{\alpha\beta}^{\gamma}&=
-\sum_{\nu=1}^n\left(\frac{\partial J}{\partial z^{\nu}}\tilde{J}\right)^{\gamma}_{\alpha}~
\tilde{J}^{\nu}_{\beta}
=\sum_{\nu=1}^ n\left(J\frac{\partial \tilde{J}}{\partial z^{\nu}}\right)^{\gamma}_{\alpha}
~
\tilde{J}^{\nu}_{\beta}
=\sum_{\lambda,\nu=1}^n 
J^{\gamma}_{\lambda}\frac{\partial \tilde{J}^{\lambda}_{\alpha}}{\partial z^{\nu}}\tilde{J}^{\nu}_{\beta}
\\&=(JL_{\alpha}\tilde{J})^{\gamma}_{\beta}~.
\end{split}
\end{equation}

\begin{lemma}\label{lemma3}
\begin{enumerate}
\item For $1\leq \alpha,\beta,\gamma\leq n$ and
$a\in \mathbb{Z}^n_{\geq 0}$,
$$
\text{if}\quad \frac{\partial^aL_{\alpha\beta}^{\gamma}}{\partial  z^a}(q)\neq 0\quad \text{then}\quad 
a\cdot d+d_{\alpha}+d_{\beta}=d_{\gamma}+kd_1 ~~(k\in \mathbb{Z}_{\geq 1})~.
$$
\item For $1\leq \alpha,\beta,\gamma\leq n$ and
$a\in \mathbb{Z}^n_{\geq 0}$,
$$
\text{if}\quad \frac{\partial^a\Omega_{\alpha\beta}^{\gamma}}{\partial  z^a}(q)\neq 0\quad \text{then}\quad 
a\cdot d+d_{\alpha}+d_{\beta}=d_{\gamma}+kd_1 ~~(k\in\mathbb{Z}_{\geq 1})~.
$$
\item For $1\leq \beta,\gamma\leq n$.
$$
\Omega_{1\beta}^{\gamma}(q)
=\frac{1-d_{\gamma}}{z^1(q)}\delta^{\gamma}_{\beta}
$$
\end{enumerate}
\end{lemma}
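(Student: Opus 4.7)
The plan is to establish (1), (2), (3) in order, using the vanishing machinery of \S\ref{preliminary-vanishing} together with the ingredients already proved.

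For (1), I would exploit the definitional identity
\[
\frac{\partial^a L_{\alpha\beta}^\gamma}{\partial z^a}(q)
= \frac{\partial^{a+\bm{e}_\beta}\tilde{J}^\gamma_\alpha}{\partial z^{a+\bm{e}_\beta}}(q).
\]
Since $a+\bm{e}_\beta \neq 0$ for every $a\in\mathbb{Z}_{\geq 0}^n$, Lemma \ref{lemma2}\eqref{lemma2-3} applies with exponent $a+\bm{e}_\beta$ and yields $(a+\bm{e}_\beta)\cdot d + d_\alpha = d_\gamma + kd_1$ for some $k\geq 1$, which after isolating $a\cdot d$ is exactly the claim.

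For (2), I would use the matrix identity $\Omega_\alpha = JL_\alpha \tilde{J}$ displayed just before the statement. Lemma \ref{lemma2} shows that both $J$ and $\tilde{J}$ satisfy condition $P(1)$ of Definition \ref{def-condition-PQ} (diagonal at $q$ by part (1), with the correct degree-shift property by (2)--(3)). Part (1) just proved shows that $L_\alpha = (L_{\alpha\beta}^\gamma)$ satisfies condition $Q(1,d_\alpha)$, where the shift parameter $h=d_\alpha$ accounts for the fixed lower index $\alpha$ that is \emph{not} an entry index of the matrix. Applying Lemma \ref{product-vanishing}(4) to $J\cdot L_\alpha\cdot \tilde{J}$ then shows that the product satisfies $Q(1,d_\alpha)$, which is precisely the vanishing condition claimed for $\Omega_{\alpha\beta}^\gamma$.

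For (3), I would evaluate the Euler-type identity \eqref{Omega},
\[
\sum_{\alpha=1}^n d_\alpha x^\alpha \Omega_\alpha = \mathrm{diag}(1-d_1,\ldots,1-d_n),
\]
at the regular vector $q$. Since $G$ is a duality group we have $\mathfrak{a}(d_1)=\mathfrak{b}(d_1)=1$, so Lemma \ref{lem:van1}(1) gives $z^2(q)=\cdots=z^n(q)=0$ and Lemma \ref{compatible-x-form} applies, yielding $x^1(q)=z^1(q)/d_1$ and $x^\alpha(q)=0$ for $\alpha\neq 1$. Only the $\alpha=1$ summand survives and gives $z^1(q)\,\Omega_1(q)=\mathrm{diag}(1-d_1,\ldots,1-d_n)$, from which $\Omega_{1\beta}^\gamma(q) = (1-d_\gamma)\delta^\gamma_\beta/z^1(q)$ follows immediately. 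The only substantive bookkeeping step is to pinpoint that $L_\alpha$ carries the shift parameter $h = d_\alpha$ (and not $0$), so that the flanking conjugation by $J,\tilde{J}$ in Lemma \ref{product-vanishing}(4) preserves $k\geq 1$ and delivers the clean degree relation in (2); no analytic subtlety arises since $q$ is regular and every rational function involved is holomorphic at $q$.
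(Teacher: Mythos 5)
Your proposal is correct and follows essentially the same route as the paper: (1) by viewing $L_{\alpha\beta}^{\gamma}$ as the $(a+\bm{e}_{\beta})$-derivative of $\tilde{J}^{\gamma}_{\alpha}$ and invoking Lemma \ref{lemma2}\eqref{lemma2-3}, (2) by checking that $J,\tilde{J}$ satisfy $P(1)$ and $L_{\alpha}$ satisfies $Q(1,d_{\alpha})$ and applying Lemma \ref{product-vanishing}(4) to $\Omega_{\alpha}=JL_{\alpha}\tilde{J}$, and (3) by evaluating \eqref{Omega} at $q$ using $x^1(q)=z^1(q)/d_1$ from Lemma \ref{compatible-x-form}. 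No gaps.
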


\begin{proof}
(1) The first statement is an immediate consequence of 
Lemma \ref{lemma2} \eqref{lemma2-3}.
\\
(2) Lemma \ref{lemma2} (1)(2)  imply that
$J,\tilde{J}$ satisfy the condition $P(1)$.
The result (1) implies that $L_{\alpha}$ satisfies the condition $Q(1,d_{\alpha})$.
Therefore $\Omega_{\alpha}=JL_{\alpha}\tilde{J}$
satisfies the condition $Q(1,d_{\alpha})$ by Lemma \ref{product-vanishing} (4).
\\
(3) 
Evaluating \eqref{Omega} at $q$, we have
$$
d_1 x^1(q)\Omega_{1\beta}^{\gamma}(q)=(1-d_{\gamma})\delta^{\gamma}_{\beta}~.
$$
Since $x^1(q)=\frac{z^1(q)}{d_1}$  (see Lemma \ref{compatible-x-form}),  
the statement follows.
\end{proof}

\subsection{Vanishing of  $\Gamma_{\alpha\beta}^{\gamma}$}
Recall the  definition  of $\Gamma_{\alpha\beta}^{\gamma}\in \mathbb{C}[x']
$ ($1\leq  \alpha,\beta,\gamma\leq n$):
it is the coefficient of $(x^1)^n$ in $\Delta\Omega_{\alpha\beta}^{\gamma}$.
\begin{theorem}\label{good-implies-flat}
$\Gamma_{\alpha\beta}^{\gamma}=0$ 
for $1\leq \alpha,\beta,\gamma\leq n$.
\end{theorem}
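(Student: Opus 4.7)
The plan is to extract the leading-order coefficient of $\Omega^{\gamma}_{\alpha\beta}$ along a weighted one-parameter deformation of $q$ in two ways and compare. The case $d_{\gamma}-d_{\alpha}-d_{\beta}<0$ is trivial since $\Gamma^{\gamma}_{\alpha\beta}\in \mathbb{C}[x']$ is weighted-homogeneous of that degree, so I may assume $d_{\gamma}-d_{\alpha}-d_{\beta}\geq 0$. Fix an arbitrary $z'=(z^2,\ldots,z^n)\in \mathbb{C}^{n-1}$ and set
\[
z_{\lambda}:=(z^1(q),\lambda^{d_2}z^2,\ldots,\lambda^{d_n}z^n).
\]
Since $z_{\lambda}\to q$ as $\lambda\to 0$ and $\Omega^{\gamma}_{\alpha\beta}$ is holomorphic at the regular vector $q$, $\Omega^{\gamma}_{\alpha\beta}(z_{\lambda})$ is a convergent power series in $\lambda$ near $\lambda=0$.

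First, I use the Taylor expansion of $\Omega^{\gamma}_{\alpha\beta}$ at $q$ in the $z$-coordinate. Because $z^1(z_{\lambda})=z^1(q)$, only multi-indices $a$ with $a_1=0$ contribute, each with a factor $\lambda^{a\cdot d}$. By Lemma \ref{lemma3}(2), the condition $\frac{\partial^a\Omega^{\gamma}_{\alpha\beta}}{\partial z^a}(q)\neq 0$ forces $a\cdot d\geq d_{\gamma}-d_{\alpha}-d_{\beta}+d_1$, so the coefficient of $\lambda^{d_{\gamma}-d_{\alpha}-d_{\beta}}$ in $\Omega^{\gamma}_{\alpha\beta}(z_{\lambda})$ vanishes.

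Second, I use the decomposition $\Delta\Omega^{\gamma}_{\alpha\beta}=(x^1)^n\Gamma^{\gamma}_{\alpha\beta}(x')+(x^1)^{n-1}D^{\gamma}_{\alpha\beta}(x')+\cdots$. By Lemma \ref{compatible-x-form} together with $G$-invariance (which forces every monomial $(z^1)^k(z')^{b'}$ of $x^{\alpha}$ to satisfy $b'\cdot d'\equiv d_{\alpha}\pmod{d_1}$), one computes
\[
x^1(z_{\lambda})=x^1(q)+O(\lambda^{d_1}),\qquad x^{\alpha}(z_{\lambda})=\lambda^{d_{\alpha}}\tau^{\alpha}(z')+O(\lambda^{d_{\alpha}+d_1})\quad(\alpha\neq 1),
\]
where $\tau^{\alpha}(z'):=z^{\alpha}+\phi^{\alpha}(z')$ and $\phi^{\alpha}\in \mathbb{C}[z']$ is weighted-homogeneous of weight $d_{\alpha}$ with every monomial satisfying $|b'|\geq 2$; in particular, $\phi^{\alpha}$ involves only $z^{\beta}$'s with $d_{\beta}<d_{\alpha}$. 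Substituting gives $\Delta(z_{\lambda})=(x^1(q))^n+O(\lambda^{d_1})$, and tracking $\lambda$-orders term by term shows that only the $k=n$ summand of $\Delta\Omega^{\gamma}_{\alpha\beta}$ contributes to order $\lambda^{d_{\gamma}-d_{\alpha}-d_{\beta}}$; dividing yields
\[
\Omega^{\gamma}_{\alpha\beta}(z_{\lambda})=\lambda^{d_{\gamma}-d_{\alpha}-d_{\beta}}\,\Gamma^{\gamma}_{\alpha\beta}\bigl(\tau^2(z'),\ldots,\tau^n(z')\bigr)+O(\lambda^{d_{\gamma}-d_{\alpha}-d_{\beta}+d_1}).
\]

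Comparing the two computations forces $\Gamma^{\gamma}_{\alpha\beta}(\tau^2(z'),\ldots,\tau^n(z'))\equiv 0$ in $\mathbb{C}[z']$. Because the substitution $x^{\alpha}\mapsto \tau^{\alpha}(z')$ is upper-triangular with respect to the weight filtration ($\tau^{\alpha}-z^{\alpha}$ involves only variables of strictly smaller weight), it induces a graded ring automorphism of $\mathbb{C}[x^2,\ldots,x^n]\cong \mathbb{C}[z^2,\ldots,z^n]$, and therefore $\Gamma^{\gamma}_{\alpha\beta}=0$. The main technical care is the leading-order tracking on the $x$-side; the strict weight-lowering of $\phi^{\alpha}$ is the key point that makes the final substitution invertible and upgrades $\Gamma^{\gamma}_{\alpha\beta}(\tau(z'))=0$ to the vanishing of $\Gamma^{\gamma}_{\alpha\beta}$ itself.
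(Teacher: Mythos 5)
Your proof is correct, and its first half is the paper's argument in generating-function form: the coefficient of $\lambda^{d_{\gamma}-d_{\alpha}-d_{\beta}}$ in $\Omega_{\alpha\beta}^{\gamma}(z_{\lambda})$ is exactly $\sum_{a}\frac{(z')^a}{a!}\frac{\partial^a\Omega_{\alpha\beta}^{\gamma}}{\partial z^a}(q)$ over $a\in\mathcal{Z}$ with $a\cdot d=d_{\gamma}-d_{\alpha}-d_{\beta}$, which is the same family of derivatives the paper kills one at a time using Lemma \ref{lemma3}(2) and the Leibniz rule applied to $\Delta\Omega_{\alpha\beta}^{\gamma}$. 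Where you genuinely diverge is the $x$-side. The paper differentiates the expansion $\Delta\Omega_{\alpha\beta}^{\gamma}=\sum_k (x^1)^{n-k}\big(\sum_b A_b^{(k)}x^b\big)$ by $z^a$ at $q$ and isolates $a!\,A_a^{(0)}x^1(q)^n$ via Lemma \ref{a-b-improved} and Lemma \ref{a-b}(2); this is where the goodness of $x$ enters a second time (to exclude the case $|a'|>|b|$, $a'\cdot d=b\cdot d$). You instead extract the leading $\lambda$-order of each $x^{\alpha}(z_{\lambda})$ from Lemma \ref{compatible-x-form} together with the $g$-weight congruence $b'\cdot d'\equiv d_{\alpha}\pmod{d_1}$ on monomials, which uses only compatibility and invariance, not goodness; the price is the extra step of inverting the triangular substitution $x^{\alpha}\mapsto\tau^{\alpha}(z')=z^{\alpha}+\phi^{\alpha}$ (harmless, since $\phi^{\alpha}$ involves only variables of strictly smaller weight, so the map is a polynomial automorphism). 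Net effect: in your version goodness is consumed only through Lemma \ref{lemma3}(2), while the paper also consumes it through Lemma \ref{a-b-improved}; your version in exchange leans on the explicit normal form of compatible invariants and on the one-dimensionality of the top eigenspace (so that $q=(z^1(q),0,\dots,0)$ in the graded coordinates, which you use implicitly when asserting $z_{\lambda}\to q$ and should state). Both arguments are sound; the paper's is more uniform with the rest of \S\ref{sec:duality-groups}, yours makes slightly weaker use of the goodness hypothesis.
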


\begin{proof}
Fix $\alpha,\beta,\gamma$.
We take $a\in \mathcal{Z}$ satisfying $a\cdot d=d_{\gamma}-d_{\alpha}-d_{\beta}$
and 
differentiate $\Delta \Omega_{\alpha\beta}^{\gamma}$ by $z^a$
and evaluate at $q$.  We have
$$
\frac{\partial^a}{\partial z^a}
(\Delta \Omega_{\alpha\beta}^{\gamma})(q)
=\sum_{a'+a''=a}
\frac{a!}{a'!a''!}
\frac{\partial^{a'} \Delta}{\partial z^{a'}}(q)\cdot 
\underbrace{\frac{\partial^{a''} \Omega_{\alpha\beta}^{\gamma}}{\partial z^{a''}}(q)}_{(\star)}.
$$
Applying Lemma \ref{lemma3} (2), we see that 
$(\star)=0$.  Therefore
$$
\frac{\partial^a}{\partial z^a}
(\Delta \Omega_{\alpha\beta}^{\gamma})(q)=0~.
$$
On the other hand,
given that the degree of
$\Delta\Omega_{\alpha\beta}^{\gamma} \in \mathbb{C}[x]$
is $nd_1+d_{\gamma}-d_{\alpha}-d_{\beta}$,
it is written as follows:
\begin{equation}\label{DeltaOmega}
\Delta \Omega_{\alpha\beta}^{\gamma}
=\sum_{k=0}^n\left( \sum_{\begin{subarray}{c}
b\in \mathcal{Z}\\
b\cdot d=d_{\gamma}-d_{\alpha}-d_{\beta}+kd_1
\end{subarray}}
A_b^{(k)}x^b \right)\cdot (x^1)^{n-k}~.
\end{equation}
We again take $a\in \mathcal{Z}$ satisfying $a\cdot d=d_{\gamma}-d_{\alpha}-d_{\beta}$.
Differentiating  \eqref{DeltaOmega} by $z^a$
and evaluating at $q$, we obtain
$$
0=\sum_{k=0}^n
\sum_{\begin{subarray}{c}
b\in \mathcal{Z}\\
b\cdot d=d_{\gamma}-d_{\alpha}-d_{\beta}+kd_1
\end{subarray}}
A_b^{(k)}
\sum_{a=a'+a''}
\frac{a!}{a'!a''!}
\underbrace{\frac{\partial^{a'}x^b}{\partial z^{a'}}(q)}_{(\star\star)}\cdot
\frac{\partial^{a''}(x^1)^{n-k}}{\partial z^{a''}}(q)~.
$$
By Lemma \ref{a-b-improved}, 
$(\star\star)\neq 0$ holds only if 
\begin{enumerate}
\item[(i)]
$|a'|>|b|$ and $a'\cdot d=b\cdot d+ld_1$ ($l\geq 1$), or
\item[(ii)] $a'=b$
\end{enumerate}
Since $a'\cdot d\leq a\cdot d=d_{\gamma}-d_{\alpha}-d_{\beta}\leq b\cdot d,$
Case (i) cannot happen.
Case (ii) occur only when $a'=a=b$ and $b\cdot d=d_{\gamma}-d_{\alpha}-d_{\beta}$.
Therefore 
\begin{equation}\nonumber
\begin{split}
0&=
A_a^{(0)}\frac{\partial^{a} x^a}{\partial z^a} (q)\cdot
x^1(q)^n=
a! A_a^{(0)}x^1(q)^n~.
\end{split}
\end{equation}
Here we used  Lemma \ref{a-b} (2).
Since $x^1(q)\neq 0$, we have $A_a^{(0)}=0$.

Notice that $\Gamma_{\alpha\beta}^{\gamma}$ is the $k=0$ part of \eqref{DeltaOmega}, that is,
$$\Gamma_{\alpha\beta}^{\gamma}
= \sum_{\begin{subarray}{c}
b\in \mathcal{Z}\\
b\cdot d=d_{\gamma}-d_{\alpha}-d_{\beta}
\end{subarray}}
A_b^{(0)}x^b=0~.
$$
\end{proof}

\subsection{The inverse $\tilde{\Omega}$ of $\Omega_1$}
\begin{lemma}\label{lemma33}
\begin{enumerate}
\item 
For $1\leq \beta,\gamma\leq n$ and for
$a\in \mathcal{Z}\setminus\{0\}$,
$$
\text{if}\quad \frac{\partial^a\tilde{\Omega}_{\beta}^{\gamma}}{\partial  z^a}(q)\neq 0\quad \text{then}\quad 
a\cdot d+d_{\beta}=d_{\gamma}+kd_1 ~~(k\in \mathbb{Z}_{\geq 1})~.
$$

\item For $1\leq \alpha,\beta,\gamma\leq n$ and
for $a\in  \mathcal{Z}\setminus\{0\}$ satisfying
$a\cdot d=d_1+d_{\gamma}-d_{\alpha}-d_{\beta}$, 
$$
 \frac{\partial^a}{\partial  z^a}
(\tilde{\Omega}\Omega_{\alpha})^{\gamma}_{\beta}(q)
=\frac{z^1(q)}{d_{\gamma}-1}
\frac{\partial^a}{\partial z^a}\left(\frac{\partial^2 x^{\gamma}}{\partial z^{\alpha}\partial z^{\beta}}\right)(q)~.
$$
\end{enumerate}
\end{lemma}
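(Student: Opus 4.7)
The plan is to reduce part (1) to a direct computation from a structural decomposition of $\tilde{\Omega}$, and then to derive part (2) from (1) via two nested applications of the Leibniz rule. The key input for (1) is Theorem~\ref{good-implies-flat}: since all $\Gamma_{\alpha\beta}^{\gamma}=0$, formula \eqref{D1-Gamma} collapses to $D_{1\beta}^{\gamma}=\frac{1-d_{\gamma}}{d_{1}}\delta^{\gamma}_{\beta}$, so $D_{1}^{-1}$ is the constant diagonal matrix with entries $\frac{d_{1}}{1-d_{\gamma}}$. Combined with \eqref{invOmega}, this forces $\tilde{\Omega}$ to be a genuine polynomial in $x$, with the decomposition
$$
\tilde{\Omega}^{\gamma}_{\beta}=\frac{d_{1}}{1-d_{\gamma}}\delta^{\gamma}_{\beta}\,x^{1}+R^{\gamma}_{\beta}(x'),
$$
where $R^{\gamma}_{\beta}\in\mathbb{C}[x']$ is weighted-homogeneous of degree $d_{1}+d_{\gamma}-d_{\beta}$ in $x'=(x^{2},\ldots,x^{n})$. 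Using $x^{1}(q)=z^{1}(q)/d_{1}$ and $x^{\alpha}(q)=0$ for $\alpha\neq 1$ (Lemma~\ref{compatible-x-form}) we also get $\tilde{\Omega}^{\gamma}_{\beta}(q)=\frac{z^{1}(q)}{1-d_{\gamma}}\delta^{\gamma}_{\beta}$, which will be needed below.

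To finish (1), I differentiate this decomposition by $z^{a}$ for $a\in\mathcal{Z}\setminus\{0\}$ and evaluate at $q$. The $x^{1}$-piece contributes only when $\beta=\gamma$: the $|a|=1$ case vanishes by compatibility, and for $|a|\geq 2$ Lemma~\ref{x-derivatives} applied to $x^{1}$ forces $a\cdot d=(l+1)d_{1}$ with $l\geq 1$, so $k\geq 2$. Writing $R^{\gamma}_{\beta}(x')=\sum_{c}C_{c}\,x^{c}$ over $c\in\mathcal{Z}$ with $c\cdot d=d_{1}+d_{\gamma}-d_{\beta}$ and applying Lemma~\ref{a-b-improved}, each nonvanishing $\partial^{a}x^{c}/\partial z^{a}(q)$ occurs either at $a=c$ (giving $k=1$) or with $|a|>|c|$ and $a\cdot d=c\cdot d+ld_{1}$, $l\geq 1$ (giving $k\geq 2$). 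In every surviving case $k\geq 1$, establishing~(1).

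For (2), I expand $(\tilde{\Omega}\Omega_{\alpha})^{\gamma}_{\beta}=\sum_{\rho}\tilde{\Omega}^{\gamma}_{\rho}\,\Omega^{\rho}_{\alpha\beta}$ by Leibniz, writing $a=a'+a''$ with $a',a''\in\mathcal{Z}$. When $a'\neq 0$, part (1) gives $a'\cdot d+d_{\rho}=d_{\gamma}+k_{1}d_{1}$ with $k_{1}\geq 1$ and Lemma~\ref{lemma3}(2) gives $a''\cdot d+d_{\alpha}+d_{\beta}=d_{\rho}+k_{2}d_{1}$ with $k_{2}\geq 1$; adding and using the hypothesis $a\cdot d=d_{1}+d_{\gamma}-d_{\alpha}-d_{\beta}$ forces $k_{1}+k_{2}=1$, contradicting $k_{1},k_{2}\geq 1$. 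Hence only the $a'=0$ term survives, and $\tilde{\Omega}^{\gamma}_{\rho}(q)$ then pins $\rho=\gamma$, yielding
$$
\frac{\partial^{a}}{\partial z^{a}}(\tilde{\Omega}\Omega_{\alpha})^{\gamma}_{\beta}(q)=\frac{z^{1}(q)}{1-d_{\gamma}}\,\frac{\partial^{a}\Omega^{\gamma}_{\alpha\beta}}{\partial z^{a}}(q).
$$

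A second Leibniz expansion of $\Omega^{\gamma}_{\alpha\beta}=-\sum_{\mu,\nu}\tilde{J}^{\mu}_{\alpha}\tilde{J}^{\nu}_{\beta}\,\partial^{2}x^{\gamma}/\partial z^{\mu}\partial z^{\nu}$, now splitting $a=a_{1}+a_{2}+a_{3}$, is handled identically: Lemma~\ref{lemma2}(3) contributes $k_{i}\geq 1$ whenever $a_{i}\neq 0$ ($i=1,2$), and Lemma~\ref{x-derivatives} applied to $\partial^{a_{3}+\bm{e}_{\mu}+\bm{e}_{\nu}}x^{\gamma}$ always contributes $k_{3}\geq 1$. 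Adding the three congruences and invoking the hypothesis once more forces $k_{1}+k_{2}+k_{3}=1$, which together with $k_{3}\geq 1$ isolates the single surviving term $a_{1}=a_{2}=0$, $\mu=\alpha$, $\nu=\beta$, $a_{3}=a$. This gives $\partial^{a}\Omega^{\gamma}_{\alpha\beta}/\partial z^{a}(q)=-\partial^{a}/\partial z^{a}(\partial^{2}x^{\gamma}/\partial z^{\alpha}\partial z^{\beta})(q)$, and substituting produces the asserted formula with prefactor $z^{1}(q)/(d_{\gamma}-1)$. The main obstacle is the bookkeeping in these two nested case analyses: the numerical hypothesis on $a\cdot d$ is exactly tight enough to annihilate every off-diagonal contribution in both expansions, and this tightness is what makes the final identity come out so cleanly.
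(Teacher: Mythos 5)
Your proof is correct and follows essentially the same route as the paper: part (1) uses the identical decomposition $\tilde{\Omega}^{\gamma}_{\beta}=\frac{d_1\delta^{\gamma}_{\beta}}{1-d_{\gamma}}x^1+(\text{terms in }\mathbb{C}[x'])$ coming from $\Gamma=0$, \eqref{D1-Gamma} and \eqref{invOmega}, together with Lemmas \ref{x-derivatives} and \ref{a-b-improved}. The only (harmless) difference is in part (2), where you inline the degree-counting that the paper packages in Lemma \ref{product-vanishing} and apply it directly to $\Omega_{\alpha\beta}^{\gamma}=-\sum_{\mu,\nu}\tilde{J}^{\mu}_{\alpha}\tilde{J}^{\nu}_{\beta}\,\partial^2 x^{\gamma}/\partial z^{\mu}\partial z^{\nu}$ rather than routing through $L_{\alpha}$, the factorization $\Omega_{\alpha}=JL_{\alpha}\tilde{J}$, and Lemma \ref{lemma2}(4); both yield the same single surviving term.
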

\begin{proof}
(1) 
Given \eqref{D1-Gamma}, \eqref{invOmega} and $\Gamma_{\alpha\beta}^{\gamma}=0$, $\tilde{\Omega}^{\gamma}_{\beta}$
is written as follows:
$$
\tilde{\Omega}_{\beta}^{\gamma}
=\frac{d_1\delta^{\gamma}_{\beta}}{1-d_{\gamma}}
x^1
+\sum_{\begin{subarray}{c}
b\in \mathcal{Z}\\b\cdot d=d_1+d_{\gamma}-d_{\beta}
\end{subarray}}
B_b x^b \quad (B_b\in \mathbb{C})~.
$$
Take $a\in \mathcal{Z}\setminus\{0\}$ and  consider the derivative by $z^a$:
$$
 \frac{\partial^a\tilde{\Omega}_{\beta}^{\gamma}}{\partial  z^a}(q)
 =\underbrace{\frac{d_1\delta^{\gamma}_{\beta}}{1-d_{\gamma}}
 \frac{\partial^a x^1}{\partial z^a}(q)}_{(\star)}
+\sum_{\begin{subarray}{c}
b\in \mathcal{Z}\\b\cdot d=d_1+d_{\gamma}-d_{\beta}
\end{subarray}}
B_b \underbrace{\frac{\partial^a x^b}{\partial z^a}(q)}_{(\star\star)}~.
$$
Assume that $\frac{\partial^a\tilde{\Omega}_{\beta}^{\gamma}}{\partial  z^a}(q)\neq 0$.
Then $(\star)\neq 0$ or at least one of $(\star\star)$'s is not zero.
If $(\star)\neq 0$,
$a\cdot d=kd_1$ ($k\in \mathbb{Z}_{\geq 2}$)
by Lemma \ref{x-derivatives}.
The factor $\delta^{\gamma}_{\beta}$ means $d_{\beta}=d_{\gamma}$,
so  $a\cdot d+d_{\beta}=d_{\gamma}+kd_1$ ($k\in \mathbb{Z}_{\geq 2}$) holds.
If $(\star\star)\neq 0$,  we have
$a\cdot d=b\cdot d+kd_1=d_{\gamma}-d_{\beta}+(k+1)d_1$ ($k\in \mathbb{Z}_{\geq 1}$)
or $a=b$ by Lemma \ref{a-b-improved}.
In the both cases, $a\cdot d=d_{\gamma}-d_{\beta}+k'd_1$ ($k'\in \mathbb{Z}_{\geq 1}$) holds.
\\
(2) The result of (1)  implies that $\tilde{\Omega}$ satisfies the condition
$P(1)$ with $\mathbb{Z}_{\geq 0}^n$ replaced by $\mathcal{Z}$.
Lemma \ref{lemma3} (2) implies that 
$\Omega_{\alpha}$ satisfies the condition $Q(1,d_{\alpha})$.
Therefore
by Lemma \ref{product-vanishing} (3) and Remark \ref{mathcalZ},
we have
$$
 \frac{\partial^a}{\partial  z^a}
(\tilde{\Omega}\Omega_{\alpha})(q)
=\tilde{\Omega}(q)
 \frac{\partial^a \Omega_{\alpha}}{\partial  z^a}(q)=
 \frac{z^1(q)}{1-d_{\gamma}} \frac{\partial^a \Omega_{\alpha}}{\partial  z^a}(q)
$$
 for $a\in \mathcal{Z}$ satisfying 
$a\cdot d=d_1+d_{\gamma}-d_{\alpha}-d_{\beta}$.
Further, we apply Lemma \ref{product-vanishing} (5) to
$$
 \frac{\partial^a \Omega_{\alpha}}{\partial  z^a}(q) 
 =\frac{\partial^a}{\partial  z^a}(JL_{\alpha}\tilde{J})(q)~.
$$
Lemma \ref{lemma3} (1) implies that $L_{\alpha}$ satisfies the condition $Q(1,d_{\alpha})$. 
Lemma \ref{lemma2} (1)(2) imply that
$J,\tilde{J}$ satisfy the condition $P(1)$.
Therefore we have
$$
\frac{\partial^a}{\partial  z^a}(JL_{\alpha}\tilde{J})(q)
=J(q) \frac{\partial^a L_{\alpha}}{\partial  z^a}(q) \tilde{J}(q)
=\frac{\partial^a L_{\alpha}}{\partial  z^a}(q)~.
$$
Recalling the definition \eqref{defL} of $L_{\alpha}$, we have
\begin{equation}\nonumber 
\frac{\partial^a L_{\alpha\beta}^{\gamma}}{\partial z^a}(q)=
\frac{\partial^a}{\partial z^a}\left(
\frac{\partial \tilde{J}^{\gamma}_{\alpha}}{\partial z^{\beta}}
\right)(q)~.
\end{equation}
Then applying  Lemma \ref{lemma2} (4),  we obtain
\begin{equation}\label{L-q-2}
\frac{\partial^a L_{\alpha\beta}^{\gamma}}{\partial z^a}(q)
=-\frac{\partial^a}{\partial z^a}\left(
\frac{\partial {J}^{\gamma}_{\alpha}}{\partial z^{\beta}}
\right)(q)
=-\frac{\partial^a}{\partial z^a}\left(\frac{\partial^2 x^{\gamma}}{\partial z^{\alpha}\partial z^{\beta}}\right)(q)~.
\end{equation}
This completes the proof.
\end{proof}

\subsection{Expression of $C_{\alpha\beta}^{\gamma}$}
\begin{theorem}\label{main-theorem2}
It holds that
$$
C_{\alpha\beta}^{\gamma}=\frac{z^1(q)}{d_{\gamma}-1}
\sum_{\begin{subarray}{c}
a\in \mathcal{Z}\\
a\cdot d=d_1+d_{\gamma}-d_{\alpha}-d_{\beta}
\end{subarray}}
\frac{\partial^a}{\partial z^a}
\left(\frac{\partial^2 x^{\gamma}}{\partial z^{\alpha}\partial z^{\beta}}\right)
(q)\cdot \frac{x^a}{a!}~.
$$
\end{theorem}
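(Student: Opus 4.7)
\smallskip

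The plan is to recover $C_{\alpha\beta}^{\gamma}$ as a polynomial in $x'=(x^2,\ldots,x^n)$ from its derivatives at $q$, and then to identify those derivatives using Lemma \ref{lemma33}(2).

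First, by \eqref{def-C} we have $C_{\alpha\beta}^{\gamma}\in \mathbb{C}[x']$ with $\deg C_{\alpha\beta}^{\gamma}=d_1+d_{\gamma}-d_{\alpha}-d_{\beta}$, and by Theorem \ref{good-implies-flat} we may write $C_{\alpha}=\tilde{\Omega}\,\Omega_{\alpha}$. Since $\mathcal{Z}=\{a\in\mathbb{Z}_{\geq 0}^n\mid a_1=0\}$ for a duality group and a monomial $x^a$ in $\mathbb{C}[x']$ corresponds exactly to $a\in\mathcal{Z}$, we can expand
\begin{equation}\nonumber
C_{\alpha\beta}^{\gamma}=\sum_{\begin{subarray}{c}a\in\mathcal{Z}\\a\cdot d=d_1+d_{\gamma}-d_{\alpha}-d_{\beta}\end{subarray}} c_{a}\,x^{a}\qquad(c_{a}\in\mathbb{C}).
\end{equation}

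Second, I would extract the coefficients $c_{a}$ by differentiating the above expansion by $z^{a}$ and evaluating at $q$. For any other index $b\in\mathcal{Z}$ with $b\cdot d=a\cdot d$ appearing in the sum, Lemma \ref{a-b-improved} forbids the case $|a|>|b|$ with $a\cdot d=b\cdot d+kd_1$, $k\geq 1$ (since $k=0$), so $\partial^{a}x^{b}/\partial z^{a}(q)\neq 0$ requires $a=b$. Then Lemma \ref{a-b}(2) gives $\partial^{a}x^{a}/\partial z^{a}(q)=a!$, so only the diagonal term survives and we obtain
\begin{equation}\nonumber
\frac{\partial^{a}C_{\alpha\beta}^{\gamma}}{\partial z^{a}}(q)=a!\,c_{a}.
\end{equation}

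Third, I would compute the left hand side through $C_{\alpha}=\tilde{\Omega}\,\Omega_{\alpha}$. Precisely this computation is Lemma \ref{lemma33}(2), which for $a\in\mathcal{Z}$ with $a\cdot d=d_1+d_{\gamma}-d_{\alpha}-d_{\beta}$ yields
\begin{equation}\nonumber
\frac{\partial^{a}C_{\alpha\beta}^{\gamma}}{\partial z^{a}}(q)=\frac{\partial^{a}}{\partial z^{a}}(\tilde{\Omega}\Omega_{\alpha})^{\gamma}_{\beta}(q)=\frac{z^{1}(q)}{d_{\gamma}-1}\,\frac{\partial^{a}}{\partial z^{a}}\!\left(\frac{\partial^{2}x^{\gamma}}{\partial z^{\alpha}\partial z^{\beta}}\right)(q).
\end{equation}
Solving for $c_{a}$ and substituting into the expansion of $C_{\alpha\beta}^{\gamma}$ then yields the claimed formula.

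Given how much preparation has been done in Lemmas \ref{lemma2}--\ref{lemma33} and in Theorem \ref{good-implies-flat}, no step presents a serious obstacle; the proof is essentially a bookkeeping exercise. The only delicate point is verifying that when extracting coefficients, Lemma \ref{a-b-improved} (rather than the weaker Lemma \ref{a-b}) kills all off-diagonal contributions, which is exactly where the \emph{goodness} of $x$ enters the argument in an essential way.
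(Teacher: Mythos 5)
Your proposal is correct and follows essentially the same route as the paper: identify $C_{\alpha\beta}^{\gamma}=(\tilde{\Omega}\Omega_{\alpha})^{\gamma}_{\beta}$ via Theorem \ref{good-implies-flat}, expand $C_{\alpha\beta}^{\gamma}$ in monomials $x^b$ with $b\in\mathcal{Z}$ of the correct weighted degree, extract coefficients by differentiating by $z^a$ at $q$ using Lemmas \ref{a-b-improved} and \ref{a-b}(2), and compute the same derivative via Lemma \ref{lemma33}(2). Your closing observation that the goodness of $x$ enters precisely through Lemma \ref{a-b-improved} killing the off-diagonal terms matches the paper's argument exactly.
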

\begin{proof}
Fix $1\leq \alpha,\beta,\gamma\leq n$.
As  we  proved in Theorem \ref{good-implies-flat}, $\Gamma_{\alpha}=O$ holds.
Substituting this into  \eqref{def-C},  we have
$$
C_{\alpha\beta}^{\gamma}=(\tilde{\Omega}\Omega_{\alpha})^{\gamma}_{\beta}~.
$$
We take $a\in \mathcal{Z}$ satisfying 
$a\cdot d=d_1+d_{\gamma}-d_{\alpha}-d_{\beta}$
and differentiate $C_{\alpha\beta}^{\gamma}$ by $z^a$.
Then by Lemma \ref{lemma33} (2), 
\begin{equation}\nonumber
\frac{\partial^a C_{\alpha\beta}^{\gamma}}{\partial z^a}(q)=
 \frac{\partial^a}{\partial  z^a}
(\tilde{\Omega}\Omega_{\alpha})^{\gamma}_{\beta}(q)
=\frac{z^1(q)}{d_{\gamma}-1}
\frac{\partial^a}{\partial z^a}
\left(\frac{\partial^2 x^{\gamma}}{\partial z^{\alpha}\partial z^{\beta}}\right)
(q)~.
\end{equation}
On  the  other hand,
given that $\mathrm{deg}\, C_{\alpha\beta}^{\gamma}=d_1+d_{\gamma}-d_{\alpha}-d_{\beta}$, $C_{\alpha\beta}^{\gamma}$ is written as follows.
$$
C_{\alpha\beta}^{\gamma}
=\sum_{\begin{subarray}{c}
b\in \mathcal{Z}\\
b\cdot d=d_1+d_{\gamma}-d_{\alpha}-d_{\beta}
\end{subarray}}
A_b x^b  \quad  (A_b\in \mathbb{C})~.
$$
By Lemma \ref{a-b-improved}, we have
\begin{equation}\nonumber
\frac{\partial^a C_{\alpha\beta}^{\gamma}}{\partial z^a}(q)=
\sum_{\begin{subarray}{c}
b\in \mathcal{Z}\\
b\cdot d=d_1+d_{\gamma}-d_{\alpha}-d_{\beta}
\end{subarray}}
A_b \frac{\partial^a x^b}{\partial z^a}(q) =a!A_a~.
\end{equation}
Comparing these two equations, we obtain $A_a$.
\end{proof}
 
By definition, we have $C_1=(C_{1\beta}^{\gamma})=\tilde{\Omega}\Omega_1=I$.
Combining with Theorem \ref{main-theorem2},
we have the following
\begin{corollary}\label{C1-vanishing}
For  $a\in \mathcal{Z}$ satisfying
$a\cdot d=d_{\gamma}-d_{\beta}$, it holds that
\begin{equation}\nonumber
\frac{\partial^a}{\partial z^a}
\frac{\partial^2 x^{\gamma}}{\partial z^1 \partial z^{\beta}}(q)
=
\begin{cases} 0&(\gamma\neq \beta)\\
\frac{d_{\gamma}-1}{z^1(q)}&(\gamma=\beta)
\end{cases}~.
\end{equation}
\end{corollary}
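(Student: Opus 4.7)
The plan is to apply Theorem \ref{main-theorem2} with $\alpha=1$ and then match the resulting polynomial expression against the matrix identity $C_1=I$, extracting each derivative as a coefficient of a monomial in the basic invariants.

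First, since Theorem \ref{good-implies-flat} gives $\Gamma_{\alpha}=O$ for every $\alpha$, the definition \eqref{def-C} specialises at $\alpha=1$ to
\[
C_1 \;=\; \tilde{\Omega}\,\Omega_1 \;=\; \Omega_1^{-1}\Omega_1 \;=\; I,
\]
so the entries satisfy $C_{1\beta}^{\gamma}=\delta^{\gamma}_{\beta}$ for all $1\leq\beta,\gamma\leq n$. Substituting $\alpha=1$ into the formula of Theorem \ref{main-theorem2} then yields the polynomial identity
\[
\delta^{\gamma}_{\beta} \;=\; \frac{z^1(q)}{d_\gamma-1}
\sum_{\substack{a\in\mathcal{Z}\\ a\cdot d=d_\gamma-d_\beta}}
\frac{\partial^a}{\partial z^a}\!\left(\frac{\partial^2 x^\gamma}{\partial z^1\partial z^\beta}\right)\!(q)\cdot \frac{x^a}{a!}.
\]

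Because $x^1,\ldots,x^n$ are algebraically independent in $S[V]^G$, the monomials $\{x^a\mid a\in\mathbb{Z}^n_{\geq 0}\}$ are linearly independent over $\mathbb{C}$, so I would compare coefficients monomial by monomial. For the diagonal case $\gamma=\beta$, the constraint $a\cdot d=0$ together with $a\in\mathcal{Z}\subseteq\mathbb{Z}^n_{\geq 0}$ and $d_i>0$ forces $a=0$; the sum collapses to a single constant term and the identity reduces to
\[
1 \;=\; \frac{z^1(q)}{d_\gamma-1}\,\frac{\partial^2 x^\gamma}{\partial z^1\partial z^\gamma}(q),
\]
yielding the claimed value $\tfrac{d_\gamma-1}{z^1(q)}$. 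For $\gamma\neq\beta$, the left-hand side is zero, so every coefficient of $x^a$ on the right must vanish, and hence each derivative $\tfrac{\partial^a}{\partial z^a}\!\bigl(\tfrac{\partial^2 x^\gamma}{\partial z^1\partial z^\beta}\bigr)(q)$ equals zero for every admissible $a$ (the assertion being vacuous when no such $a$ exists, e.g.\ when $d_\gamma<d_\beta$).

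No genuine obstacle arises: the argument is a direct coefficient comparison between the master formula of Theorem \ref{main-theorem2} and the trivial identity $C_1=I$. The only auxiliary input beyond Theorems \ref{good-implies-flat} and \ref{main-theorem2} is the standard fact that algebraically independent generators give rise to linearly independent monomials.
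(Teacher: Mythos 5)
Your proof is correct and is exactly the argument the paper intends: it notes $C_1=\tilde{\Omega}\Omega_1=I$, combines this with Theorem \ref{main-theorem2} at $\alpha=1$, and leaves the coefficient comparison (which you spell out, including the collapse to $a=0$ when $d_\gamma=d_\beta$) as immediate. No further comment is needed.
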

The proof is immediate and omitted.   The result for the case $\gamma=\beta$ in Corollary \ref{C1-vanishing} also follows from Lemma \ref{compatible-x-form}.
We will  use this result in the proof of Corollary \ref{vector-potential}.

\subsection{Potential vector  field}
We put
\begin{equation}\label{eq:pvf}
\mathcal{G}^{\gamma}=
\frac{z^1(q)}{d_{\gamma}-1}
\sum_{a\in I_{\gamma}^{(1)}} 
\frac{\partial^a x^{\gamma}}{\partial z^a} (q) \cdot \frac{x^a}{a!}
\quad (1\leq \gamma\leq n)~.
\end{equation}
Here $$
I^{(1)}_{\gamma}=\{a\in \mathbb{Z}^n_{\geq 0}\mid 
a\cdot d=d_{\gamma}+d_1, |a|\geq 2\}~,
$$
as defined  in  \eqref{defIalpha}.
\begin{corollary}\label{vector-potential}
It holds that
$$
C_{\alpha\beta}^{\gamma}=
\frac{\partial^2 \mathcal{G}^{\gamma}}{\partial x^{\alpha}\partial x^{\beta}}~.
$$
\end{corollary}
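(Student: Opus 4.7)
The plan is to compute $\dfrac{\partial^2 \mathcal{G}^{\gamma}}{\partial x^{\alpha}\partial x^{\beta}}$ directly from the definition \eqref{eq:pvf} and match it with the formula for $C_{\alpha\beta}^{\gamma}$ from Theorem \ref{main-theorem2}. Using $\dfrac{\partial^2 x^a}{\partial x^{\alpha}\partial x^{\beta}} = \dfrac{a!}{(a-\bm{e}_{\alpha}-\bm{e}_{\beta})!}\, x^{a-\bm{e}_{\alpha}-\bm{e}_{\beta}}$ and reindexing via $b = a-\bm{e}_{\alpha}-\bm{e}_{\beta}$, I would obtain
\begin{equation*}
\frac{\partial^2 \mathcal{G}^{\gamma}}{\partial x^{\alpha}\partial x^{\beta}}
= \frac{z^1(q)}{d_{\gamma}-1}\sum_{\begin{subarray}{c} b\in \mathbb{Z}_{\geq 0}^n \\ b\cdot d = d_1+d_{\gamma}-d_{\alpha}-d_{\beta}\end{subarray}}
\frac{\partial^b}{\partial z^b}\!\left(\frac{\partial^2 x^{\gamma}}{\partial z^{\alpha}\partial z^{\beta}}\right)\!(q)\cdot\frac{x^b}{b!},
\end{equation*}
the side-condition $|a|\geq 2$ from $I_{\gamma}^{(1)}$ being automatic since $|a|=|b|+2$. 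Theorem \ref{main-theorem2} has exactly the same sum but restricted to $b\in\mathcal{Z}$ (that is, $b_1=0$, since $\mathfrak{a}(d_1)=1$ for a duality group). So the whole task reduces to showing that every term with $b_1\geq 1$ in the expanded sum vanishes.

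A crude degree estimate already limits the issue to $b_1=1$: using $d_{\gamma}\leq d_1$ and $d_{\alpha},d_{\beta}\geq 2$ one has $b\cdot d < 2d_1$, whereas $b\cdot d\geq b_1 d_1$, so $b_1\in\{0,1\}$. Write $b = \bm{e}_1 + c$ with $c\in\mathcal{Z}$; then $c\cdot d = d_{\gamma}-d_{\alpha}-d_{\beta}$. A short case analysis on whether $\alpha$ or $\beta$ equals $1$ finishes the job: if at least one of $\alpha,\beta$ equals $1$ then $c\cdot d<0$, so no admissible $c$ exists; in the remaining case $\alpha,\beta\neq 1$, commuting partials gives
\begin{equation*}
\frac{\partial^b}{\partial z^b}\!\left(\frac{\partial^2 x^{\gamma}}{\partial z^{\alpha}\partial z^{\beta}}\right)\!(q)
= \frac{\partial^{c+\bm{e}_{\alpha}}}{\partial z^{c+\bm{e}_{\alpha}}}\!\left(\frac{\partial^2 x^{\gamma}}{\partial z^1\partial z^{\beta}}\right)\!(q),
\end{equation*}
with $c+\bm{e}_{\alpha}\in\mathcal{Z}$ and $(c+\bm{e}_{\alpha})\cdot d = d_{\gamma}-d_{\beta}$. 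Corollary \ref{C1-vanishing} then evaluates the right-hand side to $(d_{\gamma}-1)\delta^{\gamma}_{\beta}/z^1(q)$, and the subcase $\gamma=\beta$ is ruled out by the same degree check ($c\cdot d = -d_{\alpha}<0$).

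The main obstacle, I expect, will be the careful combinatorial bookkeeping in the reindexing step and in the case analysis separating $b\in\mathcal{Z}$ from $b\notin\mathcal{Z}$; conceptually, no input beyond Theorem \ref{main-theorem2} and Corollary \ref{C1-vanishing} seems necessary.
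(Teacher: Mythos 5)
Your proposal is correct and follows essentially the same route as the paper: differentiate \eqref{eq:pvf} twice, reindex via $b=a-\bm{e}_{\alpha}-\bm{e}_{\beta}$, bound $b_1\leq 1$ by the degree estimate, and kill the $b_1=1$ terms by shifting $\bm{e}_1$ to $\bm{e}_{\alpha}$ and invoking Corollary \ref{C1-vanishing} (the paper additionally disposes of the subcases $\alpha=1$, $\beta=1$, $\alpha=\gamma$, $\beta=\gamma$ by the same kind of degree check you use to exclude $\gamma=\beta$). No gap.
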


\begin{proof}
Differentiating $\mathcal{G}^{\gamma}$, we have
\begin{equation}\begin{split}\nonumber
\frac{\partial^2 \mathcal{G}^{\gamma}}{\partial x^{\alpha}\partial x^{\beta}}
&=
\frac{z^1(q)}{d_{\gamma}-1}
\sum_{\begin{subarray}{c}a\in I_{\gamma}^{(1)};\\ 
a_{\alpha},a_{\beta}\geq 1
\end{subarray} }
\frac{\partial^a x^{\gamma}}{\partial z^a} (q) 
\cdot
\frac{x^{a-\bm{e}_{\alpha}-\bm{e}_{\beta}}}{a!}\times 
\begin{cases}
a_{\alpha} a_{\beta}&  (\alpha\neq \beta)\\
a_{\alpha}(a_{\alpha}-1)&(\alpha=\beta)
\end{cases}
\end{split}
\end{equation}
If we write $b=a-\bm{e}_{\alpha}-\bm{e}_{\beta}$ in the RHS,
this becomes
$$
\frac{\partial^2 \mathcal{G}^{\gamma}}{\partial x^{\alpha}\partial x^{\beta}}=
\frac{z^1(q)}{d_{\gamma}-1}
\sum_{\begin{subarray}{c}
b\in \mathbb{Z}_{\geq 0}^n;\\
b\cdot d=d_{1}+d_{\gamma}-d_{\alpha}-d_{\beta}
\end{subarray}} 
\underbrace{\frac{\partial^b}{\partial z^b} 
\frac{\partial^2 x^{\gamma}}{\partial z^{\alpha}\partial z^{\beta}}(q) }_{(\star)}\cdot 
\frac{x^b}{b!}~.
$$
Comparing with the expression in Theorem \ref{main-theorem2}, 
we only have to show that terms  with $b\in \mathbb{Z}_{\geq 0}^ n\setminus \mathcal{Z}$ (i.e. $b$ such that $b_1\geq 1$) 
do not contribute to the sum  in the RHS.

When $\alpha=1$, $\beta=1$, $\alpha=\gamma$ or $\beta=\gamma$,
$b\in \mathbb{Z}_{\geq}^0$  satisfying $b\cdot d=d_1+d_{\gamma}-d_{\alpha}-d_{\beta}$ has  $b_1=0$. Indeed,
if $\alpha=1$, $b_1d_1\leq b\cdot d=d_{\gamma}-d_{\beta}<d_1$ implies
$b_1=0$.

When $\alpha,\beta\neq 1,\gamma$,
$b\in\mathbb{Z}^n_{\geq 0}$ satisfying $b_1\geq 1$ and 
$b\cdot d=d_1+d_{\gamma}-d_{\alpha}-d_{\beta}$  has $b_1=1$,
because
$b_1d_1\leq b\cdot d=d_1+d_{\gamma}-d_{\alpha}-d_{\beta}<2d_1$.
If  we put $a=b-\bm{e}_1+\bm{e}_{\alpha}$, then
$a\in  \mathcal{Z}$, $a\cdot d=d_{\gamma}-d_{\beta}$ and
\begin{equation}\label{exchange}
(\star)=\frac{\partial^a }{\partial z^a}
\frac{\partial^2 x^{\gamma}}{\partial z^{1}\partial z^{\beta}}(q)~.
\end{equation}
This vanishes  by Corollary \ref{C1-vanishing}.
\end{proof}

\section{The case of irreducible finite Coxeter groups}
\label{Satake-C}
In this section, $G$ is an irreducible finite Coxeter group acting on $V=\mathbb{C}^n$, and  $(g,\zeta,q)$ is an admissible triplet of $G$.
$z=(z^1,\ldots,z^n)$ is a $(g,\zeta)$-graded coordinate system  of $V$
and $x=(x^1,\ldots, x^n)$ is a good basic invariants with respect to
$(g,\zeta,q)$ which is compatible with $z$ at $q$.
As in  \S \ref{second-part},
$\langle-,-\rangle$ denotes the $G$-invariant metric  on $V$
and $\langle-,-\rangle^*$ denotes the dual metric  on $V^*$.

\subsection{Metric}
\begin{lemma}\label{metric1}
If $d_{\alpha}+d_{\beta}\neq d_1+2$,
$$
\langle z^{\alpha},z^{\beta}\rangle^* =0~.
$$
\end{lemma}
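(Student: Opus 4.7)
The plan is to exploit the $g$-invariance of the dual metric $\langle-,-\rangle^*$ on $V^*$, combined with the $(g,\zeta)$-graded eigenvalue data of the coordinates $z^\alpha$. Since $\langle-,-\rangle$ is $G$-invariant, so is its extension to $V^*$; in particular, it is $g$-invariant. Applying this to $z^\alpha,z^\beta \in V^*$ and using $g^*z^\alpha=\zeta^{d_\alpha-1}z^\alpha$, I would compute
\begin{equation*}
\langle z^\alpha,z^\beta\rangle^* = \langle g^*z^\alpha,g^*z^\beta\rangle^*
= \zeta^{(d_\alpha-1)+(d_\beta-1)}\langle z^\alpha,z^\beta\rangle^*,
\end{equation*}
so that
\begin{equation*}
\bigl(\zeta^{d_\alpha+d_\beta-2}-1\bigr)\langle z^\alpha,z^\beta\rangle^*=0.
\end{equation*}

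It therefore suffices to show that $\zeta^{d_\alpha+d_\beta-2}\neq 1$ whenever $d_\alpha+d_\beta\neq d_1+2$. Since $\zeta$ is a primitive $d_1$-th root of unity, this amounts to checking that $d_1\nmid (d_\alpha+d_\beta-2)$ unless $d_\alpha+d_\beta=d_1+2$. Here I use the numerical constraints on the degrees of an irreducible finite Coxeter group collected at the end of \S\ref{second-part}: every $d_i$ satisfies $2=d_n\leq d_i\leq d_1$. Hence
\begin{equation*}
2 \;\leq\; d_\alpha+d_\beta-2 \;\leq\; 2d_1-2,
\end{equation*}
and the only multiple of $d_1$ lying in this range is $d_1$ itself. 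Thus $d_1\mid(d_\alpha+d_\beta-2)$ is equivalent to $d_\alpha+d_\beta=d_1+2$, which proves the lemma.

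There is no real obstacle here; the argument is a one-line eigenvalue/invariance computation, and the only subtlety is the elementary range check that rules out the trivial case $d_\alpha+d_\beta-2=0$ (excluded because degrees are at least $2$) and the large case $d_\alpha+d_\beta-2=2d_1$ (excluded because not both $d_\alpha$ and $d_\beta$ can equal $d_1$ together with the strict inequality $d_1>d_2$ that holds for duality groups, of which irreducible Coxeter groups are a special case).
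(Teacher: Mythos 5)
Your proof is correct and is essentially the paper's own argument: both use the $g$-invariance of the dual metric together with $g^*z^{\alpha}=\zeta^{d_{\alpha}-1}z^{\alpha}$ to get the factor $\zeta^{d_{\alpha}+d_{\beta}-2}$, and then note that the congruence $d_{\alpha}+d_{\beta}\equiv 2 \bmod d_1$ forces $d_{\alpha}+d_{\beta}=d_1+2$ in the admissible range. You merely spell out the elementary range check $2\leq d_{\alpha}+d_{\beta}-2\leq 2d_1-2$ that the paper leaves implicit.
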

\begin{proof}
By the $G$-invariance of the metric, we have
$$ 
\langle z^{\alpha},z^{\beta}\rangle^*
=\langle g^*z^{\alpha},g^*z^{\beta}\rangle^*
=\zeta^{d_{\alpha}+d_{\beta}-2}  \langle z^{\alpha},z^{\beta}\rangle^*~.
$$
This implies that
$\langle z^{\alpha},z^{\beta}\rangle^* =0$ if 
$ d_{\alpha}+d_{\beta}\not \equiv 2\mod d_1$.
Since $2\leq d_{\alpha},d_{\beta}\leq d_1$, 
the condition $ d_{\alpha}+d_{\beta}\not \equiv 2\mod d_1$
is equivalent to $ d_{\alpha}+d_{\beta}\neq d_1+2$.
\end{proof}
Given that $d_{\alpha}+d_{n+1-\alpha}=d_1+2$ holds,
$\langle z^{\alpha},z^{\beta}\rangle^*$ is block anti-diagonal.
If all $d_{\alpha}$'s are distinct, which is the case except for  $D_n$ with $n$ even, 
it is anti-diagonal. 

As we mentioned in \S \ref{second-part},  define $\tilde{\omega}^{\alpha\beta}$,
$\tilde{\eta}^{\alpha\beta}$ and $\eta_{\alpha\beta}$ by
$$
\tilde{\omega}^{\alpha\beta}=
\langle dx^{\alpha},dx^{\beta}\rangle^*,
\quad
\tilde{\omega}^{\alpha\beta}=\tilde{\eta}^{\alpha\beta}x^1+\upsilon^{\alpha\beta}\quad(
\tilde{\eta}^{\alpha\beta},\upsilon^{\alpha\beta}\in \mathbb{C}[x'])~,
\quad (\eta_{\alpha\beta})=(\tilde{\eta}^{-1})_{\alpha\beta}~.
$$
\begin{lemma}\label{tilde-omega-q}
\begin{enumerate}
\item $\tilde{\omega}^{\alpha\beta}(q)=\langle z^{\alpha},z^{\beta}\rangle^*$.
\item For $a\in \mathcal{Z}\setminus\{0\}$ satisfying
$a\cdot d=d_{\alpha}+d_{\beta}-d_1-2$,
$$
\frac{\partial^a \tilde{\omega}^{\alpha\beta}}{\partial  z^a}(q)=0~.
$$
\end{enumerate}
\end{lemma}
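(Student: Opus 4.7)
The plan is to write $\tilde{\omega}^{\alpha\beta}$ explicitly in the $z$-coordinates. Since $TV$ is trivial, the $1$-forms $dz^{1},\ldots,dz^{n}$ correspond to the functionals $z^{1},\ldots,z^{n}\in V^{*}$ under the canonical identification $T^{*}V\cong V^{*}$, so
\begin{equation}\nonumber
\tilde{\omega}^{\alpha\beta}=\sum_{\mu,\nu=1}^{n} J^{\alpha}_{\mu}\,J^{\beta}_{\nu}\,g^{\mu\nu},\qquad g^{\mu\nu}:=\langle z^{\mu},z^{\nu}\rangle^{*}.
\end{equation}
By Lemma \ref{metric1}, $g^{\mu\nu}=0$ unless $d_{\mu}+d_{\nu}=d_{1}+2$, so only such index pairs contribute. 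Part (1) then follows by evaluating at $q$ and using the compatibility relation $J^{\alpha}_{\mu}(q)=\delta^{\alpha}_{\mu}$ from Lemma \ref{lemma2}(1), which collapses the sum to $g^{\alpha\beta}$.

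For part (2), I would expand $\partial^{a}\tilde{\omega}^{\alpha\beta}/\partial z^{a}(q)$ via the Leibniz rule and examine each summand
\begin{equation}\nonumber
g^{\mu\nu}\,\frac{a!}{a'!\,a''!}\,\frac{\partial^{a'}J^{\alpha}_{\mu}}{\partial z^{a'}}(q)\cdot \frac{\partial^{a''}J^{\beta}_{\nu}}{\partial z^{a''}}(q)
\end{equation}
with $a=a'+a''$ and $d_{\mu}+d_{\nu}=d_{1}+2$. Rewriting each factor as $\partial^{a'+\bm{e}_{\mu}}x^{\alpha}/\partial z^{a'+\bm{e}_{\mu}}$, one obtains a dichotomy: either $a'=0$, in which case the factor equals $\delta^{\alpha}_{\mu}$ (forcing $\mu=\alpha$), or $|a'+\bm{e}_{\mu}|\geq 2$ and Lemma \ref{x-derivatives} yields $(a'+\bm{e}_{\mu})\cdot d=d_{\alpha}+k_{1}d_{1}$ with $k_{1}\geq 1$; an analogous dichotomy applies to the second factor.

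The key observation that handles every sub-case at once is a short numerical check: substituting the hypothesis $a\cdot d=d_{\alpha}+d_{\beta}-d_{1}-2$ together with $d_{\mu}+d_{\nu}=d_{1}+2$ into the degree constraint supplied by Lemma \ref{x-derivatives} --- applied either to the single differentiated factor (in the boundary cases $a'=0$ or $a''=0$) or summed over both factors --- forces the relevant $k$ (respectively $k_{1}+k_{2}$) to be $0$, contradicting the strict positivity $k_{i}\geq 1$. Hence every summand vanishes. The main obstacle is thus just the bookkeeping: one must verify that the prescribed value of $a\cdot d$ is precisely the threshold at which the goodness-enforced lower bound from Lemma \ref{x-derivatives} becomes unreachable, and this is exactly what part (2) records.
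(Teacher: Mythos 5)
Your proposal is correct and follows essentially the same route as the paper: part (1) by expanding $\tilde{\omega}^{\alpha\beta}=\sum_{\mu,\nu}J^{\alpha}_{\mu}J^{\beta}_{\nu}\langle z^{\mu},z^{\nu}\rangle^{*}$ and using $J^{\alpha}_{\mu}(q)=\delta^{\alpha}_{\mu}$, and part (2) by a Leibniz expansion in which each summand is killed by combining the degree constraint from Lemma \ref{x-derivatives} (equivalently Lemma \ref{lemma2}(2), with $k_i\geq 1$ from goodness) with the constraint $d_{\mu}+d_{\nu}=d_1+2$ from Lemma \ref{metric1}, exactly the contradiction the paper derives. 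The only cosmetic difference is that you treat the boundary cases $a'=0$ or $a''=0$ within a single dichotomy rather than as separate sums.
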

\begin{proof}
(1)  Since $\displaystyle{dx^{\alpha}=\sum_{\mu=1}^n\frac{\partial  x^{\alpha}}{\partial  z^{\mu}} dz^{\mu}}$,  $\tilde{\omega}$ is written as follows.
\begin{equation}\nonumber
\tilde{\omega}^{\alpha\beta}=\sum_{\mu,\nu=1}^n
\frac{\partial x^{\alpha}}{\partial z^{\mu}}
\frac{\partial x^{\beta}}{\partial z^{\nu}}
\langle dz^{\mu},dz^{\nu}\rangle^*
=
\sum_{\mu,\nu=1}^n J^{\alpha}_{\mu} J^{\beta}_{\nu}\langle z^{\mu},z^{\nu}\rangle^*
~.
\end{equation}
The  compatibility of $x$ with $z$  implies that $J^{\alpha}_{\mu}(q)=\delta^{\alpha}_{\mu}$.  Therefore 
evaluating at  $q$,  we obtain  $\tilde{\omega}^{\alpha\beta}(q)=\langle z^{\alpha},z^{\beta}\rangle^*$.
\\
(2) Take 
 $a\in \mathcal{Z}\setminus\{0\}$ satisfying
$a\cdot d=d_{\alpha}+d_{\beta}-d_1-2$. Then
\begin{equation}\nonumber
\begin{split}
\frac{\partial^a \tilde{\omega}^{\alpha\beta}}{\partial  z^a}(q)&=
\sum_{\mu,\nu=1}^n
\sum_{\begin{subarray}{c}a=a'+a''\\a',a''\neq 0\end{subarray}}
\frac{a!}{a'! a''!}
\underbrace{\frac{\partial^{a'}J^{\alpha}_{\mu}}{\partial  z^{a'}}(q)
\frac{\partial^{a''}J^{\alpha}_{\mu}}{\partial  z^{a''}}(q)}_{(\star)}
\langle z^{\mu},z^{\nu}\rangle^*
\\
&+\sum_{\nu=1}^n \frac{\partial^a J^{\beta}_{\nu}}{\partial z^a}(q)
\langle z^{\alpha},z^{\nu}\rangle^*
+\sum_{\mu=1}^n \frac{\partial^a J^{\alpha}_{\mu}}{\partial z^a}(q)
\langle z^{\mu},z^{\beta}\rangle^*
\end{split}
\end{equation}
We show that each term in the RHS is zero.
Start with the first term.
If $(\star)\neq 0$,  then by Lemma \ref{lemma2} (2), 
$$
\begin{cases}
a'\cdot d+d_{\mu}=d_{\alpha}+k_1 d_1\quad (k_1\geq 1)\\
a''\cdot d+d_{\nu}=d_{\beta}+k_2d_1\quad(k_2\geq 1)
\end{cases}~.
$$
Adding these, we have
\begin{equation}\nonumber
\begin{split}
a\cdot d&=d_{\alpha}+d_{\beta}-d_{\mu}-d_{\nu}+(k_1+k_2)d_1
\\&=d_{\alpha}+d_{\beta}-2+(k_1+k_2-1)d_1>d_{\alpha}+d_{\beta}-2-d_1.
\end{split}
\end{equation}
In the second equality,  we replaced $d_{\mu}+d_{\nu}$ by $d_1+2$.
This is allowed because $\langle z^{\mu},z^{\nu}\rangle^*$
vanishes if $d_{\mu}+d_{\nu}\neq  d_1+2$.
Therefore,  the assumption
$a\cdot d=d_{\alpha}+d_{\beta}-2-d_1$ implies that $(\star)=0$.
In the similar fashion,  we can show that the other terms vanish.
\end{proof}

\begin{lemma}
\begin{enumerate}
\item For $1\leq \alpha,\beta\leq n$,
$$
\tilde{\eta}^{\alpha\beta}=\frac{d_1}{z^1(q)}\langle z^{\alpha},z^{\beta}\rangle^*~.
$$
\item 
If $d_{\alpha}+d_{\beta}\neq d_1+2$, $\tilde{\eta}^{\alpha\beta}=\eta_{\alpha\beta}=0$.
\end{enumerate}
\end{lemma}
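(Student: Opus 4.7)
The plan is to analyze $\tilde{\omega}^{\alpha\beta} = \tilde{\eta}^{\alpha\beta} x^1 + \upsilon^{\alpha\beta}$ case by case according to the weighted degree of $\tilde{\eta}^{\alpha\beta}$, which is $d_\alpha + d_\beta - 2 - d_1$ in the grading $\deg x^\mu = d_\mu$. Three facts will be used throughout. By Lemma \ref{compatible-x-form}, $x^1(q) = z^1(q)/d_1 \neq 0$ while $x^\mu(q) = 0$ for $\mu \geq 2$, so every element of $\mathbb{C}[x']$ of positive weighted degree vanishes at $q$; in particular $\upsilon^{\alpha\beta}(q) = 0$ since its weighted degree $d_\alpha + d_\beta - 2$ is at least $2$ for Coxeter groups. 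Lemma \ref{tilde-omega-q} supplies both the value $\tilde{\omega}^{\alpha\beta}(q) = \langle z^\alpha, z^\beta\rangle^*$ and the vanishing of certain mixed $z$-derivatives at $q$.

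The two easy cases are immediate. When $d_\alpha + d_\beta < d_1 + 2$, the weighted degree of $\tilde{\eta}^{\alpha\beta}$ is negative, forcing $\tilde{\eta}^{\alpha\beta} = 0$; and Lemma \ref{metric1} gives $\langle z^\alpha, z^\beta\rangle^* = 0$, so both sides of (1) vanish. When $d_\alpha + d_\beta = d_1 + 2$, $\tilde{\eta}^{\alpha\beta}$ is a constant, and combining Lemma \ref{tilde-omega-q} (1) with $\upsilon^{\alpha\beta}(q) = 0$ yields $\tilde{\eta}^{\alpha\beta} \cdot z^1(q)/d_1 = \langle z^\alpha, z^\beta\rangle^*$, i.e., the claimed formula.

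The supercritical case $d_\alpha + d_\beta > d_1 + 2$ is the main obstacle. Here I would write $\tilde{\eta}^{\alpha\beta} = \sum_{a \in \mathcal{Z},\, a \cdot d = d_\alpha + d_\beta - 2 - d_1} A_a x^a$ and show that every $A_a$ vanishes. For each such $a$ (all with $|a| \geq 1$), apply $\partial^a/\partial z^a$ to $\tilde{\omega}^{\alpha\beta}$ and evaluate at $q$; the left side is zero by Lemma \ref{tilde-omega-q} (2). In the Leibniz expansion of $\partial^a(\tilde{\eta}^{\alpha\beta} x^1)/\partial z^a$, the compatibility condition together with $a \in \mathcal{Z}$ rules out all $|a_2| = 1$ splits, and Lemma \ref{x-derivatives} combined with $a_2 \cdot d \leq a \cdot d < d_1$ rules out $|a_2| \geq 2$; so only $a_2 = 0$ survives, giving $(\partial^a \tilde{\eta}^{\alpha\beta}/\partial z^a)(q) \cdot x^1(q)$, which collapses to $A_a \cdot a! \cdot z^1(q)/d_1$ by Lemma \ref{a-b-improved} and Lemma \ref{a-b} (2), since among the monomials $x^b$ appearing in $\tilde{\eta}^{\alpha\beta}$ the constraint $b \cdot d = a \cdot d$ forces $b = a$. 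The $\upsilon^{\alpha\beta}$ contribution vanishes by Lemma \ref{a-b-improved}, because for its monomials $x^c$ one has $c \cdot d = a \cdot d + d_1 > a \cdot d$, ruling out both $a = c$ and the $|a| > |c|$ congruence alternative. Hence $A_a = 0$ for all $a$, so $\tilde{\eta}^{\alpha\beta} = 0$, and Lemma \ref{metric1} again gives $\langle z^\alpha, z^\beta\rangle^* = 0$.

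Part (2) will then follow from (1) via the pairing $d_\alpha + d_{n+1-\alpha} = d_1 + 2$: since $\tilde{\eta}^{\alpha\beta}$ vanishes unless $d_\alpha + d_\beta = d_1 + 2$, the matrix $\tilde{\eta}$ is block anti-diagonal in the decomposition by degree, and the inverse $\eta$ inherits this block structure by a standard linear-algebra argument (permute indices to make $\tilde{\eta}$ block-diagonal, invert blockwise, then permute back), giving $\eta_{\alpha\beta} = 0$ whenever $d_\alpha + d_\beta \neq d_1 + 2$.
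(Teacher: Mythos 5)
Your proposal is correct and follows essentially the same route as the paper: expand $\tilde{\eta}^{\alpha\beta}$ in monomials of $\mathbb{C}[x']$, get the constant term by evaluating $\tilde{\omega}^{\alpha\beta}$ at $q$ via Lemma \ref{tilde-omega-q} (1), and kill the nonconstant coefficients by differentiating by $z^a$, invoking Lemma \ref{tilde-omega-q} (2) together with Lemmas \ref{a-b-improved} and \ref{a-b} (2). The paper handles all degree cases in one computation rather than splitting into sub/critical/supercritical cases, and disposes of the $|a''|\geq 1$ Leibniz terms via Lemma \ref{a-b-improved} on the $x^b$-factor where you use compatibility and Lemma \ref{x-derivatives} on the $x^1$-factor, but these are cosmetic differences.
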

\begin{proof}
(1)
Given that $\tilde{\omega}^{\alpha\beta}$ is  a $G$-invariant homogeneous polynomial of degree
$d_{\alpha}+d_{\beta}-2$,
$\tilde{\omega}^{\alpha\beta}$ is written in the following form:
\begin{equation}\label{tilde-omega2}
\tilde{\omega}^{\alpha\beta}
=
\tilde{\eta}^{\alpha\beta} x^1+
\sum_{\begin{subarray}{c}c\in \mathcal{Z};\\
c\cdot d=d_{\alpha}+d_{\beta}-2\end{subarray}}
B'_c x^c~,
\quad
\tilde{\eta}^{\alpha\beta}=\sum_{\begin{subarray}{c}b\in \mathcal{Z};\\
b\cdot d=d_{\alpha}+d_{\beta}-2-d_1\end{subarray}}
B_b x^b ~.
\end{equation}
Here $B_b,B'_c$'s are some constants.
Therefore we only have to show that
$$
B_a=\begin{cases}
\frac{d_1}{z^1(q)}
\langle z^{\alpha},z^{\beta}\rangle^*&(a=0)\\
0&(a\in  \mathcal{Z}\setminus\{0\}, a\cdot d=d_{\alpha}+d_{\beta}-2-d_1)
\end{cases}~~.
$$

To compute $B_0$,
we evaluate  
\eqref{tilde-omega2} at $q$, compare with Lemma \ref{tilde-omega-q} (1)
and  obtain
$$
\tilde{\omega}^{\alpha\beta}(q)
=B_0 x^1(q)=\langle  z^{\alpha},z^{\beta}\rangle^*~.$$
Since $x^1(q)=\frac{z^1(q)}{d_1}$ by Lemma  \ref{compatible-x-form},
$B_0=\frac{d_1}{z^{1}(q)}\langle  z^{\alpha},z^{\beta}\rangle^*$.

Next take $a\in \mathcal{Z}\setminus\{0\}$ satisfying $a\cdot d=d_{\alpha}+d_{\beta}-2-d_1$.
Differentiating \eqref{tilde-omega2} by $z^a$
and evaluating at $q$, 
we obtain
\begin{equation}\nonumber
\begin{split}
0=\frac{\partial^a \tilde{\omega}^{\alpha\beta}}{\partial z^a}(q)
&=\sum_{\begin{subarray}{c}b\in \mathcal{Z};\\
b\cdot d=d_{\alpha}+d_{\beta}-2-d_1\end{subarray}}
B_b 
\Bigg(
\frac{\partial x^b}{\partial z^a}(q) x^1(q)
\\&+
\sum_{\begin{subarray}{c}a'+a''=a\\
|a''|=1\end{subarray}}
\frac{a!}{a'!a''!}
\frac{\partial^{a'} x^b}{\partial z^{a'}} (q)
\frac{\partial^{a''}  x^1}{\partial z^{a''}}(q)
+
\sum_{\begin{subarray}{c}a'+a''=a\\
|a''|\geq 2\end{subarray}}
\frac{a!}{a'!a''!}
{\frac{\partial^{a'} x^b}{\partial z^{a'}} (q)
\frac{\partial^{a''}  x^1}{\partial z^{a''}}(q)}
\Bigg)
\\&+
\sum_{\begin{subarray}{c}c\in \mathcal{Z};\\
c\cdot d=d_{\alpha}+d_{\beta}-2\end{subarray}}
B'_c 
{\frac{\partial^a x^c}{\partial z^a}}(q)~.
\end{split}
\end{equation}
In the RHS,
the first term does not vanish only if $b=a$,
and 
the other terms vanish by Lemma \ref{a-b-improved}.
Comparing with Lemma \ref{tilde-omega-q} (2), 
we have
$$
\frac{\partial^a \tilde{\omega}^{\alpha\beta}}{\partial z^a}(q)
=
B_a \frac{\partial  z^a}{\partial  z^a}(q) x^1(q)
=a! B_a x^1(q)=0~.
$$
Thus  $B_a=0$. 
\\
(2) follows from (1) and Lemma \ref{metric1}.
\end{proof}

\subsection{Potential function}
Let us set
\begin{equation}\label{def-hatx}
\hat{x}_{\alpha}=\sum_{\mu=1}^n x^{\mu}\eta_{\mu\alpha}
\end{equation}
\begin{lemma}\label{supple}
Let $1\leq \alpha,\beta,\gamma\leq n$.
For $a\in \mathcal{Z}$ satisfying $a\cdot d=2d_1+2-d_{\alpha}-d_{\beta}-d_{\gamma}$,
the following holds.
$$
\frac{1}{d_1+1-d_{\gamma}}\frac{\partial^a}{\partial z^a}\frac{\partial^2 \hat{x}_{\gamma}}{\partial  z^{\alpha}\partial z^{\beta}}(q)
=(\beta\leftrightarrow \gamma)~.
$$
\end{lemma}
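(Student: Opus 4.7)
The plan is to derive the identity from the compatibility of the multiplication with the metric, namely the symmetry \eqref{C-symmetry}
\[
\sum_{\lambda=1}^n C_{\alpha\beta}^{\lambda}\eta_{\lambda\gamma}=\sum_{\lambda=1}^n C_{\alpha\gamma}^{\lambda}\eta_{\lambda\beta},
\]
and then to read off the stated scalar identity by comparing coefficients of monomials $x^b$. Recall from the preceding subsection that $\eta_{\lambda\gamma}=0$ unless $d_\lambda+d_\gamma=d_1+2$, so whenever $\eta_{\lambda\gamma}\neq 0$ we have $d_\lambda-1=d_1+1-d_\gamma$.

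First I would substitute the formula from Theorem \ref{main-theorem2} for $C_{\alpha\beta}^\lambda$ into $\sum_\lambda C_{\alpha\beta}^\lambda\eta_{\lambda\gamma}$. Using the relation above to pull the factor $\tfrac{z^1(q)}{d_\lambda-1}=\tfrac{z^1(q)}{d_1+1-d_\gamma}$ outside the sum over $\lambda$, and noting that the constraint $b\cdot d=d_1+d_\lambda-d_\alpha-d_\beta$ becomes $b\cdot d=2d_1+2-d_\alpha-d_\beta-d_\gamma$, I rewrite
\[
\sum_\lambda C_{\alpha\beta}^\lambda\eta_{\lambda\gamma}
=\frac{z^1(q)}{d_1+1-d_\gamma}\sum_{\substack{b\in\mathcal Z\\ b\cdot d=2d_1+2-d_\alpha-d_\beta-d_\gamma}}
\frac{\partial^b}{\partial z^b}\!\left(\frac{\partial^2 \hat x_\gamma}{\partial z^\alpha\partial z^\beta}\right)\!(q)\cdot\frac{x^b}{b!},
\]
where the interior derivatives have been recombined into a derivative of $\hat x_\gamma=\sum_\mu x^\mu\eta_{\mu\gamma}$ by the definition \eqref{def-hatx}. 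The analogous expression for $\sum_\lambda C_{\alpha\gamma}^\lambda\eta_{\lambda\beta}$ is obtained by interchanging $\beta\leftrightarrow\gamma$; note that the set of summation indices $b$ is symmetric in $\beta,\gamma$ and so is identical on both sides.

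Finally, equating the two expressions via \eqref{C-symmetry} and using the algebraic independence of the basic invariants $x^1,\ldots,x^n$ (so that the monomials $\{x^b\mid b\in\mathbb Z_{\geq 0}^n\}$ are linearly independent in $S[V]^G$), the coefficients of each $x^b$ must match. This yields exactly the desired equality for every $a\in\mathcal Z$ with $a\cdot d=2d_1+2-d_\alpha-d_\beta-d_\gamma$. I expect no serious obstacle: the main care is in bookkeeping the degree constraints so that the two sums range over precisely the same index set, and in verifying that the denominators $d_1+1-d_\gamma$ and $d_1+1-d_\beta$ are nonzero (which is automatic since $d_\alpha\leq d_1<d_1+1$ for all $\alpha$).
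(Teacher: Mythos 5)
Your proposal is correct and follows essentially the same route as the paper: substitute the expression of Theorem \ref{main-theorem2} into $\sum_{\lambda}C_{\alpha\beta}^{\lambda}\eta_{\lambda\gamma}$, use the vanishing of $\eta_{\lambda\gamma}$ unless $d_{\lambda}+d_{\gamma}=d_1+2$ to replace $d_{\lambda}$ by $d_1+2-d_{\gamma}$ and recombine the sum over $\lambda$ into $\hat{x}_{\gamma}$, then invoke \eqref{C-symmetry}. Your explicit appeal to the linear independence of the monomials $x^b$ to compare coefficients is just a spelled-out version of the paper's final step.
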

\begin{proof}
By the expression of $C_{\alpha\beta}^{\gamma}$ in Theorem \ref{main-theorem2},
 we have
$$ \sum_{\lambda=1}^n C_{\alpha\beta}^{\lambda}\eta_{\lambda\gamma}
 =
 \sum_{\lambda=1}^n
 \frac{z^1(q)}{d_{\lambda}-1}\sum_{\begin{subarray}{c}
 a\in \mathcal{Z};\\
 a\cdot d=d_1+d_{\lambda}-d_{\alpha}-d_{\beta}
 \end{subarray}}
 \frac{\partial^a}{\partial z^a}\left(
 \frac{\partial^2 x^{\lambda}}{\partial z^{\alpha}\partial z^{\beta}}(q)
 \frac{x^a}{a!}
 \right) \eta_{\lambda\gamma}~.
 $$
 In the RHS,
 $d_{\lambda}$ can be replaced by $d_1+2-d_{\gamma}$
 because  of  the factor $\eta_{\lambda\gamma}$.
Therefore
 $$\sum_{\lambda=1}^n C_{\alpha\beta}^{\lambda}\eta_{\lambda\gamma}
=
 \frac{z^1(q)}{d_1+1-d_{\gamma}}
 \sum_{\begin{subarray}{c}
 a\in \mathcal{Z};\\
 a\cdot d=d_1+d_{\lambda}-d_{\alpha}-d_{\beta}
 \end{subarray}}
 \frac{\partial^a}{\partial z^a}\frac{\partial^2 \hat{x}_{\gamma}}{\partial  z^{\alpha}\partial z^{\beta}}
 (q) 
\frac{x^a}{a!}~.
$$ 
Then the statement follows from  \eqref{C-symmetry}.
 \end{proof}
 
 Let us put
$$
I_{\text{cox}}=\{a\in \mathbb{Z}_{\geq 0}^n \mid a\cdot d=2d_1+2, |a|\geq 3\}~,
\quad 
l(a)=\#\{\alpha\mid a_{\alpha}\neq 0\} \quad (a\in \mathbb{Z}_{\geq 0}^n)~,
$$
and
\begin{equation}\label{potential-F}
\mathcal{F}=\sum_{\lambda=1}^n \frac{z^1(q)}{d_1+1-d_{\lambda}}
\sum_{\begin{subarray}{c}b\in I_{\rm{cox}};\\
b_{\lambda}\geq 1
 \end{subarray}}
 \frac{\partial^{b-\bm{e}_{\lambda}}\hat{x}_{\lambda}}{\partial z^{b-\bm{e}_{\lambda}}}
 (q)\cdot \frac{x^b}{b! l(b)}~.
\end{equation}
\begin{theorem}\label{thm:potential}
We have
$$
\frac{\partial \mathcal{F}}{\partial x^{\gamma}}=\sum_{\lambda=1}^n
\mathcal{G}^{\lambda}\eta_{\lambda\gamma}~.
$$
In other words,  we have
$$
\frac{\partial^3 \mathcal{F}}{\partial x^{\alpha}\partial x^{\beta}\partial x^{\gamma}}
=\sum_{\lambda=1}^n C_{\alpha\beta}^{\lambda}\eta_{\lambda\gamma}~.
$$
\end{theorem}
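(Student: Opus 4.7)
The plan is to prove the first identity $\partial\mathcal{F}/\partial x^\gamma=\sum_\lambda \mathcal{G}^\lambda\eta_{\lambda\gamma}$; the second identity then follows immediately by two further differentiations in $x^\alpha,x^\beta$ and Corollary \ref{vector-potential}, which gives $\partial^2 \mathcal{G}^\lambda/\partial x^\alpha\partial x^\beta=C_{\alpha\beta}^\lambda$. The key step is to rewrite $\mathcal{F}$ as a single sum $\sum_{b\in I_{\rm{cox}}}T_b\, x^b/b!$, where for each $b\in I_{\rm{cox}}$ and each $\lambda$ with $b_\lambda\geq 1$ one puts
$$
T_b^{(\lambda)}:=\frac{z^1(q)}{d_1+1-d_\lambda}\,\frac{\partial^{b-\bm{e}_\lambda}\hat{x}_\lambda}{\partial z^{b-\bm{e}_\lambda}}(q).
$$
Once the independence $T_b^{(\lambda)}=T_b^{(\mu)}$ is established for all valid $\lambda,\mu$, I set $T_b:=T_b^{(\lambda)}$; the factor $1/l(b)$ in the definition of $\mathcal{F}$ then precisely cancels the $l(b)$-fold multiplicity coming from summation over valid $\lambda$, and the rewriting follows.

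The independence of $T_b^{(\lambda)}$ on $\lambda$, for $\lambda\neq\mu$ with $b_\lambda,b_\mu\geq 1$, is the main technical point and will be deduced from Lemma \ref{supple} applied with $\gamma=\lambda$, $\beta=\mu$, and a suitable $\alpha$, taking $a:=b-\bm{e}_\lambda-\bm{e}_\mu-\bm{e}_\alpha$. The degree constraint $a\cdot d=2d_1+2-d_\alpha-d_\mu-d_\lambda$ is automatic from $b\cdot d=2d_1+2$, and the $\beta\leftrightarrow\gamma$ symmetry stated in the lemma, rewritten via $b-\bm{e}_\lambda=a+\bm{e}_\alpha+\bm{e}_\mu$ and $b-\bm{e}_\mu=a+\bm{e}_\alpha+\bm{e}_\lambda$, is exactly $T_b^{(\lambda)}=T_b^{(\mu)}$. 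To pick $\alpha$ so that $a$ has nonnegative components and lies in $\mathcal{Z}$ (i.e.\ $a_1=0$), set $s:=(b-\bm{e}_\lambda-\bm{e}_\mu)_1$; if $s\geq 1$ take $\alpha=1$, and if $s=0$ take any $\alpha$ in the support of $b-\bm{e}_\lambda-\bm{e}_\mu$, which is nonempty by $|b|\geq 3$ and does not contain $1$ by $s=0$. In either case $a$ has nonnegative components and $a_1=0$, so Lemma \ref{supple} applies.

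Granted the well-definedness of $T_b$, differentiating $\mathcal{F}=\sum_{b\in I_{\rm{cox}}}T_b x^b/b!$ in $x^\gamma$ and substituting $a=b-\bm{e}_\gamma$ (with $T_{a+\bm{e}_\gamma}=T_{a+\bm{e}_\gamma}^{(\gamma)}$) yields
$$
\frac{\partial\mathcal{F}}{\partial x^\gamma}=\frac{z^1(q)}{d_1+1-d_\gamma}\sum_{\substack{a\cdot d=2d_1+2-d_\gamma\\ |a|\geq 2}}\frac{\partial^a\hat{x}_\gamma}{\partial z^a}(q)\,\frac{x^a}{a!}.
$$
On the target side, $\eta_{\lambda\gamma}=0$ unless $d_\lambda+d_\gamma=d_1+2$; on the surviving terms $d_\lambda-1=d_1+1-d_\gamma$ and $I_\lambda^{(1)}=\{a:a\cdot d=2d_1+2-d_\gamma,|a|\geq 2\}$, and folding $\eta_{\lambda\gamma}$ through via $\hat{x}_\gamma=\sum_\mu x^\mu\eta_{\mu\gamma}$ produces the same expression, matching both sides. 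The main obstacle is the case-analytic choice of $\alpha$ ensuring $a\in\mathcal{Z}$; beyond that, the argument reduces to the symmetry from Lemma \ref{supple} together with routine bookkeeping using the definitions of $\mathcal{F}$, $\mathcal{G}^\lambda$ and $\hat{x}_\gamma$.
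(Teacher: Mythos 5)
Your proposal is correct and follows essentially the same route as the paper: the heart of the argument in both is Lemma \ref{supple} applied to $a=b-\bm{e}_{\lambda}-\bm{e}_{\mu}-\bm{e}_{\alpha}$ with the same case split on the first component, the paper merely performing the conversion from $\hat{x}_{\lambda}$ to $\hat{x}_{\gamma}$ inside the differentiated sum rather than pre-establishing the $\lambda$-independence of $T_b^{(\lambda)}$ and rewriting $\mathcal{F}$ as a single sum. The only point you leave implicit is that $(b-\bm{e}_{\lambda}-\bm{e}_{\mu})_1\leq 1$, so that choosing $\alpha=1$ when $s\geq 1$ really does give $a_1=0$; this follows in one line from $(b-\bm{e}_{\lambda}-\bm{e}_{\mu})\cdot d=2d_1+2-d_{\lambda}-d_{\mu}<2d_1$.
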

\begin{proof} 
Differentiating \eqref{potential-F} by $x^{\gamma}$, we have
\begin{equation}\nonumber
\begin{split}
\frac{\partial \mathcal{F}}{\partial x^{\gamma}}
&=
\sum_{\lambda=1}^n \frac{z^1(q)}{d_1+1-d_{\lambda}}
\sum_{\begin{subarray}{c}b\in I_{\rm{cox}};\\
b_{\lambda}\geq 1,\\b_{\gamma}\geq 1
 \end{subarray}}
 \underbrace{
 \frac{\partial^{b-\bm{e}_{\lambda}}\hat{x}_{\lambda}}{\partial z^{b-\bm{e}_{\lambda}}}
 (q)
 }_{(\star)}\cdot \frac{b_{\gamma} x^{b-\bm{e}_{\gamma}}}{b! l(b)}
\end{split}
\end{equation}
When $\lambda\neq \gamma$, 
$b\in I_{\rm{cox}}$ appearing in $(\star)$
satisfies  $b_{\lambda},b_{\gamma}\geq 1$ and $|b|\geq 3$.
So if we put
$b':=b-\bm{e}_{\gamma}-\bm{e}_{\lambda}$,
$b'$ belongs to $\mathbb{Z}_{\geq 0}^n\setminus \{0\}$.
Moreover,  $b'_1=0,1$ since
 $$b'_1\cdot d_1\leq  b'\cdot d=b\cdot d-d_{\gamma}-d_{\lambda}\leq 2d_1-2<2d_1.$$
Then we can take some $1\leq \alpha\leq n$ such that
$b''=b'-\bm{e}_{\alpha}\in \mathcal{Z}$.
Indeed, there exists $2\leq \alpha\leq n$ such that $b'_{\alpha}>0$ when $b'_1=0$
and $\alpha=1$
when $b'_1=1$.
Therefore we have
$$(\star)=
\frac{\partial^{b''+\bm{e}_{\alpha}+\bm{e}_{\gamma}}\hat{x}_{\lambda}}
{\partial z^{b''+\bm{e}_{\alpha}+\bm{e}_{\gamma}}}(q)
=\frac{\partial^2}{\partial z^{\alpha}\partial z^{\gamma}}
\frac{\partial^{b''} \hat{x}_{\lambda}}{\partial z^{b''}}(q)~.
$$
Since $b''\cdot d=2d_1+2-d_{\gamma}-d_{\lambda}-d_{\alpha}$,
we can apply Lemma \ref{supple} and  we obtain
$$
(\star)=\frac{d_1+1-d_{\lambda}}{d_1+1-d_{\gamma}}\cdot
\frac{\partial^2}{\partial z^{\alpha}\partial z^{\lambda}}
\frac{\partial^{b''} \hat{x}_{\gamma}}{\partial z^{b''}}(q)
=
\frac{d_1+1-d_{\lambda}}{d_1+1-d_{\gamma}}\cdot
 \frac{\partial^{b-\bm{e}_{\gamma}}\hat{x}_{\gamma}}{\partial z^{b-\bm{e}_{\gamma}}}
 (q).
$$
Thus
\begin{equation}\nonumber
\begin{split}
\frac{\partial \mathcal{F}}{\partial x^{\gamma}}
&=\frac{z^1(q)}{d_1+1-d_{\gamma}}
\sum_{\lambda=1}^n 
\sum_{\begin{subarray}{c}b\in I_{\rm{cox}};\\
b_{\lambda}\geq 1,\\b_{\gamma}\geq 1
 \end{subarray}}
 \frac{\partial^{b-\bm{e}_{\gamma}}\hat{x}_{\gamma}}{\partial z^{b-\bm{e}_{\gamma}}}
 (q)
 \cdot \frac{b_{\gamma} x^{b-\bm{e}_{\gamma}}}{b! l(b)}
 \\
 &=\frac{z^1(q)}{d_1+1-d_{\gamma}}
\sum_{\begin{subarray}{c}b\in I_{\rm{cox}};\\b_{\gamma}\geq 1
 \end{subarray}}
 \underbrace{\#\{\lambda\mid b_{\lambda}\geq 1\}}_{=l(b)}\cdot
 \frac{\partial^{b-\bm{e}_{\gamma}}\hat{x}_{\gamma}}{\partial z^{b-\bm{e}_{\gamma}}}
 (q)
 \cdot \frac{b_{\gamma} x^{b-\bm{e}_{\gamma}}}{b! l(b)}
 \\
 &=\frac{z^1(q)}{d_1+1-d_{\gamma}}
\sum_{\begin{subarray}{c}b\in I_{\rm{cox}};\\b_{\gamma}\geq 1
 \end{subarray}}
 \frac{\partial^{b-\bm{e}_{\gamma}}\hat{x}_{\gamma}}{\partial z^{b-\bm{e}_{\gamma}}}
 (q)
 \cdot \frac{b_{\gamma} x^{b-\bm{e}_{\gamma}}}{b!}~.
\end{split}
\end{equation}
If we write $a=b-\bm{e}_{\gamma}$,
then 
$a\cdot d=2d_1+2-d_{\gamma}$ and $|a|\geq 2$, which implies  that
$a\in I_{n+1-\gamma}^{(1)}$.
Thus we obtain
$$
\frac{\partial \mathcal{F}}{\partial x^{\gamma}}
=\frac{z^1(q)}{d_1+1-d_{\gamma}}
\sum_{\begin{subarray}{c}a\in I_{n+1-\gamma}^{(1)}
 \end{subarray}}
 \frac{\partial^a \hat{x}_{\gamma}}{\partial z^a}
 (q)
 \cdot \frac{x^a}{a!}~.$$
 
 On the other  hand,
 \begin{equation}\nonumber
 \begin{split}
 \sum_{\lambda=1}^n \mathcal{G}^{\lambda}\eta_{\lambda\gamma}
 &=
 \sum_{\lambda=1}^n \frac{z^1(q)}{d_{\lambda}-1}
 \sum_{a\in I_{\lambda}^{(1)}}  \frac{\partial^a x^{\lambda}}{\partial  z^a}(q)
 \frac{x^a}{a!}\cdot \eta_{\lambda\gamma}~.
 \end{split}
 \end{equation}
Because of  the factor $\eta_{\lambda\gamma}$,
we can replace $d_{\lambda}$ with $d_1+2-d_{\gamma}$.
Moreover $I_{\lambda}^{(1)}=I_{n+1-\gamma}^{(1)}$  since
$d_{n+1-\gamma}=d_1+2-d_{\gamma}$.
Thus we obtain the statement.
\end{proof}

\subsection{Satake's expression for $C_{\alpha\beta}^{\gamma}$}

\begin{proposition}\label{C-Satake}
When $G$ is an irreducible finite Coxeter group, 
$C_{\alpha\beta}^{\gamma}$ in Theorem \ref{main-theorem2} can be
written as follows.
\begin{equation}\nonumber
C_{\alpha\beta}^{\gamma}
=\frac{z^1(q)}{d_1+d_{\gamma}-d_{\beta}}
\sum_{\begin{subarray}{c}a\in \mathcal{Z};
\\a\cdot d=d_1+d_{\gamma}-d_{\alpha}-d_{\beta}\end{subarray}}
\frac{\partial^a}{\partial z^a}
\frac{\partial}{\partial z^{\alpha}}\left(
\frac{\partial x^{\gamma}}{\partial z^{\beta}}
+\sum_{\lambda=1}^n \tilde{\eta}^{\lambda\gamma}\frac{\partial \hat{x}_{\beta}}{\partial z^{\lambda}}
\right)(q)
\cdot \frac{x^a}{a!}~.
\end{equation} 
See \eqref{def-hatx} for the definition of  $\hat{x}_{\beta}$.
\end{proposition}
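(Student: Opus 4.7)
The plan is to exhibit Satake's formula as the weighted average of the expression in Theorem \ref{main-theorem2} and a second expression for $C_{\alpha\beta}^\gamma$ derived from the metric compatibility \eqref{C-symmetry}. The decisive input is that the matrices $\tilde\eta^{\alpha\beta}$ and $\eta_{\alpha\beta}$ are constants, supported only on index pairs whose degrees sum to $d_1+2$.

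First I would derive the second formula for $C_{\alpha\beta}^\gamma$. Multiplying \eqref{C-symmetry} by $\tilde\eta^{\rho\gamma}$ and summing over $\rho$ yields
$$
C_{\alpha\beta}^\gamma=\sum_{\lambda,\mu}\tilde\eta^{\lambda\gamma}\eta_{\mu\beta}\,C_{\alpha\lambda}^\mu.
$$
Substituting Theorem \ref{main-theorem2} for $C_{\alpha\lambda}^\mu$, I would exploit that $\eta_{\mu\beta}\ne 0$ forces $d_\mu=d_1+2-d_\beta$, so $\tfrac{1}{d_\mu-1}$ pulls out as the constant $\tfrac{1}{d_1+1-d_\beta}$, and that $\tilde\eta^{\lambda\gamma}\ne 0$ forces $d_\lambda=d_1+2-d_\gamma$, so that the degree condition $a\cdot d=d_1+d_\mu-d_\alpha-d_\lambda$ reduces to $a\cdot d=d_1+d_\gamma-d_\alpha-d_\beta$, matching the index set in the target formula. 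Because $\tilde\eta$ and $\eta$ are constant, they commute with $\partial^a/\partial z^a$, and the inner sum collapses via the definition \eqref{def-hatx} of $\hat{x}_\beta$ to $\frac{\partial}{\partial z^\alpha}\sum_\lambda\tilde\eta^{\lambda\gamma}\frac{\partial\hat x_\beta}{\partial z^\lambda}$. This produces
$$
C_{\alpha\beta}^\gamma=\frac{z^1(q)}{d_1+1-d_\beta}\sum_{\substack{a\in\mathcal{Z};\\ a\cdot d=d_1+d_\gamma-d_\alpha-d_\beta}}\frac{\partial^a}{\partial z^a}\frac{\partial}{\partial z^\alpha}\sum_{\lambda=1}^n\tilde\eta^{\lambda\gamma}\frac{\partial\hat x_\beta}{\partial z^\lambda}(q)\cdot\frac{x^a}{a!}.
$$

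To conclude, I would multiply Theorem \ref{main-theorem2} by $d_\gamma-1$ and the above by $d_1+1-d_\beta$, add them, and divide by $(d_\gamma-1)+(d_1+1-d_\beta)=d_1+d_\gamma-d_\beta$. Using $\frac{\partial^2 x^\gamma}{\partial z^\alpha\partial z^\beta}=\frac{\partial}{\partial z^\alpha}\frac{\partial x^\gamma}{\partial z^\beta}$, the two integrand contributions fit inside a single $\frac{\partial}{\partial z^\alpha}$ acting on the bracketed combination of the statement, yielding Satake's formula. The only delicate point is verifying that both sums range over the identical index set $\{a\in\mathcal{Z}:a\cdot d=d_1+d_\gamma-d_\alpha-d_\beta\}$; this is precisely ensured by the degree support of $\tilde\eta$ and $\eta$ noted above, and the rest of the argument is purely formal index manipulation.
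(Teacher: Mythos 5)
Your proposal is correct and follows essentially the same route as the paper: both derive the second expression $C_{\alpha\beta}^{\gamma}=\sum_{\lambda,\mu}C_{\alpha\lambda}^{\mu}\eta_{\mu\beta}\tilde{\eta}^{\lambda\gamma}$ from \eqref{C-symmetry}, substitute Theorem \ref{main-theorem2} using the degree support of $\eta$ and $\tilde{\eta}$ to pull out $\tfrac{1}{d_1+1-d_{\beta}}$ and collapse the inner sum via $\hat{x}_{\beta}$, and then take the weighted combination with weights $d_{\gamma}-1$ and $d_1+1-d_{\beta}$ summing to $d_1+d_{\gamma}-d_{\beta}$.
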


\begin{proof}
By Theorem \ref{main-theorem2},  we have
\begin{equation}\label{C2}
\begin{split}
\sum_{\lambda,\mu=1}^n C_{\alpha\lambda}^{\mu}
\,\eta_{\mu\beta}\,
\tilde{\eta}^{\lambda\gamma}
&=
\sum_{\lambda,\mu=1} \frac{z^1(q)}{d_{\mu}-1}
\frac{\partial^a}{\partial z^a}\left(
\frac{\partial^2  x^{\mu}}{\partial z^{\alpha}\partial z^{\lambda}}
\right)(q) \frac{x^a}{a!}\cdot \eta_{\mu\beta}\cdot \tilde{\eta}^{\lambda\gamma}
\\
&=\frac{z^1(q)}{d_1+1-d_{\beta}}
\frac{\partial^a}{\partial z^a}\frac{\partial}{\partial z^{\alpha}}
\left(\sum_{\lambda=1}^n \tilde{\eta}^{\lambda\gamma}
\frac{\partial  \hat{x}_{\beta}}{\partial z^{\lambda}}
\right)(q) \frac{x^a}{a!}~.
\end{split}
\end{equation}
In passing to the  second line,
we replaced  $d_{\mu}$ by $d_1+2-d_{\beta}$.
This is allowed because of the factor $\eta_{\mu\beta}$.

By \eqref{C-symmetry},  it holds that
$$
C_{\alpha\beta}^{\gamma}=\sum_{\lambda,\mu=1}^n C_{\alpha\lambda}^{\mu}
\,\eta_{\mu\beta}\,
\tilde{\eta}^{\lambda\gamma}~.
$$
Therefore
\begin{equation}\nonumber 
\begin{split}
(d_{\gamma}-1)C_{\alpha\beta}^{\gamma}
+(d_1+1-d_{\beta})\sum_{\lambda,\mu=1}^n C_{\alpha\lambda}^{\mu}
\,\eta_{\mu\beta}\,
\tilde{\eta}^{\lambda\gamma}
&=(d_1+d_{\gamma}-d_{\beta})C_{\alpha\beta}^{\gamma}~.
\end{split}
\end{equation}
Then the statement follows from Theorem  \ref{main-theorem2} and \eqref{C2}.
 \end{proof}

\begin{remark}
The expression in Proposition \ref{C-Satake}
agrees with Satake's expression \cite[Theorem 6.9]{Satake2020}.
\end{remark}

\section{Exceptional groups of rank $2$}
\label{sec:examples}
In this section, 
we give examples of admissible triplets and good basic invariants
for some exceptional groups of rank two.
The reader can find data for those groups, such as the degrees, 
generators and basic invariants in \cite[\S 6]{LehrerTaylor}.

Hereafter in this and the next sections, we change  superscripts such as 
$z^{\alpha},x^{\alpha}$
to subscripts  $z_{\alpha},x_{\alpha}$ to save space.

\subsection{$G_5$}
The duality group $G_5$ is generated by $r_1,r_2'$ where
\begin{equation}\nonumber
r_1=\frac{\omega}{2}\begin{bmatrix}
-1-i&1-i\\
-1-i&-1+i
\end{bmatrix}~,\quad
r_2'=\frac{\omega}{2}
\begin{bmatrix}
-1+i&1-i\\
-1-i&-1-i
\end{bmatrix}
\quad (\omega=e^{\frac{2\pi i}{3}})~.
\end{equation}
The degrees of $G_5$ are
\begin{equation}\nonumber
d_1=12,~~d_2=6~,
\end{equation}
and a set of basic invariants is 
$(f_{T}^3, t_T)$
where 
\begin{equation}\label{fT-tT}
f_T=u_1^4+2i\sqrt{3}u_1^2u_2^2+u_2^4,\quad
t_T=u_1^5 u_2-u_1 u_2^5~.
\end{equation}
Here $u_1,u_2$ are the standard coordinates of $V=\mathbb{C}^2$.

We take $g=(r_2'r_1)^{-1}$ as a $\zeta$-regular element with
$\zeta=e^{\frac{\pi i}{6}}$.
 Eigenvalues  of $g$ are  $e^{\frac{\pi i}{6}}$, $e^{\frac{7\pi i}{6}}$ and
corresponding eigenvectors are
$$
q_1=\frac{1}{\sqrt{2}}\begin{bmatrix}1\\1
\end{bmatrix}~,
\quad 
q_2=\frac{1}{\sqrt{2}}\begin{bmatrix}
-1\\1
\end{bmatrix}~.
$$
By substituting $q_1$ into the Jacobian, we see that
$$
\det\begin{bmatrix}
\frac{\partial x_1}{\partial u_1}&\frac{\partial x_1}{\partial u_2}\\
\frac{\partial x_2}{\partial u_1}&\frac{\partial x_2}{\partial u_2}
\end{bmatrix}(q_1)=12\neq 0~,
$$
which implies that $q_1$ is a regular vector.

We take $(g,\zeta,q_1)=((r_2'r_1)^{-1},e^{\frac{\pi i}{6}},q_1 )$ as an admissible triplet.
If we denote the coordinates associated to the basis $q_1,q_2$ by $z_1,z_2$, then
$z=(z_1,z_2)$ is a $(g,\zeta)$-graded coordinate system.
Now we find a  set of good basic invariants using the method in the proof of Theorem \ref{existence-good}.
Substituting the relations
\begin{equation}\nonumber
\begin{split}
u_1=\frac{z_1-z_2}{\sqrt{2}}~,
\quad
u_2=\frac{z_1+z_2}{\sqrt{2}}~,
\end{split}
\end{equation}
into $f_T^3$ and $t_T$, we obtain
\begin{equation}\nonumber
\begin{split}
f_T^3&=-z_1^{12}+6i\sqrt{3}z_1^{10}z_2^2+
33z_1^8z_2^4-12i \sqrt{3} z_1^6z_2^6
\\&+33z_1^4z_2^8+6i\sqrt{3} z_1^2 z_2^{10}-z_2^{12}~,
\\
t_T&=-z_1^5 z_2+z_1 z_2^5~.
\end{split}
\end{equation}
It is easy to check that $(-\frac{f_T^3}{12},-t_T)$ is compatible with $z$ at $q_1$.

Let  us impose the goodness condition (Definition \ref{def-good}).
$I_{\alpha}^{(0)}$ ($\alpha=1,2$) defined in \eqref{defIalpha} are
$I_{1}=\{(0,2)\}$ and $I_2=\emptyset$.
We assume that
$$
x_1=-\frac{1}{12}(f_T^3+A \, t_T^2)~,
\quad 
x_2=-t_T
$$
are good basic invariants.
The goodness condition 
becomes
$$
\frac{\partial^2 x_1}{\partial z_2^2}(q_1)=
-\frac{1}{6}(6i\sqrt{3}+A)=0~.
$$
Thus we have $
A=-6i\sqrt{3}
$ and 
\begin{equation}\label{G5-good}
\begin{split}
x_1&=-\frac{1}{12}\left(f_T^3-6i\sqrt{3}\,t_T^2\right)
=\frac{1}{12}\left(
z_1^{12}-33z_1^8z_2^4-33z_1^4z_2^8+z_2^{12}
\right)~,
\\ 
x_2&=-t_T=z_1^5 z_2-z_1 z_2^5~.
\end{split}
\end{equation}
form a set of  good basic invariants compatible with $z$ at $q_1$.
The potential vector field $\mathcal{G}^{\gamma}$ in Corollary \ref{vector-potential} 
is given by 
\begin{equation}\label{G5-product}
\mathcal{G}^1=\dfrac{x_1^2}{2}-\dfrac{x_2^4}{4},\quad
\mathcal{G}^2=x_1x_2.
\end{equation}
The good basic invariants \eqref{G5-good} and
$C_{\alpha\beta}^{\gamma}
$'s  
obtained from \eqref{G5-product} 
are in agreement with the results in \cite[Table C6]{KMS2018}.
\subsection{$G_6$}
The duality group $G_6$ is generated by $r,r_1$ where
\begin{equation}\nonumber
r=\begin{bmatrix}
1&0\\0&-1
\end{bmatrix}~,\quad
r_1=\frac{\omega}{2}\begin{bmatrix}
-1-i&1-i\\
-1-i&-1+i
\end{bmatrix}\quad
(\omega=e^{\frac{2\pi i}{3}})~.
\end{equation}
The degrees are
\begin{equation}\nonumber
d_1=12,~~d_2=4~,
\end{equation}
and a set of basic invariants is $(t_T^2,f_T)$,
where $f_T,t_T$ are those defined in \eqref{fT-tT}.

We take $g=(r_1r)^{-1}$
as a $\zeta$-regular element with $\zeta=e^{\frac{\pi i}{6}}$.
Eigenvalues of $g$ are  $e^{\frac{\pi i}{6}}$,$-i$
and corresponding eigenvectors are
$$
q_1=\begin{bmatrix}
\frac{(1-i)\sqrt{3+\sqrt{3}}}{2\sqrt{3}}\\
\frac{\sqrt{3-\sqrt{3}}}{\sqrt{6}}
\end{bmatrix}~,
\quad 
q_2=\begin{bmatrix}
\frac{(-1+i)\sqrt{3-\sqrt{3}}}{2\sqrt{3}}\\
\frac{\sqrt{3+\sqrt{3}}}{\sqrt{6}}
\end{bmatrix}~,
$$
and $q_1$ is regular.
We take $(g,\zeta,q)=((r_1r)^{-1}, e^{\frac{\pi i}{6}},q_1)$
as an admissible triplet.
The $(g,\zeta)$-graded coordinates $z_1,z_2$ 
associated to $q_1,q_2$
are related to 
the standard coordinates $u_1,u_2$ by
\begin{equation}\nonumber
\begin{split}
u_1&=\frac{(1-i)}{2\sqrt{3}}
\left(\sqrt{3+\sqrt{3}}\,z_1-\sqrt{3-\sqrt{3}} \,z_2\right)~,
\\
u_2&=\frac{1}{\sqrt{6}}
\left(\sqrt{3-\sqrt{3}}\,z_1+\sqrt{3+\sqrt{3}}\, z_2\right)~.
\end{split}
\end{equation}
Then the set of basic invariants which is good with respect to $((r_1r)^{-1}, e^{\frac{\pi i}{6}},q_1)$ and which is compatible with $z$ at $q_1$ is given by
\begin{equation}\label{G6-good}
\begin{split}
x_1&=\frac{9i}{8}\left(t_T^2+\frac{5i}{96\sqrt{3}}f_T^3\right)
=\frac{z_1^{12}}{12}+\frac{11}{4}z_1^6z_2^6-\frac{55}{24\sqrt{2}}z_1^3z_2^9
+\frac{z_2^{12}}{32},
\\
x_2&=\frac{\sqrt{6}}{8} f_T=z_1^3z_2+\frac{z_2^4}{2\sqrt{2}}~.
\end{split}
\end{equation}
The potential vector field $\mathcal{G}^{\gamma}$ in Corollary \ref{vector-potential} 
is given by 
\begin{equation} \label{G6-product}
\mathcal{G}^1=\dfrac{x_1^2}{2}+\dfrac{x_2^6}{4},\quad
\mathcal{G}^2=x_1x_2+\dfrac{x_2^4}{6\sqrt{2}}~.
\end{equation}
The good basic invariants \eqref{G6-good} and 
$C_{\alpha\beta}^{\gamma}
$'s  
obtained from \eqref{G6-product} 
are in agreement with the results in \cite[Table C6]{KMS2018}.

\subsection{$G_7,G_{11},G_{19}$}
The nonduality groups of rank $2$, $G_{7},G_{11}$ and $G_{19}$, have $\mathfrak{a}(d_1)=\mathfrak{b}(d_1)=2$.
For these groups, see Remark \ref{ad=2}.

\subsection{$G_{12}, G_{13},G_{22}$}
The nonduality groups of rank $2$, $G_{12},G_{13}$ and $G_{22}$, have $\mathfrak{a}(d_1)=\mathfrak{b}(d_1)=1$.
For any of these groups, $I_1=I_2=\emptyset$
since $d_2$ does not divide $d_1$.
Therefore the goodness condition is empty and hence
any set of basic invariants is good with respect to any admissible triplet.
(For basic invariants of these groups,  see \cite[\S 6.6]{LehrerTaylor}.)

We list an example of an admissible triplet $(g,\zeta,q)$ for each of these groups.
Let us set 
\begin{equation}\nonumber
\begin{split}
r_3&=\frac{1}{\sqrt{2}}\begin{bmatrix}1&-1\\-1&-1\end{bmatrix},\quad
r_3'=\frac{1}{\sqrt{2}}\begin{bmatrix}1&1\\1&-1\end{bmatrix}, \\
r_3''&=\frac{1}{\sqrt{2}}\begin{bmatrix}0&1+i\\1-i&0\end{bmatrix},\quad
r=\begin{bmatrix}1&0\\0&-1\end{bmatrix},\\
r'&=\begin{bmatrix}0&1\\1&0\end{bmatrix},\quad
r''=\begin{bmatrix}
\frac{1}{2}+\cos \frac{2\pi}{5}&\frac{1}{2}+i\cos \frac{2\pi}{5}\\
\frac{1}{2}-i\cos \frac{2\pi}{5}&-\frac{1}{2}-\cos \frac{2\pi}{5}
\end{bmatrix}~.
\end{split}
\end{equation}
\begin{itemize}
\item
For $G_{12}=\langle r_3,r_3',r_3''\rangle$, the degrees are $d_1=8,d_2=6$.
$$
g=r_3''r_3'r_3=
\frac{1}{\sqrt{2}}\begin{bmatrix}1+i&0\\0&-1+i\end{bmatrix},
\quad \zeta=e^{\frac{\pi i}{4}},\quad q=\begin{bmatrix}1\\0\end{bmatrix}~.
$$
\item
For $G_{13}=\langle r, r_3,r_3''\rangle $, the degrees are $d_1=12,d_2=8$.
$$
g=rr_3r_3''=\frac{1}{2}\begin{bmatrix}
-1+i&1+i\\1-i&1+i
\end{bmatrix},\quad \zeta=e^{\frac{\pi i}{6}},\quad
 q=\frac{1}{\sqrt{3-\sqrt{3}}}\begin{bmatrix}\frac{(1+i)(\sqrt{3}-1)}{2}\\1
\end{bmatrix}.
$$
\item
For $G_{22}=\langle r,r',r''\rangle $, the degrees are $d_1=20,d_2=12$.
\begin{equation}\begin{split}\nonumber
g&=r''r'r=\begin{bmatrix}
\frac{1}{2}+i\cos \frac{2\pi}{5}&-\frac{1}{2}-\cos \frac{2\pi}{5}\\
-\frac{1}{2}-\cos \frac{2\pi}{5}&-\frac{1}{2}+i\cos \frac{2\pi}{5}
\end{bmatrix}
,\quad 
\zeta=e^{\frac{\pi i}{10}},
\\
q&=
\frac{1}{\sqrt{5-\sqrt{5}+2\sqrt{5-2\sqrt{5}}}}
\begin{bmatrix}
-\frac{2+\sqrt{10-2\sqrt{5}}}{\sqrt{5}+1}\\1
\end{bmatrix}.
\end{split}
\end{equation}
\end{itemize}

\begin{remark}
Although the non-duality groups $G_{12},G_{13},G_{22}$ do not admit natural Saito structure, one can formally
define $\mathcal{G}^1,\mathcal{G}^2$ 
and $C_1,C_2$ by \eqref{eq:pvf} and  by the relation stated in Corollary \ref{vector-potential}.
Then for any of these groups, we have
$$
\mathcal{G}^1=\frac{x_1^2}{2},\quad \mathcal{G}^2=x_1x_2,\quad
C_1=\begin{bmatrix}1&0\\0&1\end{bmatrix},\quad 
C_2=\begin{bmatrix}0&0\\1&0\end{bmatrix}.
$$
These together with $\Gamma_1=\Gamma_2=O_2$
give a  trivial Saito structure.
\end{remark}

\section{$G(m,m, n)$ and $G(m,1,n)$ for $m\geq 2$}
\label{sec:examples2}
First, we list a set of good basic invariants of these groups for $n\leq 4$.

\subsection{$G(m,m,n+1)$ for $m\geq 2$}
\label{sec:mmn}
Let $r_{i}$ ($i=1,\ldots, n$) be the $(n+1)\times (n+1)$ matrix 
obtained from the identity matrix $I_{n+1}$ by exchanging its $i$-th 
and $(i+1)$-th rows. Let $\zeta_{nm}$ be a primitive $nm$-th root of unity
and let 
$$s:=
\left[
\begin{array}{c|c}
\begin{array}{cc}
0&\zeta_{nm}^{-n}\\
\zeta_{nm}^{n}&0
\end{array}
&0\\
\hline
0&I_{n-1}
\end{array}
\right].
$$
Then the group $G(m,m,n+1)$ is generated by $r_1,\ldots,r_{n},s$.
Let $e_i(u)$ be the elementary symmetric polynomial
of degree $i$ in $u=(u_1,\ldots, u_{n+1})$. We set 
$u^m:=\left( u_1^m,\ldots, u_{n+1}^m\right)$
and 
\begin{equation}\nonumber
\sigma_i:=\begin{cases}
e_{n+1-i}(u^m)\quad (1\leq i \leq n),\\
e_{n+1}(u)\qquad (i=n+1).
\end{cases}
\end{equation}
It is known that $\sigma_1,\ldots,\sigma_{n+1}$ form
a set of basic invariants of $G(m,m,n+1)$. 
The degrees of $G(m,m,n+1)$ are $m,2m,\ldots, nm$ and $n+1$. 
The highest degree is $nm$ since we assumed $m\geq 2$.
It is not so convenient 
to arrange the degrees in the descending order for $G(m,m,n+1)$.
So  we arrange the degrees in the following manner:
\begin{equation}\label{Gmmn-degree}
d_1=nm, ~~d_2=(n-1)m,~~\ldots, d_{n-1}=2m,~~d_n=m,~~d_{n+1}=n+1~.
\end{equation}

Let
$$
g:=(r_{n-1}\cdots r_1)(r_{n}\cdots r_1)s(r_2\cdots r_{n})
=
\left[
\begin{array}{c|c}
\begin{array}{cc}
0&I_{n-1}\\
\zeta_{nm}^{n}&0
\end{array}
&0\\
\hline
0&\zeta_{nm}^{-n}
\end{array}
\right].
$$
Then $g$ is a $d_1$-regular element. Its eigenvalues are
$\lambda_i=\zeta_{nm}^{1+(i-1)m}$ ($i=1,\ldots,n$)
and $\zeta_{nm}^{-n}$,  and eigenvectors are
$$
q_i=\frac{1}{\sqrt{n}} \begin{bmatrix}
1\\ \lambda_i\\ \lambda_i^2\\ \vdots \\ \lambda_i^{n-1}\\0
\end{bmatrix}\quad
(1\leq i \leq n),\qquad
q_{n+1}=\begin{bmatrix}0\\ \vdots\\0 \\1\end{bmatrix}.
$$
Then we have an admissible triplet $(g, \zeta_{nm}, q_1)$.
Let $z=(z_1,\ldots, z_{n+1})$ be the dual coordinates of 
$q_1,\ldots, q_{n+1}$. Below, we list 
the set of good basic invariants compatible with $z$ at $q_1$.  
Notice that $z_1,\ldots, z_n$ are linear combinations of $u_1,\ldots,u_n$
and $z_{n+1}=u_{n+1}$. 
Therefore
\begin{equation}\label{sigma}
\begin{cases}
\sigma_i=e_{n+1-i}(u_1^m,\ldots, u_n^m) +z_{n+1}^m e_{n-i} (u_1^m,\ldots, u_n^m)
&(1\leq i\leq n)\\
\sigma_{n+1}=u_1\cdots u_n\cdot z_{n+1}
\end{cases}
~.
\end{equation}
Given that $z_{n+1}(q_1)=0$, we have  the following remark,
which is helpful in the computations in examples.
\begin{remark}
\begin{enumerate}
\item
For $1\leq i\leq n$ and $a\in \mathbb{Z}_{\geq 0}^{n+1}$ with $a_{n+1}=0$,
we have
\begin{equation}\nonumber
\frac{\partial^a \sigma_i}{\partial z^a} (q_1)
=\left(\frac{\partial^a}{\partial z^a} e_{n+1-i}(u_1^m,\ldots, u_n^m)\right) (q_1)~.
\end{equation}
\item For $1\leq i\leq n$ and 
$a\in \mathbb{Z}_{\geq 0}^{n+1}$ such that $a_{n+1}\neq m$, we have
\begin{equation}\nonumber
\frac{\partial^a \sigma_i}{\partial z^a} (q_1)=0~.
\end{equation}
\end{enumerate}
\end{remark}

\subsubsection{$G(m,m,2)$}
A set of good basic invariants is given by
$$x_1=\frac{1}{m}\sigma_1,\quad x_2=\sigma_2.$$
The potential vector field
$\mathcal{G}^{\gamma}$ 
is given by 
$$\mathcal{G}^1=\dfrac{x_1^2}{2}+\dfrac{x_2^m}{m(m-1)},\quad
\mathcal{G}^2=x_1x_2.$$

The group $G(m,m, 2)$ is nothing but the Coxeter group $I_2(m)$. 
The representation matrix of the invariant metric $\langle ~,~\rangle$ in  $q_1,q_2$ is given by 
$$
\kappa \begin{bmatrix}0&1\\1&0\end{bmatrix}
$$
where $\kappa$ is a nonzero constant.
Therefore  if we take $\kappa=d_1=m$, then 
$$
\eta=\begin{bmatrix}0&1\\1&0\end{bmatrix}
$$
and 
we have the potential function
\begin{equation}\label{I2m}
\mathcal{F}=\frac{1}{2}x_1^2x_2+\frac{1}{m(m-1)(m+1)}x_2^{m+1}.
\end{equation}

\subsubsection{$G(m,m,3)$}
A set of good basic invariants is given by
$$\begin{cases}
x_1=-\frac{(\sqrt{2})^{2m}}{2m}\left(
\sigma_1-\frac{1}{4m}\sigma_2^2\right),\\
x_2=\frac{(\sqrt{2})^m}{2m}\sigma_2,\\
x_3=\frac{(\sqrt{2})^2}{\zeta_{2m}}\sigma_3
,
\end{cases}
$$
where $\zeta_{2m}$ is a primitive $2m$-th root of unity.
The potential vector field $\mathcal{G}^{\gamma}$ is given by
\begin{equation}\nonumber
\begin{split}
\mathcal{G}^1&=\frac{x_1^2}{2}+\frac{m-1}{12}x_2^4-
\frac{(\sqrt{2})^m}{2m}x_2 x_3^m,
\\
\mathcal{G}^2&=x_1x_2+\frac{m-2}{6}x_2^3+\frac{(\sqrt{2})^m}{2m(m-1)}x_3^m,
\\
\mathcal{G}^3&=x_1x_3-\frac{1}{2}x_2^2x_3.
\end{split}
\end{equation}
These agree with the result presented in \cite{Kato2017}.

For the case with $m=2$, $G(2,2,3)$ is nothing but the Coxeter group 
$A_3$. 
The representation matrix of the invariant metric $\langle~,~\rangle$ in the standard basis 
is a nonzero-constant multiple of the identity matrix.
If we  take  the constant to $d_1=4$,  we have
$$
\eta=\begin{bmatrix}0&1&0\\1&0&0\\0&0&1\end{bmatrix}~.
$$
The reason that this is not anti-diagonal is our convention \eqref{Gmmn-degree};
the degrees of  $x_1,x_2,x_3$ are $d_1=4,d_2=2,d_3=3$.
We have the potential function
$$
\mathcal{F}=\frac{1}{2}x_1^2x_2+\frac{1}{2}x_1x_3^2-\frac{1}{4}x_2^2x_3^2
+\frac{1}{60}x_2^5.
$$
If we interchange $x_2$ and $x_3$ 
and take the normalization of $x$ into account,
this agrees with eq.(2.46) in \cite{Dubrovin1998}.

\subsubsection{$G(m,m,4)$} 
A set of good basic invariants is given by
$$
\begin{cases}
x_1=\frac{(\sqrt{3})^{3m}}{3m}\left(
\sigma_1-\frac{1}{3m}\sigma_2\sigma_3+\frac{3m+1}{54m^2}\sigma_3^3\right),~\\
x_2=-\frac{(\sqrt{3})^{2m}}{3m}\left(\sigma_2-\frac{m+1}{6m}\sigma_3^2 \right),\\
x_3=\frac{(\sqrt{3})^m}{3m} \sigma_3,~\\
x_4=\frac{(\sqrt{3})^3}{\zeta^3_{3m}}\sigma_4,
\end{cases}$$
where $\zeta_{3m}$ is a primitive $3m$-th root of unity.
The potential vector field $\mathcal{G}^{\gamma}$ is given by
\begin{equation}\nonumber
\begin{split}
\mathcal{G}^1&=\frac{x_1^2}{2}+\frac{x_2^3}{6}+\frac{(m-1)}{2}x_2^2 x_3^2
+\frac{(m-1)(m-2)}{8}x_2 x_3^4+\frac{(m-1)(m^2-2m+2)}{60}x_3^6\\
&+\frac{(\sqrt{3})^m}{6}x_3^2x_4^m-\frac{(\sqrt{3})^m}{3m}x_2x_4^m,\\
\mathcal{G}^2&=x_1x_2+\frac{m(m-1)}{6}x_2 x_3^3+\frac{m-1}{2}x_2^2x_3
+\frac{(m-1)(m-2)(m+1)}{40}x_3^5-\frac{(\sqrt{3})^m}{3m}x_3 x_4^m,
\\
\mathcal{G}^3&=x_1x_3+\frac{x_2^2}{2}+\frac{m-2}{2}x_2x_3^2
+\frac{(m-2)(m-3)}{24}x_3^4+\frac{(\sqrt{3})^m}{3m(m-1)}x_4^m,
\\
\mathcal{G}^4&=x_1x_4-x_2x_3x_4+\frac{1}{3}x_3^3x_4.
\end{split}
\end{equation}

For the case with $m=2$, we have $G(2,2,4)=D_4$.
The representation matrix of the invariant metric $\langle~,~\rangle$ in the standard basis 
is a nonzero-constant multiple of the identity matrix.
If we  set  the constant to $d_1=6$,  we have
$$
\eta=\begin{bmatrix}0&0&1&0\\0&1&0&0\\1&0&0&0\\0&0&0&1\end{bmatrix}
$$
and  we have the potential function
$$\mathcal{F}=
\frac{1}{2}x_1^2x_3+\frac{1}{2}x_1x_2^2+\frac{1}{2}x_1x_4^2
+\frac{1}{6}x_2^2x_3^3
+\frac{1}{6}x_2^3x_3+\frac{1}{210}x_3^7
-\frac{1}{2}x_2x_3x_4^2
+\frac{1}{6}x_3^3 x_4^2~.
$$
\subsection{$G(m,1,n)$ for $m\geq 2$}
\label{sec:m1n}
Let $\bar{r}_{i}$ ($i=1,\ldots, n-1$) be the $n\times n$ matrix 
obtained from the identity matrix $I_n$ by exchanging its $i$-th 
and $(i+1)$-th rows. Let $\zeta_{nm}$ be a primitive $nm$-th root of unity
and let $\bar{t}:=\mathrm{diag}[\zeta_{nm}^n,1,\ldots,1]$.
Then the group $G(m,1,n)$ is generated by $\bar{r}_1,\ldots, \bar{r}_{n-1},\bar{t}$.
Let $$
\bar{\sigma}_i:=e_{n+1-i}(u_1^m,\ldots, u_n^m)\quad  (i=1,\ldots,n). $$
Here $e_{i}$ denotes the $i$-th elementary symmetric polynomial.
Then $\bar{\sigma}_1,\ldots,\bar{\sigma}_n$
form a set of basic invariants of $G(m,1,n)$. Their degrees are given by 
$d_i=\deg \bar{\sigma}_i=(n+1-i)m$.
Let 
$$
\bar{g}:=\bar{r}_{n-1}\cdots \bar{r}_1\bar{t}=
\left[
\begin{array}{c|c}
0&I_{n-1}\\ \hline
\zeta_{nm}^n&0
\end{array}
\right].
$$
Then $\bar{g}$ is a $d_1$-regular element. Its eigenvalues are
$\lambda_i=\zeta_{nm}^{1+(i-1)m}$ ($i=1,\ldots,n$) 
and $\lambda_i$-eigenvectors are
$$
\bar{q}_i=\frac{1}{\sqrt{n}} \begin{bmatrix}
1\\ \lambda_i\\ \lambda_i^2\\ \vdots \\ \lambda_i^{n-1}
\end{bmatrix}.
$$
Then we have an admissible triplet $(\bar{g}, \zeta_{nm}, \bar{q}_1)$.
Let $z=(z_1,\ldots, z_n)$ be the dual coordinates of 
$\bar{q}_1,\ldots, \bar{q}_n$. Below, we list 
the set of good basic invariants compatible with $z$ at $\bar{q}_1$.  

When $m=2$, $G(2,1,n)$ is the finite Coxeter group $B_n$.
The representation matrix of the invariant metric 
$\langle ~,~\rangle$ with respect to the standard basis is a nonzero constant multiple of
the identity matrix. If we set the constant to $d_1=2n$,  we have
$$
\eta_{ij}
=\delta_{i+j,n+1}~.
$$

\subsubsection{$G(m,1,2)$}
A set of good basic invariants is given by
\begin{equation*}
\begin{cases}
x_1=\frac{(\sqrt{2})^{2m}}{-2m} 
\left(\bar{\sigma}_1-\frac{1}{4m}\bar{\sigma}_2^2 \right)\\
x_2=\frac{(\sqrt{2})^{m}}{2m} \bar{\sigma}_2
\end{cases}.
\end{equation*}
This agrees with the flat coordinates given in \cite[\S 7.5]{Arsie-Lorenzoni2016}, 
\cite{KMS2018}. The potential vector field $\mathcal{G}^{\gamma}$ is given by
$$\mathcal{G}^1=\frac{x_1^2}{2}+\frac{m-1}{12}x_2^4,
\quad
\mathcal{G}^2=x_1x_2+\frac{m-2}{6}x_2^3.$$ 
For the case with $m=2$, we have $G(2,1,2)=B_2$ and  the potential function is
$$\mathcal{F}=\frac{1}{2}x_1^2x_2+\frac{1}{60}x_2^5.$$
This agrees with \eqref{I2m} with $m=4$.  (Notice that $B_2$ coincides with $I_2(4)=G(4,4,2)$.)

\subsubsection{$G(m,1,3)$}
A set of good basic invariants is given by
\begin{equation*}
\begin{cases}
x_1=\frac{(\sqrt{3})^{3m}}{3m} 
\left(\bar{\sigma}_1-\frac{1}{3m}\bar{\sigma}_2\bar{\sigma}_3+\frac{3m+1}{54m^2}\bar{\sigma}_3^3 
\right)\\
x_2=\frac{(\sqrt{3})^{2m}}{-3m} 
\left(\bar{\sigma}_2-\frac{m+1}{6m}\bar{\sigma}_3^2\right)\\
x_3=\frac{(\sqrt{3})^{m}}{3m} 
\bar{\sigma}_3
\end{cases}.
\end{equation*}
This agrees with the flat coordinates given in \cite[\S 7.6]{Arsie-Lorenzoni2016}.
The potential vector field $\mathcal{G}^{\gamma}$ is given as follows.
\begin{eqnarray*}
\mathcal{G}^1&=&
\frac{x_1^2}{2}+\frac{x_2^3}{6}+\frac{m-1}{2}x_2^2x_3^2+
\frac{(m-1)(m-2)}{8}x_2x_3^4+\frac{(m-1)(m^2-2m+2)}{60}x_3^6,\\
\mathcal{G}^2&=&x_1x_2+\frac{m(m-1)}{6}x_2x_3^3+\frac{m-1}{2}x_2^2x_3+
\frac{(m-1)(m-2)(m+1)}{40}x_3^5,\\
\mathcal{G}^3&=&x_1x_3+\frac{x_2^2}{2}+\frac{m-2}{2}x_2x_3^2+\frac{(m-2)(m-3)}{24}x_3^4.
\end{eqnarray*}

For the case with $m=2$, we have $G(2,1,3)=B_3$
 and  the potential function is
$$\mathcal{F}=\frac{1}{2}x_1^2x_3+\frac{1}{2}x_1x_2^2+\frac{1}{6}x_2^2x_3^3
+\frac{1}{6}x_2^3x_3+\frac{1}{210}x_3^7.$$
This agrees with eq.(2.47) in \cite{Dubrovin1998}.

\subsubsection{$G(m,1,4)$}
A set of good basic invariants is given by
\begin{equation*}
\begin{cases}
x_1=\frac{(\sqrt{4})^{4m}}{-4m} 
\left(\bar{\sigma}_1-\frac{1}{4m}\bar{\sigma}_2\bar{\sigma}_4-\frac{1}{8m}\bar{\sigma}_3^2+
\frac{4m+1}{32m^2}\bar{\sigma}_3\bar{\sigma}_4^2 -\frac{32m^2+12m+1}{1536m^3}\bar{\sigma}_4^4
\right)\\
x_2=\frac{(\sqrt{4})^{3m}}{4m} 
\left(\bar{\sigma}_2-\frac{m+1}{4m}\bar{\sigma}_3\bar{\sigma}_4+\frac{(m+1)(5m+1)}{96m^2}\bar{\sigma}_4^3 
\right)\\
x_3=\frac{(\sqrt{4})^{2m}}{-4m} 
\left(\bar{\sigma}_3-\frac{2m+1}{8m}\bar{\sigma}_4^2\right)\\
x_4=\frac{(\sqrt{4})^{m}}{4m} 
\bar{\sigma}_4
\end{cases}.
\end{equation*}
The computation of the potential vector field in this case is left to the interested readers.

\begin{remark}
Observe that a set of good basic invariants of $G(m,1,2)$ is obtained 
from that of $G(m, m,3)$ by setting $\sigma_3$ to  zero, 
and the potential vector field for $G(m, 1,2)$ is obtained from that 
of $G(m, m,3)$ by setting $x_3$ to   zero.
Similar relation holds for $G(m, 1,3)$ and $G(m, m, 4)$.
In a subsequent paper \cite{KM2023}, we shall show that, for any $n$, 
a set of good basic invariants and the potential vector field
of $G(m,1,n)$ is obtained from those of $G(m, m, n+1)$  in the same way.
\end{remark}

\end{document}